\newtheorem{lemma}{Lemma}
\newtheorem{theorem}[lemma]{Theorem}
\newtheorem{remark}[lemma]{Remark}
\newtheorem{example}{Example}
\numberwithin{equation}{section}
\numberwithin{lemma}{section}
\newcommand{\N}{\mathbb{N}}    
\newcommand{\NN}{\mathbb{N}_0} 
\newcommand{\R}{\mathbb{R}}    
\newcommand{\nv}{\vec{n}}
\newcommand{\bo}{\mathcal{O}}
\newcommand{\Op}{\Omega^+}
\newcommand{\Om}{\Omega^-}
\newcommand{\be}{ \begin{equation} }
\newcommand{\ee}{ \end{equation} }
\newcommand{\odd}{\operatorname{odd}}
\newcommand{\ind}{\Lambda}
\begin{document}
		
\title[Sixth Order Compact Finite Difference Scheme for Poisson Interface Problem]{Sixth Order Compact Finite Difference Scheme for Poisson Interface Problem with Singular Sources}

\author{Qiwei Feng, Bin Han and Peter Minev}

\thanks{
Research supported in part by
Natural Sciences and Engineering Research Council (NSERC) of Canada}

\address{Department of Mathematical and Statistical Sciences,
University of Alberta, Edmonton,\quad Alberta, Canada T6G 2G1.
\quad {\tt qfeng@ualberta.ca }\qquad {\tt bhan@ualberta.ca}
\quad{\tt minev@ualberta.ca}
}

\begin{abstract}
Let $\Gamma$ be a smooth curve inside
a two-dimensional rectangular region $\Omega$.
In this paper, we consider the Poisson interface problem $-\nabla^2 u=f$ in $\Omega\setminus \Gamma$ with Dirichlet boundary condition such that $f$ is smooth in $\Omega\setminus \Gamma$ and the jump functions $[u]$ and $[\nabla u\cdot \nv]$ across $\Gamma$ are smooth along $\Gamma$.
This Poisson interface problem includes
the weak solution of $-\nabla^2 u=f+g\delta_\Gamma$ in $\Omega$ as a special case.
Because the source term $f$ is possibly discontinuous across the interface curve $\Gamma$ and contains a delta function singularity along the curve $\Gamma$,
both the solution $u$ of the Poisson interface problem and its flux $\nabla u\cdot \nv$ are often discontinuous across the interface. To solve the Poisson interface problem with singular sources, in this paper we propose a sixth order compact finite difference scheme on uniform Cartesian grids.
Our proposed compact finite difference scheme with explicitly given stencils extends the immersed interface method (IIM) to the highest possible accuracy order six for compact finite difference schemes on uniform Cartesian grids, but without the need to change coordinates into the local coordinates as in most papers on IIM in the literature.
Also in contrast with most published papers on IIM, we explicitly provide the formulas
for all involved stencils and therefore, our proposed scheme can be easily implemented and is of interest to practitioners dealing with Poisson interface problems.
Note that the curve $\Gamma$ splits
$\Omega$ into two disjoint subregions $\Op$ and $\Om$.
The coefficient matrix $A$ in the resulting linear system $Ax=b$, following from the proposed scheme, is independent of any source term $f$, jump condition $g\delta_\Gamma$, interface curve $\Gamma$ and Dirichlet boundary conditions, while only $b$ depends on these factors and is explicitly given, according to the configuration of the nine stencil points in $\Op$ or $\Om$.
The constant coefficient matrix $A$ facilitates the parallel implementation of the algorithm in case of a large size matrix and only requires the update of the right hand side vector $b$ for different Poisson interface problems. The scheme can be readily extended to three space dimensions as well.
Due to the flexibility and explicitness of the proposed scheme, it can be generalized to obtain the highest order compact finite difference scheme for non-uniform grids as well. Our numerical experiments confirm the sixth accuracy order of the proposed compact finite difference scheme on uniform meshes for the Poisson interface problems with various singular sources.
\end{abstract}

\keywords{Poisson interface equations, high order compact finite difference schemes, discontinuous and singular source terms, delta source functions along curves, piecewise smooth solutions}

\subjclass[2010]{65N06, 35J05, 76S05, 41A58}
\maketitle

\maketitle

\pagenumbering{arabic}

\section{Introduction and problem formulation}

Interface problems are common in many practical problems
such as modeling flows in composite materials, oil reservoir simulations, nuclear waste
disposal, and other flows in porous media \cite{HW97}. For example, in groundwater or oil reservoir modelling
the permeability of the porous medium can change drastically across the interfaces between
various geological layers and this can significantly affect the transport process
\cite{PE11}.  The coefficients of heterogeneous and anisotropic
diffusion problems may also be highly oscillatory, and may contain a wide range of
various spatial scales, so
 very fine meshes are required in any standard finite difference/element discretization in order to capture the small scale features.
Thus, speed and storage are two important criteria in choosing suitable
algorithms for solving such problems.

High jumps in the coefficient functions can cause severe
singularities in the exact solutions of the equations \cite{CaKi01,Kell75,KiKo06,Blumen85,Petz01,Petz02,Nic90,KCPK07,NS94,FGW73,Kell71,Kell72}.
In general, the solutions of such problems have limited smoothness and the error
analysis of their approximations by finite elements, \cite{Guermond}, and finite differences, \cite{Samarskii}, for problems with weak solutions could be used.
However, such error estimates show convergence rates that are lower than the observed in the computational practice
for interface problems. Thus, an accurate tailored approximation and an error analysis which takes into account the specificity of such problem
is  an important and challenging task.

One possible approach to the resolution of this issue is provided by the so called
immersed interface methods (IIM) proposed by LeVeque and  Li (see \cite{LiIto06,LeLi94,Li98,Li96} and the references therein).
The main idea behind this approach is to adjust the finite difference approximation of the differential operators in the vicinity of the interface
using Taylor expansions, so that the approximation order remains similar to the order of the approximation in the regions where no singularities are present, thus avoiding the
need of a local grid refinement.
The explicit-jump immersed interface method, introduced in \cite{WB00}, is based on the same idea, however, instead of modification of the discrete operators, it modifies explicitly
the right hand side of the problem, and derives a second order finite difference scheme for problems with discontinuous,
piecewise constant coefficients.  In fact this approach is quite similar to the famous immersed boundary method (IBM) of Peskin \cite{P02}.
Exploiting the idea of the IIM, in \cite{FLQ11} the authors construct a fourth order compact finite difference
method for the Helmholtz equation with discontinuous coefficients across straight vertical line interfaces.
\cite{CFL19} derives a compact finite difference method for piecewise smooth coefficients, so that the solution and its gradient can both achieve a second order of accuracy.
\cite{DFL20} considers anisotropic elliptic interface problems whose coefficient matrix is symmetric semi-positive definite and derives a hybrid discretization involving finite elements away
of the interfaces, and an immersed interface finite difference approximation near or at the interfaces.  The error in the maximum norm is order $O(h^2\log \frac{1}{h})$.
In addition to the treatment of interface problems, Taylor expansions can be used to derive high order compact finite difference schemes for regular elliptic problems.
A family of fourth and sixth order compact finite difference methods for the three-dimensional Poisson equation are derived in
\cite{ZFH13}. \cite{SDKS13} concludes that the highest order for a compact finite difference method for the two-dimensional Poisson's equation on uniform grids is sixth.

Another source of singularity in the solution of elliptic problems is the presence of singularities in the source term.
One possible regularization of Dirac delta functions is analyzed in \cite{TE04}, and \cite{ZZFW06} introduces the
high order matched interface and boundary (MIB) method for  elliptic equations with singular sources.
Similarly, the finite difference discretization of such problems are considered by \cite{Towers10}.
In \cite{BG15} the idea of the IBM is combined with a Discontinuous Galerkin (DG) spatial discretization to derive a high-order method for elliptic problems
with discontinuous coefficients and singular sources.
In \cite{JCL18}, the authors combine the idea of the IIM with a continuous finite element discretization to derive a high order finite element
method for elliptic problems with jumps in the solution and its flux across smooth interfaces.
Elliptic problems with point-located Dirac delta source terms are considered in \cite{EG17}.  A second order approximation to the singular source is combined with
a second order finite difference approximation of the operator on Cartesian grids with hanging nodes, that allow for local refinements around the singular points.
Another finite difference version of the immersed interface method is used to solve the heat diffusion with singular sources in \cite{KV07}.

Singular solutions, induced by discontinuous coefficients of singular sources can also be approximated using a continuous finite element approximation,
 by enriching the basis with  singular functions located in the proper spatial locations, as considered in \cite{CaKi01, Kell75, KiKo06, KCPK07, FGW73, BAGMO96, GO94}.
Alternatively, a posteriori error estimates can be used to devise grid refinement algorithms, as demonstrated for example in \cite{Petz02}, where such estimates are provided in case of interface problems with discontinuous coefficients. 

This short literature review is far from being complete, however, one can conclude that there are not many available discretizations of elliptic problems with singularities, that can achieve higher-than-second order
of accuracy, and not rely on a local grid refinement.  It is particularly challenging to derive a compact finite difference scheme that can achieve the highest possible order of accuracy i.e. a compact scheme of order six.
The derivation of such a scheme is the main goal of the present paper.
In particular, we consider Poisson's equations with discontinuous or singular source terms, such that the solution and the normal component of gradient can be discontinuous across an interface curve.

To fix the ideas, let $\Omega=(l_1, l_2)\times(l_3, l_4)$ be a two-dimensional rectangular region.
Let also $\psi$ be a smooth two-dimensional function and consider a smooth curve $\Gamma:=\{(x,y)\in \Omega: \psi(x,y)=0\}$, which partitions $\Omega$ into two subregions:
$\Op:=\{(x,y)\in \Omega\; :\; \psi(x,y)>0\}$ and $\Om:=\{(x,y)\in \Omega\; : \; \psi(x,y)<0\}$.
We define $f_{\pm}:=f \chi_{\Omega_{\pm}}$ and $u_{\pm}:=u \chi_{\Omega_{\pm}}$.
The particular examples for $\psi(x,y)=x^2+y^2-2$ and $\psi(x,y)=y-\cos(x)$ are illustrated
in \cref{fig:figure0}.
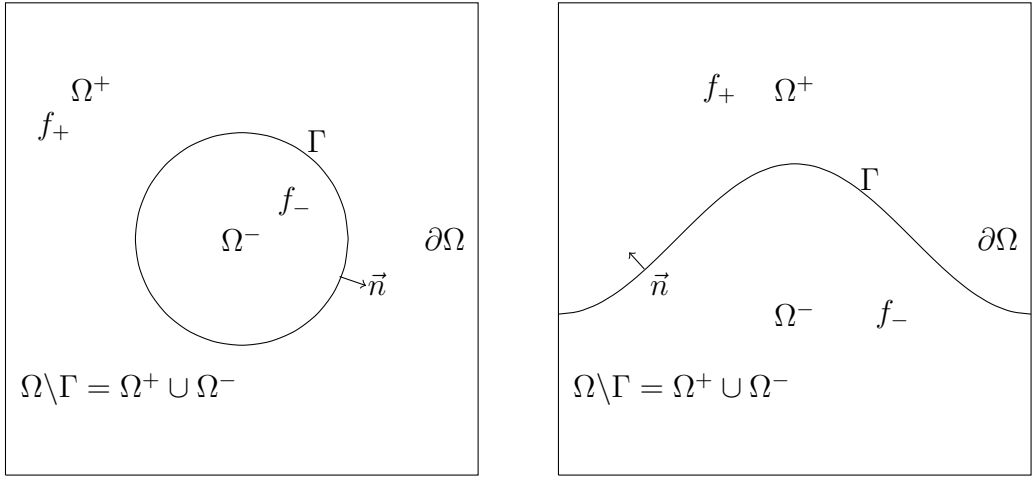
\begin{figure}[htbp]
	\hspace{6mm}
	\vspace{-0.3mm}
	\centering	
\begin{subfigure}[b]{0.4\textwidth}
\begin{tikzpicture}
	\draw[domain =0:360,smooth]
	 plot({sqrt(2)*cos(\x)}, {sqrt(2)*sin(\x)});
	\draw
(-pi, -pi) -- (-pi, pi) -- (pi, pi) -- (pi, -pi) --(-pi,-pi);
	\node (A) at (0,0) {$\Omega^{-}$};
	\node (A) at (-2,2) {$\Omega^{+}$};
	\node (A) at (0.7,0.5) {$f_{-}$};
    \node (A) at (-2.5,1.5) {$f_{+}$}; 	
	\node (A) at (1,1.3) {$\Gamma$}; 	
	\node (A) at (-1.5,-2) {$\Omega\backslash \Gamma={\Omega}^{+} \cup {\Omega}^{-}$};
	\node (A) at (2.7,0) {$\partial\Omega$};
	\node (A) at (1.8,-0.6) {$\nv$};
	\draw[->] (1.3,-0.5)--(1.65,-0.62);
\end{tikzpicture}
\end{subfigure}
\begin{subfigure}[b]{0.4\textwidth}
	\begin{tikzpicture}
	\draw[domain=-pi:pi,smooth] plot(\x,{cos(\x r)});
	\draw
(-pi, -pi) -- (-pi, pi) -- (pi, pi) -- (pi, -pi) --(-pi,-pi);	
    \node (A) at (0,-1) {$\Omega^{-}$};
	\node (A) at (0,2) {$\Omega^{+}$};
	\node (A) at (1.3,-1) {$f_{-}$};
	\node (A) at (-1,2) {$f_{+}$}; 	
	\node (A) at (1,0.8) {$\Gamma$}; 	
	\node (A) at (-1.5,-2) {$\Omega\backslash \Gamma={\Omega}^{+} \cup {\Omega}^{-}$};
	\node (A) at (2.7,0) {$\partial\Omega$};
	\node (A) at (-1.8,-0.6) {$\nv$};
	\draw[->] (-2,-0.4)--(-2.2,-0.18);
	\end{tikzpicture}
\end{subfigure}
	\caption
	{The problem region $\Omega=(-\pi,\pi)^2$ and
the two subregions $\Omega^+=\{(x,y)\in \Omega\; :\; \psi(x,y)>0\}$ and $\Omega^-=\{(x,y)\in \Omega\; :\; \psi(x,y)<0\}$ partitioned by the interface curve $\Gamma=\{(x,y)\in \Omega\; :\; \psi(x,y)=0\}$ with the functions
$\psi(x,y)=x^2+y^2-2$ (left) and $\psi(x,y)=y-\cos(x)$ (right). Note that $\Omega\backslash \Gamma=\Omega^+\cup \Omega^-$.}
\label{fig:figure0}
\vspace{-3.9mm}
\end{figure}
We now state the Poisson interface problem with singular sources as follows:
\begin{equation} \label{Qeques1}
\begin{cases}
-\nabla ^2 u=f &\text{in $\Omega \setminus \Gamma$},\\	
u=g_0 &\text{on $\partial\Omega$},\\
\left[u\right]=g_1 &\text{on $\Gamma$},\\
\left[\nabla  u \cdot \nv \right]=g  &\text{on $\Gamma$},
\end{cases}
\end{equation}
which, if $g_1=0$, can be equivalently rewritten as
\begin{equation} \label{Qeques2}
\begin{cases}
-\nabla^2 u=f-g\delta_{\Gamma}
&\text{in $\Omega$},\\
u=g_0 &\text{on $\partial\Omega$}.
\end{cases}
\end{equation}
Here $\nv$ is the unit normal vector of $\Gamma$ pointing towards $\Op$, and for a point $(x_0,y_0)\in \Gamma$,
\begin{align}
&\hspace{12mm}[u](x_0,y_0):=\lim_{(x,y)\in \Op, (x,y) \to (x_0,y_0) }u(x,y)- \lim_{(x,y)\in \Om, (x,y) \to (x_0,y_0) }u(x,y),\label{jumpCD1}\\
&[ \nabla  u \cdot \nv](x_0,y_0):=  \lim_{(x,y)\in \Op, (x,y) \to (x_0,y_0) } \nabla  u(x,y) \cdot \nv- \lim_{(x,y)\in \Om, (x,y) \to (x_0,y_0) } \nabla  u(x,y) \cdot \nv. \label{jumpCD2}
\end{align}
The conditions in \eqref{jumpCD1} and \eqref{jumpCD2} are called jump conditions for interface problems.

The Poisson interface problem in \eqref{Qeques1} with singular sources arise in many applications. In chemical reaction-diffusion processes, the solution $u$ represents the chemical concentration \cite{KV07,CY93}.
In case of catalytic reactions the catalyst is usually distributed in a very thin layer over an interface $\Gamma$, and therefore the reaction can be considered as occurring on a $d-1$-dimensional manifold in a $d$-dimensional space.  Such reactions result in a continuous chemical concentration $u$,
but a discontinuous gradient $\nabla  u$ across the interface $\Gamma$, i.e., $g_1(x,y)=0$ and $g(x,y)\ne 0$ for all $(x,y)\in \Gamma$.

Problems with discontinuous solutions and/or discontinuous fluxes appear also in multicomponent incompressible flows with or without interfacial tension.  As discussed by \cite{LeLi97},
if surface tension is present at the interface the incompressibility constraint, applied to the momentum equation yields a pressure Poisson equation with a dipole source (the gradient of
the delta function representing the interfacial tension alongside the fluid-fluid interface).  Since such a source function is difficult to approximate, its effect can be modeled via interface jump
conditions for the pressure and its gradient.  The solution for the velocity is always continuous across the interface, however, If the viscosities of the fluids on both sides of the interface differ,
its flux has a jump there.  So, both the velocity and the pressure can be subject to elliptic problems with jumps of the solution or its flux across fluid-fluid or fluid-elastic-structure interfaces.

Similar jumps appear in the solution for the pressure $u$ and velocity $ \vec{v}$, of the set of the Darcy's and continuity equations:
\begin{align}
& \vec{v} =-\frac{k}{\mu}\nabla u, \\
& \nabla \cdot \vec{v} = f,
\end{align}
in case of a discontinuous mobility $\frac{k}{\mu}$ and/or singular source $f$:


In this paper we consider the Poisson interface problem  in \eqref{Qeques1} under the following assumptions:

\begin{itemize}
\item $f$ is smooth in each of $\Op$ and $\Om$, and $f$ may be discontinuous across the interface curve $\Gamma$.

\item All functions $\psi$, $g$, $g_0$  and $g_1$ are smooth.

\item The exact solution $u$ is piecewise smooth in the sense that $u$ has uniformly continuous partial derivatives of (total) order up to seven in each of the subregions $\Op$ and $\Om$.
\end{itemize}

The remainder of the paper is organized as follows. In \cref{sec:sixord}, we  derive the sixth order compact finite difference scheme at regular and irregular points, and discuss their consistency  in \cref{thm:regular}.
and  \cref{fluxtm2}, correspondingly.
Here the center of a stencil is called a regular point if it,  together with all other eight points in the stencil are completely inside $\Op$ or are completely outside $\Op$. Otherwise, it is called an irregular point. We also give a simple proof for the maximum order of compact schemes at regular points. In \cref{thm:interface} we provide an expression for the jump of certain derivatives of the solution, due to the interface conditions.
In 2D there are $72$ different configurations for the stencil, depending on how the interface curve partitions the nine points in it. Up to a symmetry and rigid motion, all configurations at an irregular point can be classified into nine typical cases, see \cref{fig:9Exam1,fig:9Exam2,fig:9Exam3} for a graphical representation of these configurations.
In \cref{sec:Numeri}, we provide numerical experiments to check the convergence rate measured in $l_2$ and $l_{\infty}$ norms. We test the numerical examples in four cases: (1) the exact solution is known and $\Gamma$ does not intersect $\partial \Omega$; (2) the exact solution is known and $\Gamma$ intersects $\partial \Omega$; (3) the exact solution is unknown and $\Gamma$ does not intersect $\partial \Omega$; and (4) the exact solution is unknown and $\Gamma$  intersects $\partial \Omega$. In \cref{sec:Conclu}, we summarize the main contributions of this paper. Finally, in \cref{sec:proof} we shall provide the detailed proof for \cref{thm:interface}, which plays a key role in our development of the compact stencils at irregular points in \cref{sec:sixord}.

\section{Stencils for sixth order compact finite difference schemes using uniform Cartesian grids}
\label{sec:sixord}

Recall that the problem domain is $\Omega=(l_1,l_2)\times (l_3,l_4)$.
Because we shall use uniform Cartesian meshes, we require that the longer side of $\Omega$ is a multiple of the shorter side of $\Omega$. Without loss of generality, we can assume $l_4-l_3=N_0 (l_2-l_1)$ for some positive integer $N_0$.
For any positive integer $N_1\in \mathcal{N}$, we define $N_2:=N_0 N_1$ and then the grid size is  $h:=(l_2-l_1)/N_1=(l_4-l_3)/N_2$.

Let $x_i=l_1+i h$ and $y_j=l_3+j h$ for $i=1,\ldots,N_1-1$ and $j=1,\ldots,N_2-1$.
Because in this paper we are only interested in compact finite difference schemes on uniform Cartesian grids, for a compact stencil centered at the center point $(x_i,y_j)$, the compact stencil involves nine points $(x_i+kh, y_j+lh)$ for $k,l\in \{-1,0,1\}$.
It is convenient to use a level set function $\psi$, which is a two-dimensional smooth function, to describe a given smooth interface curve $\Gamma$ through
\[
\Gamma:=\{(x,y)\in \Omega \; : \; \psi(x,y)=0\}.
\]
Then the interface curve $\Gamma$ splits the problem domain $\Omega$ into two subregions:
$\Op:=\{ (x,y)\in \Omega \; : \; \psi(x,y)>0\}$ and
$\Om:=\{ (x,y)\in \Omega \; : \; \psi(x,y)<0\}$.
Now the interface curve $\Gamma$ splits these nine points into two groups depending on whether these points lie inside $\Op$ or $\Om$.
If a grid point lies on the curve $\Gamma$, then the grid point lies on the boundaries of both $\Op$ and $\Om$. For simplicity we may assume that the grid point belongs to $\Op$ and we can use the interface conditions to handle such a grid point.
Therefore, we naturally define
\[
d_{i,j}^+:=\{(k,\ell) \; : \;
k,\ell\in \{-1,0,1\}, \psi(x_i+kh, y_j+\ell h)\ge 0\}
\]
and
\[
d_{i,j}^-:=\{(k,\ell) \; : \;
k,\ell\in \{-1,0,1\}, \psi(x_i+kh, y_j+\ell h)<0\}.
\]
That is, the interface curve $\Gamma$ splits the nine points in a compact stencil into two disjoint sets  $\{(x_{i+k}, y_{j+\ell})\; : \; (k,\ell)\in d_{i,j}^+\} \subseteq \Op$ and
$\{(x_{i+k}, y_{j+\ell})\; : \; (k,\ell)\in d_{i,j}^-\} \subseteq \Om$.
We say that a grid/center point $(x_i,y_j)$ is
\emph{a regular point} if  $d_{i,j}^+=\emptyset$ or $d_{i,j}^-=\emptyset$.
That is, the center point $(x_i,y_j)$ of a stencil is regular if all its nine points are completely inside $\Op$ (hence $d_{i,j}^-=\emptyset$) or inside $\Om$ (i.e., $d_{i,j}^+=\emptyset$).
See \cref{fig:stenc1} for an example of regular points.
Otherwise, the center point $(x_i,y_j)$ of a stencil is called \emph{an irregular point}
if $d_{i,j}^+\ne \emptyset$ and $d_{i,j}^-\ne \emptyset$. That is, the interface curve $\Gamma$ splits the nine points into two disjoint nonempty sets. As explained before,
up to a symmetry and rigid motion, all the compact stencils at an irregular point can be classified into nine typical cases, see \cref{fig:9Exam1,fig:9Exam2,fig:9Exam3} for these nine typical cases.

\begin{figure}[htbp]
	\centering
	\begin{tikzpicture}[scale = 3]
		\draw[help lines,step = 0.25]
		(-2,-2) grid (0,0);
\draw[line width=2pt, domain=180:270,smooth, variable=\x, red] plot ({sqrt(1.5)*cos(\x)}, {sqrt(1.5)*sin(\x)});
\node at (-1.25-0.25,-1.25-0.25)[circle,fill,inner sep=3pt,color=black]{};
\node at (-1.25-0.25,-1.25)[circle,fill,inner sep=3pt,color=black]{};
\node at (-1.25-0.25,-1.25+0.25)[circle,fill,inner sep=3pt,color=black]{};
\node at (-1.25,-1.25-0.25)[circle,fill,inner sep=3pt,color=black]{};
\node at (-1.25,-1.25)[circle,fill,inner sep=3pt,color=black]{};
\node at (-1.25,-1.25+0.25)[circle,fill,inner sep=3pt,color=black]{};
\node at (-1.25+0.25,-1.25-0.25)[circle,fill,inner sep=3pt,color=black]{};
\node at (-1.25+0.25,-1.25)[circle,fill,inner sep=3pt,color=black]{};
\node at (-1.25+0.25,-1.25+0.25)[circle,fill,inner sep=3pt,color=black]{};
\node at (-0.5-0.25,-0.5-0.25)[circle,fill,inner sep=3pt,color=blue]{};
\node at (-0.5-0.25,-0.5)[circle,fill,inner sep=3pt,color=blue]{};
\node at (-0.5-0.25,-0.5+0.25)[circle,fill,inner sep=3pt,color=blue]{};
\node at (-0.5,-0.5-0.25)[circle,fill,inner sep=3pt,color=blue]{};
\node at (-0.5,-0.5)[circle,fill,inner sep=3pt,color=blue]{};
\node at (-0.5,-0.5+0.25)[circle,fill,inner sep=3pt,color=blue]{};
\node at (-0.5+0.25,-0.5-0.25)[circle,fill,inner sep=3pt,color=blue]{};
\node at (-0.5+0.25,-0.5)[circle,fill,inner sep=3pt,color=blue]{};
\node at (-0.5+0.25,-0.5+0.25)[circle,fill,inner sep=3pt,color=blue]{};
	\end{tikzpicture}
	\caption
	{An example of regular points. The curve in red color is the interface curve $\Gamma$.}
\label{fig:stenc1}
\end{figure}
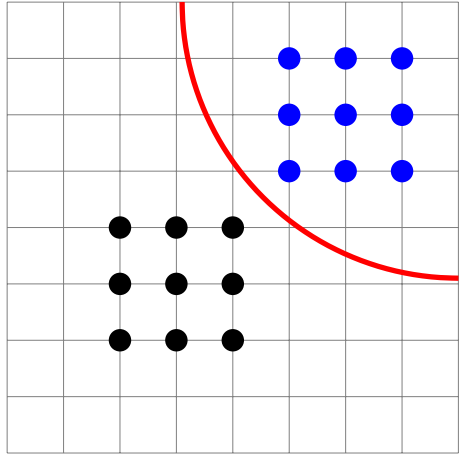

\begin{figure}[h]
	\centering
	\hspace{12mm}	
	 \begin{subfigure}[b]{0.3\textwidth}
	\begin{tikzpicture}[scale = 1]
	\draw[help lines,step = 1]
	(0,0) grid (4,4);
\node at (1,1)[circle,fill,inner sep=1.5pt,color=black]{};
\node at (1,2)[circle,fill,inner sep=1.5pt,color=black]{};
\node at (1,3)[circle,fill,inner sep=1.5pt,color=black]{};	
\node at (2,1)[circle,fill,inner sep=1.5pt,color=black]{};
\node at (2,2)[circle,fill,inner sep=1.5pt,color=black]{};
\node at (2,3)[circle,fill,inner sep=1.5pt,color=black]{};	
\node at (3,1)[circle,fill,inner sep=1.5pt,color=black]{};
\node at (3,2)[circle,fill,inner sep=1.5pt,color=black]{};
\node at (3,3)[circle,fill,inner sep=1.5pt,color=black]{};	
\draw[line width=1.5pt, red]  plot [smooth,tension=0.8]
coordinates {(0,1.8) (1,2.5) (1.4,4)};
    \end{tikzpicture}
	\end{subfigure}
\begin{subfigure}[b]{0.3\textwidth}
	\begin{tikzpicture}[scale = 1]
	\draw[help lines,step = 1]
	(0,0) grid (4,4);
	\node at (1,1)[circle,fill,inner sep=1.5pt,color=black]{};
	\node at (1,2)[circle,fill,inner sep=1.5pt,color=black]{};
	\node at (1,3)[circle,fill,inner sep=1.5pt,color=black]{};	
	\node at (2,1)[circle,fill,inner sep=1.5pt,color=black]{};
	\node at (2,2)[circle,fill,inner sep=1.5pt,color=black]{};
	\node at (2,3)[circle,fill,inner sep=1.5pt,color=black]{};	
	\node at (3,1)[circle,fill,inner sep=1.5pt,color=black]{};
	\node at (3,2)[circle,fill,inner sep=1.5pt,color=black]{};
	\node at (3,3)[circle,fill,inner sep=1.5pt,color=black]{};	
	\draw[line width=1.5pt, red] (1.2,4) parabola bend (2,2.5) (2.8,4);
\end{tikzpicture}
\end{subfigure}
\begin{subfigure}[b]{0.3\textwidth}
	\begin{tikzpicture}[scale = 1]
	\draw[help lines,step = 1]
	(0,0) grid (4,4);
	\node at (1,1)[circle,fill,inner sep=1.5pt,color=black]{};
	\node at (1,2)[circle,fill,inner sep=1.5pt,color=black]{};
	\node at (1,3)[circle,fill,inner sep=1.5pt,color=black]{};	
	\node at (2,1)[circle,fill,inner sep=1.5pt,color=black]{};
	\node at (2,2)[circle,fill,inner sep=1.5pt,color=black]{};
	\node at (2,3)[circle,fill,inner sep=1.5pt,color=black]{};	
	\node at (3,1)[circle,fill,inner sep=1.5pt,color=black]{};
	\node at (3,2)[circle,fill,inner sep=1.5pt,color=black]{};
	\node at (3,3)[circle,fill,inner sep=1.5pt,color=black]{};	
	\draw[line width=1.5pt, red] (0,2.1) .. controls (1,2.3) and (2,2.3) ..(3.37,4);
\end{tikzpicture}
\end{subfigure}	
	\caption
{Examples for irregular points. The curve in red color is the interface curve $\Gamma$.}
\label{fig:9Exam1}
\end{figure}
\begin{figure}[h]
	\centering
	\hspace{12mm}	
	 \begin{subfigure}[b]{0.3\textwidth}
		\begin{tikzpicture}[scale = 1]
			\draw[help lines,step = 1]
			(0,0) grid (4,4);
			\node at (1,1)[circle,fill,inner sep=1.5pt,color=black]{};
			\node at (1,2)[circle,fill,inner sep=1.5pt,color=black]{};
			\node at (1,3)[circle,fill,inner sep=1.5pt,color=black]{};	
			\node at (2,1)[circle,fill,inner sep=1.5pt,color=black]{};
			\node at (2,2)[circle,fill,inner sep=1.5pt,color=black]{};
			\node at (2,3)[circle,fill,inner sep=1.5pt,color=black]{};	
			\node at (3,1)[circle,fill,inner sep=1.5pt,color=black]{};
			\node at (3,2)[circle,fill,inner sep=1.5pt,color=black]{};
			\node at (3,3)[circle,fill,inner sep=1.5pt,color=black]{};	
			\draw[line width=1.5pt, red]  plot [smooth,tension=0.8]
			coordinates {(0,0.2) (1,1.4) (1.8,4)};
		\end{tikzpicture}
	\end{subfigure}
	 \begin{subfigure}[b]{0.3\textwidth}
		\begin{tikzpicture}[scale = 1]
			\draw[help lines,step = 1]
			(0,0) grid (4,4);
			\node at (1,1)[circle,fill,inner sep=1.5pt,color=black]{};
			\node at (1,2)[circle,fill,inner sep=1.5pt,color=black]{};
			\node at (1,3)[circle,fill,inner sep=1.5pt,color=black]{};	
			\node at (2,1)[circle,fill,inner sep=1.5pt,color=black]{};
			\node at (2,2)[circle,fill,inner sep=1.5pt,color=black]{};
			\node at (2,3)[circle,fill,inner sep=1.5pt,color=black]{};	
			\node at (3,1)[circle,fill,inner sep=1.5pt,color=black]{};
			\node at (3,2)[circle,fill,inner sep=1.5pt,color=black]{};
			\node at (3,3)[circle,fill,inner sep=1.5pt,color=black]{};	
			\draw[line width=1.5pt, red]  plot [smooth,tension=0.8]
			coordinates {(0,1.1) (1,1.4) (2,2.4) (2.7,4)};
		\end{tikzpicture}
	\end{subfigure}
	 \begin{subfigure}[b]{0.3\textwidth}
		\begin{tikzpicture}[scale = 1]
			\draw[help lines,step = 1]
			(0,0) grid (4,4);
			\node at (1,1)[circle,fill,inner sep=1.5pt,color=black]{};
			\node at (1,2)[circle,fill,inner sep=1.5pt,color=black]{};
			\node at (1,3)[circle,fill,inner sep=1.5pt,color=black]{};	
			\node at (2,1)[circle,fill,inner sep=1.5pt,color=black]{};
			\node at (2,2)[circle,fill,inner sep=1.5pt,color=black]{};
			\node at (2,3)[circle,fill,inner sep=1.5pt,color=black]{};	
			\node at (3,1)[circle,fill,inner sep=1.5pt,color=black]{};
			\node at (3,2)[circle,fill,inner sep=1.5pt,color=black]{};
			\node at (3,3)[circle,fill,inner sep=1.5pt,color=black]{};	
			\draw[line width=1.5pt, red]  plot [smooth,tension=0.8]
			coordinates {(0.8,0) (1.4,1.4) (1.7,4)};
		\end{tikzpicture}
	\end{subfigure}	
	\caption
{Examples for irregular points. The curve in red color is the interface curve $\Gamma$.}
\label{fig:9Exam2}
\end{figure}
\begin{figure}[h]
	\centering
	\hspace{12mm}	
	 \begin{subfigure}[b]{0.3\textwidth}
		\begin{tikzpicture}[scale = 1]
			\draw[help lines,step = 1]
			(0,0) grid (4,4);
			\node at (1,1)[circle,fill,inner sep=1.5pt,color=black]{};
			\node at (1,2)[circle,fill,inner sep=1.5pt,color=black]{};
			\node at (1,3)[circle,fill,inner sep=1.5pt,color=black]{};	
			\node at (2,1)[circle,fill,inner sep=1.5pt,color=black]{};
			\node at (2,2)[circle,fill,inner sep=1.5pt,color=black]{};
			\node at (2,3)[circle,fill,inner sep=1.5pt,color=black]{};	
			\node at (3,1)[circle,fill,inner sep=1.5pt,color=black]{};
			\node at (3,2)[circle,fill,inner sep=1.5pt,color=black]{};
			\node at (3,3)[circle,fill,inner sep=1.5pt,color=black]{};	
			\draw[line width=1.5pt, red] (0.5,0) .. controls (1.8,1.7) and (2,2.1) .. (2.4,4);
		\end{tikzpicture}
	\end{subfigure}
	 \begin{subfigure}[b]{0.3\textwidth}
		\begin{tikzpicture}[scale = 1]
			\draw[help lines,step = 1]
			(0,0) grid (4,4);
			\node at (1,1)[circle,fill,inner sep=1.5pt,color=black]{};
			\node at (1,2)[circle,fill,inner sep=1.5pt,color=black]{};
			\node at (1,3)[circle,fill,inner sep=1.5pt,color=black]{};	
			\node at (2,1)[circle,fill,inner sep=1.5pt,color=black]{};
			\node at (2,2)[circle,fill,inner sep=1.5pt,color=black]{};
			\node at (2,3)[circle,fill,inner sep=1.5pt,color=black]{};	
			\node at (3,1)[circle,fill,inner sep=1.5pt,color=black]{};
			\node at (3,2)[circle,fill,inner sep=1.5pt,color=black]{};
			\node at (3,3)[circle,fill,inner sep=1.5pt,color=black]{};	
			\draw[line width=1.5pt, red] (0,1.55) .. controls (1,1.5) and (2,2.3) ..(4,3.3);
		\end{tikzpicture}
	\end{subfigure}
	 \begin{subfigure}[b]{0.3\textwidth}
		\begin{tikzpicture}[scale = 1]
			\draw[help lines,step = 1]
			(0,0) grid (4,4);
			\node at (1,1)[circle,fill,inner sep=1.5pt,color=black]{};
			\node at (1,2)[circle,fill,inner sep=1.5pt,color=black]{};
			\node at (1,3)[circle,fill,inner sep=1.5pt,color=black]{};	
			\node at (2,1)[circle,fill,inner sep=1.5pt,color=black]{};
			\node at (2,2)[circle,fill,inner sep=1.5pt,color=black]{};
			\node at (2,3)[circle,fill,inner sep=1.5pt,color=black]{};	
			\node at (3,1)[circle,fill,inner sep=1.5pt,color=black]{};
			\node at (3,2)[circle,fill,inner sep=1.5pt,color=black]{};
			\node at (3,3)[circle,fill,inner sep=1.5pt,color=black]{};	
		   	\draw[line width=1.5pt, red] (0,1.2) .. controls (2,1.3) and (2.5,1.4) ..(2.7,4);
		
		\end{tikzpicture}
	\end{subfigure}	
	\caption
	{Examples for irregular points. The curve in red color is the interface curve $\Gamma$.}
	\label{fig:9Exam3}
\end{figure}

Let $M$ be a positive integer standing for a given desired accuracy order.
Before we discuss the compact  schemes at a regular or an irregular point $(x_i,y_j)$, let us introduce some notations and then outline the main ideas for finding the stencils at a general grid point $(x_i,y_j)$, achieving a given accuracy order $M$.

We first pick up and fix a base point $(x_i^*,y_j^*)$ inside the open square $(x_i-h,x_i+h)\times (y_j-h,y_j+h)$.
That is, we can write
\be \label{base:pt}
x_i^*=x_i-v_0  \quad \mbox{and}\quad y_j^*=y_j-w_0  \quad \mbox{with}\quad
-h<v_0, w_0<h.
\ee
For the sake of brevity, we shall use the following notions:
\be \label{ufmn}
u^{(m,n)}:=\frac{\partial^{m+n} u}{ \partial^m x \partial^n y}(x_i^*,y_j^*)
 \quad\mbox{and}\quad
f^{(m,n)}:=\frac{\partial^{m+n} f}{ \partial^m x \partial^n y}(x_i^*,y_j^*),
\ee
which are just their $(m,n)$th partial derivatives at the base point $(x_i^*,y_j^*)$.
Define $\NN:=\N\cup\{0\}$, the set of all nonnegative integers.
For a nonnegative integer $K\in \NN$, we define
\be \label{Sk}
\ind_{K}:=\{(m,n-m) \; : \; n=0,\ldots,K
\; \mbox{ and }\; m=0,\ldots,n\}, \qquad K\in \NN.
\ee
For a smooth function $u$, its values $u(x+x_i^*,y+y_j^*)$ for small $x,y$ can be well approximated through its Taylor polynomial below:
\be \label{u:approx}
u(x+x_i^*,y+y_j^*)=
\sum_{(m,n)\in \ind_{M+1}} \frac{u^{(m,n)}}{m!n!}x^m y^{n}
+\bo(h^{M+2}), \qquad x, y \in (-2h,2h).
\ee
In other words, in a neighborhood of the base point $(x_i^*,y_j^*)$,
the function $u$ is well approximated and completely determined by the partial derivatives of $u$ of total degree less than $M+2$ at the base point $(x_i^*,y_j^*)$, i.e.,  by the unknown quantities $u^{(m,n)}, (m,n)\in \ind_{M+1}$.
We can similarly approximate $f(x+x_i^*,y+y_j^*)$ for small $x,y$.
%
%
For $x\in \R$, the floor function $\lfloor x\rfloor$ is defined to be the largest integer less than or equal to $x$.
For an integer $m$, we define
\[
\odd(m):=\begin{cases}
0, &\text{if $m$ is even},\\
1, &\text{if $m$ is odd}.
\end{cases}
\]
That is, $\odd(m)=m-2\lfloor m/2\rfloor$ and $\lfloor m/2\rfloor=\frac{m-\odd(m)}{2}$.
Because the function $u$ is a solution to the partial differential equation in \eqref{Qeques1}, we shall see that all the quantities $u^{(m,n)}, (m,n)\in \ind_{M+1}$ are not independent of each other. In fact, we have:

\begin{lemma}\label{lem:uderiv}
Let $u$ be a function satisfying $-\nabla ^2 u=f$ in $\Omega\setminus \Gamma$.
If a point $(x_i^*,y_j^*)\in \Omega\setminus \Gamma$, then
\be \label{uderiv:relation}
u^{(m,n)}=(-1)^{\lfloor\frac{m}{2}\rfloor}
u^{(\odd(m),n+m-\odd(m))}+
\sum_{\ell=1}^{\lfloor m/2\rfloor}
(-1)^{\ell} f^{(m-2\ell,n+2\ell-2)},\qquad \forall\; (m,n)\in \ind_{M+1}^2,
\ee
where the subsets $\ind_{M+1}^1$ and $\ind_{M+1}^2$ of $\ind_{M+1}$ are defined by
\be \label{Sk1}
\ind_{M+1}^2:=\ind_{M+1}\setminus \ind_{M+1}^1\quad \mbox{with}\quad
\ind_{M+1}^1:=\{(\ell,k-\ell) \; :   k=\ell,\ldots, M+1-\ell\; \mbox{and} \;\ell=0,1\; \}.
\ee
\end{lemma}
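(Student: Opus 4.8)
The plan is to eliminate $x$-derivatives two at a time by repeatedly differentiating the equation $-\nabla^2 u=f$. Since the base point $(x_i^*,y_j^*)$ lies in the open set $\Omega\setminus\Gamma=\Op\cup\Om$, the function $u$ (resp.\ $f$) has continuous partial derivatives of total order up to $M+1$ (resp.\ $M-1$) in a whole neighbourhood of $(x_i^*,y_j^*)$, so every identity below is a genuine identity between the numbers obtained by evaluating derivatives at the base point. Differentiating $-u_{xx}-u_{yy}=f$ by $\partial_x^{a}\partial_y^{b}$ and evaluating at $(x_i^*,y_j^*)$ yields the reduction identity
\[
u^{(a+2,b)}=-\,u^{(a,b+2)}-f^{(a,b)},
\]
valid for all $a,b\in\NN$ with $a+b+2\le M+1$. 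The key structural feature is that it leaves the total degree $a+b+2$ unchanged, trading two $x$-derivatives for two $y$-derivatives at the cost of one lower-order term in $f$; hence iterating it never leaves the range of available smoothness.

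Now fix $(m,n)\in\ind_{M+1}^2$. By the definition of $\ind_{M+1}^1$ in \eqref{Sk1}, membership in $\ind_{M+1}^2$ forces $m\ge 2$, so that $q:=\lfloor m/2\rfloor\ge 1$. I would prove by induction on $k\in\{1,\dots,q\}$ the intermediate formula
\[
u^{(m,n)}=(-1)^{k}\,u^{(m-2k,\,n+2k)}+\sum_{\ell=1}^{k}(-1)^{\ell}f^{(m-2\ell,\,n+2\ell-2)}.
\]
The case $k=1$ is exactly the reduction identity applied with $(a,b)=(m-2,n)$. For the step from $k$ to $k+1$ (only needed when $k\le q-1$), I apply the reduction identity to $u^{(m-2k,\,n+2k)}$; this is legitimate since $m-2k\ge m-2(q-1)=\odd(m)+2\ge 2$ and the total degree is still $m+n\le M+1$. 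This produces $-u^{(m-2(k+1),\,n+2(k+1))}-f^{(m-2(k+1),\,n+2k)}$; the new $f$-term is precisely the $\ell=k+1$ summand, and collecting signs gives the formula at level $k+1$.

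Finally, taking $k=q=\lfloor m/2\rfloor$ gives $m-2q=\odd(m)$ and $n+2q=n+m-\odd(m)$, which turns the intermediate formula into exactly \eqref{uderiv:relation}. To finish I would just record that each quantity on the right-hand side is well-defined and appears where claimed: the reduced derivative $u^{(\odd(m),\,n+m-\odd(m))}$ has first index $\le 1$ and total degree $m+n\le M+1$, hence lies in $\ind_{M+1}^1$; and each $f^{(m-2\ell,\,n+2\ell-2)}$ has nonnegative indices (using $1\le\ell\le\lfloor m/2\rfloor$ together with $m\ge 2$, $n\ge 0$) and total degree $m+n-2\le M-1$. I do not anticipate a genuine difficulty here: the argument is a single differentiation of the PDE followed by a routine induction, and the only thing that needs care is the index bookkeeping — verifying that the exponents remain nonnegative and within the prescribed index sets at every stage and that the alternating signs line up with the claimed powers of $-1$.
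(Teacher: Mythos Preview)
Your proof is correct and follows essentially the same approach as the paper: both derive the basic recursion $u^{(m,n)}=-f^{(m-2,n)}-u^{(m-2,n+2)}$ by differentiating the PDE and then apply it $\lfloor m/2\rfloor$ times. Your write-up is simply more explicit, spelling out the induction and the index bookkeeping that the paper leaves to the reader.
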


\begin{proof} By our assumption, we have $u_{xx}+u_{yy}=-f$ in $\Omega\setminus \Gamma$. Therefore, we obtain
\be \label{umn:relation}
u^{(m+2,n)}+u^{(m,n+2)}=-f^{(m,n)},\qquad
\forall\; m,n\in \NN.
\ee
Hence, for $(m,n)\in \ind_{M+1}^2$, we have $m\ge 2$ and
\[
u^{(m,n)}=-f^{(m-2,n)}-u^{(m-2,n+2)},\qquad (m,n)\in \ind_{M+1}^2.
\]
Then we can recursively apply the above identity $\frac{m-\odd(m)}{2}-1$ times to get \eqref{uderiv:relation}.
\end{proof}

For the convenience of the reader,
see \cref{fig:umn1} for an illustration
of the quantities $u^{(m,n)}, (m,n)\in \ind_{M+1}^1$ and
$(m,n)\in \ind_{M+1}^2$ in \cref{lem:uderiv} with $M=6$.

\begin{figure}[h]
	\centering
	\hspace{12mm}	
	\begin{tikzpicture}[scale = 0.8]
		\node (A) at (0,7) {$u^{(0,0)}$};
		\node (A) at (0,6) {$u^{(0,1)}$};
		\node (A) at (0,5) {$u^{(0,2)}$};
		\node (A) at (0,4) {$u^{(0,3)}$};
		\node (A) at (0,3) {$u^{(0,4)}$};
		\node (A) at (0,2) {$u^{(0,5)}$};
		\node (A) at (0,1) {$u^{(0,6)}$};
		\node (A) at (0,0) {$u^{(0,7)}$};
		\node (A) at (2,7) {$u^{(1,0)}$};
		\node (A) at (2,6) {$u^{(1,1)}$};
		\node (A) at (2,5) {$u^{(1,2)}$};
		\node (A) at (2,4) {$u^{(1,3)}$};
		\node (A) at (2,3) {$u^{(1,4)}$};
		\node (A) at (2,2) {$u^{(1,5)}$};
		\node (A) at (2,1) {$u^{(1,6)}$};	 
		\node (A) at (4,7) {$u^{(2,0)}$};
		\node (A) at (4,6) {$u^{(2,1)}$};
		\node (A) at (4,5) {$u^{(2,2)}$};
		\node (A) at (4,4) {$u^{(2,3)}$};
		\node (A) at (4,3) {$u^{(2,4)}$};
		\node (A) at (4,2) {$u^{(2,5)}$};
		\node (A) at (6,7) {$u^{(3,0)}$};
		\node (A) at (6,6) {$u^{(3,1)}$};
		\node (A) at (6,5) {$u^{(3,2)}$};
		\node (A) at (6,4) {$u^{(3,3)}$};
		\node (A) at (6,3) {$u^{(3,4)}$};
		\node (A) at (8,7) {$u^{(4,0)}$};
		\node (A) at (8,6) {$u^{(4,1)}$};
		\node (A) at (8,5) {$u^{(4,2)}$};
		\node (A) at (8,4) {$u^{(4,3)}$};
		\node (A) at (10,7) {$u^{(5,0)}$};
		\node (A) at (10,6) {$u^{(5,1)}$};
		\node (A) at (10,5) {$u^{(5,2)}$};
		\node (A) at (12,7) {$u^{(6,0)}$};
		\node (A) at (12,6) {$u^{(6,1)}$};
		\node (A) at (14,7) {$u^{(7,0)}$};      	 
		\draw[line width=1.5pt, red]  plot [tension=0.8]
		coordinates {(-0.6,7.5) (-0.6,-0.6) (2.6,0.6) (2.6,7.5) (-0.6,7.5)};	
		\draw[line width=1.5pt, blue]  plot [tension=0.8]
		coordinates {(3,7.5)  (3,1.0) (14.6,6.8) (14.6,7.5) (3,7.5)};
		 \draw[decorate,decoration={brace,mirror,amplitude=4mm},xshift=0pt,yshift=10pt,ultra thick] (-0.6,-1) -- node [black,midway,yshift=0.6cm]{} (2.7,-1);
    	\node (A) at (1.3,-1.5)	 {$\{u^{(m,n)}:(m,n)\in \ind_{M+1}^1\}$};
        \draw[decorate,decoration={brace,mirror,amplitude=4mm},xshift=0pt,yshift=10pt,ultra thick] (3,0.5) -- node [black,midway,yshift=0.6cm]{} (14.7,0.5);
       \node (A) at (9,0)	 {$\{u^{(m,n)}:(m,n)\in \ind_{M+1}^2\}$};
	\end{tikzpicture}
	\caption
	{Red trapezoid: $\{u^{(m,n)}:(m,n)\in \ind_{M+1}^1\}$ with $M=6$. Blue trapezoid: $\{u^{(m,n)}:(m,n)\in \ind_{M+1}^2\}$ with $M=6$. Note that $\ind_{M+1}=\ind_{M+1}^1 \cup\ind_{M+1}^2$.}
	\label{fig:umn1}
\end{figure}
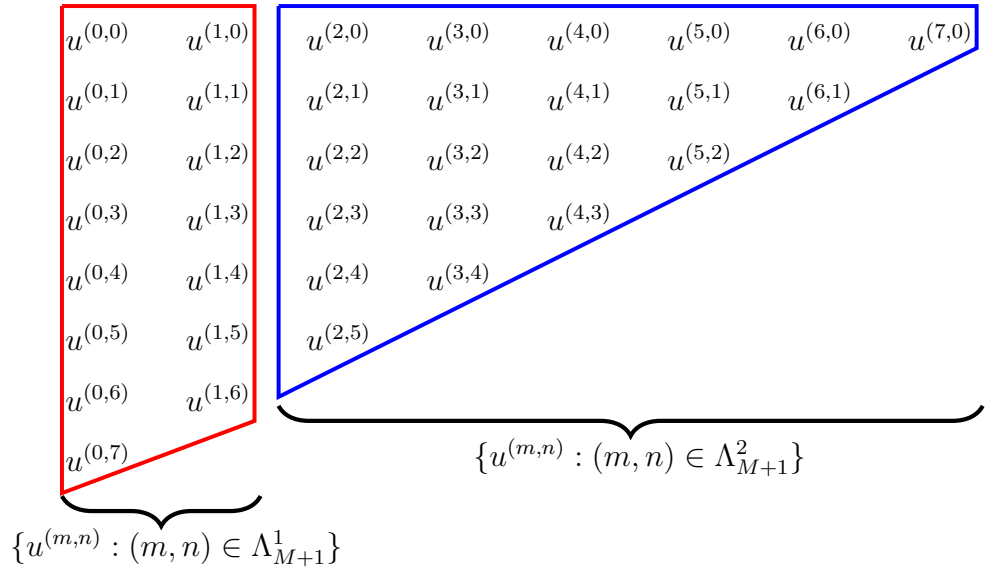

For $M=6$, the identities in \eqref{uderiv:relation} of \cref{lem:uderiv} for $u^{(m,n)}, (m,n)\in \ind_{7}^2$ can be explicitly given by
{\footnotesize{
\be \label{uderiv:M=6}
\begin{split}
    &u^{(2, 1)} = -f^{(0, 1)}-u^{(0, 3)},\quad
u^{(2, 2)} = -f^{(0, 2)}-u^{(0, 4)},\quad
u^{(2, 3)} = -f^{(0, 3)}-u^{(0, 5)},\quad
u^{(2, 4)} = -f^{(0, 4)}-u^{(0, 6)},\\
    & u^{(2, 5)} = -f^{(0, 5)}-u^{(0, 7)},\quad
u^{(3, 0)} = -f^{(1, 0)}-u^{(1, 2)},\quad
u^{(3, 1)} = -f^{(1, 1)}-u^{(1, 3)},\quad
u^{(3, 2)} = -f^{(1, 2)}-u^{(1, 4)},\\
		& u^{(3, 3)} = -f^{(1, 3)}-u^{(1, 5)},\quad  u^{(3, 4)} = -f^{(1, 4)}-u^{(1, 6)},\quad
u^{(4, 0)} = -f^{(2, 0)}+f^{(0, 2)}+u^{(0, 4)},\\
		& u^{(4, 1)} = -f^{(2, 1)}+f^{(0, 3)}+u^{(0, 5)},\quad
u^{(4, 2)} = -f^{(2, 2)}+f^{(0, 4)}+u^{(0, 6)},\quad u^{(4, 3)} = -f^{(2, 3)}+f^{(0, 5)}+u^{(0, 7)}, \\
		& u^{(5, 0)} = -f^{(3, 0)}+f^{(1, 2)}+u^{(1, 4)}, \quad
u^{(5, 1)} = -f^{(3, 1)}+f^{(1, 3)}+u^{(1, 5)}, \quad
u^{(5, 2)} = -f^{(3, 2)}+f^{(1, 4)}+u^{(1, 6)},\\
		& u^{(6, 0)} = -f^{(4, 0)}+f^{(2, 2)}-f^{(0, 4)}-u^{(0, 6)},\quad
u^{(6, 1)} = -f^{(4, 1)}+f^{(2, 3)}-f^{(0, 5)}-u^{(0, 7)},\\
		& u^{(7, 0)} = -f^{(5, 0)}+f^{(3, 2)}-f^{(1, 4)}-u^{(1, 6)}.
\end{split}
\ee
}}

Note that the cardinality of $\ind_{M+1}$ equals the sum of the cardinalities of $\ind_{M+1}^1$ and $\ind_{M-1}$.
The identities in \eqref{uderiv:relation} of \cref{lem:uderiv}
show that every
$u^{(m,n)}, (m,n)\in \ind_{M+1}$
can be written as a linear combination of
the quantities $u^{(m,n)}, (m,n)\in \ind_{M+1}^1$ and $f^{(m,n)}, (m,n)\in \ind_{M-1}$.
Conversely, by \eqref{umn:relation}, every $f^{(m,n)}, (m,n)\in \ind_{M-1}$ and every $u^{(m,n)}, (m,n)\in \ind_{M+1}^1$ can be trivially written as linear combinations of $u^{(m,n)}\in \ind_{M+1}$.
Because the source term $f$ is known, this can reduce the number of constraints on $u^{(m,n)}, (m,n)\in \ind_{M+1}$ for the function $u$ satisfying \eqref{Qeques1}.
Now using \eqref{uderiv:relation}, we can rewrite the approximation of $u(x+x_i^*,y+y_j^*)$ in \eqref{u:approx} as follows:
\begin{align*}
\sum_{(m,n)\in \ind_{M+1}} \frac{u^{(m,n)}}{m!n!} x^m y^{n}
&=\sum_{(m,n)\in \ind_{M+1}^1} \frac{u^{(m,n)}}{m!n!} x^m y^{n}+\sum_{(m,n)\in \ind_{M+1}^2} \frac{u^{(m,n)}}{m!n!} x^m y^{n}\\
 &=\sum_{(m,n)\in \ind_{M+1}^1}
u^{(m,n)} G_{m,n}(x,y) +\sum_{(m,n)\in \ind_{M-1}}
f^{(m,n)} H_{m,n}(x,y),
\end{align*}
where $G_{m,n}$ and $H_{m,n}$ are polynomials uniquely determined by the identities in \eqref{uderiv:relation}. Explicitly,
\be \label{Gmn}
G_{m,n}(x,y):=
\sum_{\ell=0}^{\lfloor \frac{n}{2}\rfloor}
(-1)^\ell \frac{x^{m+2\ell} y^{n-2\ell}}{(m+2\ell)!(n-2\ell)!},\qquad m\in \{0,1\}, n\in \NN
\ee
and
\be \label{Hmn}
H_{m,n}(x,y):=\sum_{\ell=1-\lfloor \frac{m}{2}\rfloor}^{1+\lfloor \frac{n}{2}\rfloor} \frac{(-1)^{\ell} x^{m+2\ell} y^{n-2\ell+2}}{(m+2\ell)!(n-2\ell+2)!},\qquad m,n\in \NN.
\ee
From \eqref{uderiv:relation} we observe that
$G_{m,n}$ is a homogeneous polynomial of total degree $m+n$ for all $(m,n)\in \ind_{M+1}^1$, while $H_{m,n}$ is a homogeneous polynomial of total degree $m+n+2$ for all $(m,n)\in \ind_{M-1}$.
For $M=6$, all the polynomials $G_{m,n}, (m,n)\in \ind_7^1$ and $H_{m,n},(m,n)\in \ind_5$ are explicitly given by
{\footnotesize{
	\be \label{Gxy}
	\begin{split}		
&G_{0, 0} = 1,\quad
G_{0, 1} = y,\quad
G_{0, 2} = \tfrac{1}{2}y^2-\tfrac{1}{2}x^2,\quad
G_{0, 3} = \tfrac{1}{6}y^3-\tfrac{1}{2}x^2y, \quad
G_{0, 4} = \tfrac{1}{24}y^4 +\tfrac{1}{24}x^4-\tfrac{1}{4}x^2y^2,\\
&G_{0, 5} = \tfrac{1}{120}y^5-\tfrac{1}{12}x^2y^3+\tfrac{1}{24}x^4y,\quad
G_{0, 6} = \tfrac{1}{720}y^6-\tfrac{1}{720}x^6 -\tfrac{1}{48}x^2y^4 +\tfrac{1}{48}x^4y^2,\quad\\
&G_{0, 7} = \tfrac{1}{5040}y^7+\tfrac{1}{144}x^4y^3-\tfrac{1}{720} x^6y
-\tfrac{1}{240}x^2y^5,\quad
G_{1, 0} = x, \quad
G_{1, 1} = xy,\quad
G_{1, 2} = \tfrac{1}{2}xy^2-\tfrac{1}{6}x^3,\\
&G_{1, 3} = \tfrac{1}{6}xy^3-\tfrac{1}{6}x^3y, \quad
G_{1, 4} = \tfrac{1}{24}xy^4+\tfrac{1}{120}x^5-\tfrac{1}{12}x^3y^2, \quad G_{1, 5} = \tfrac{1}{120}xy^5-\tfrac{1}{36}x^3y^3+\tfrac{1}{120} x^5y,\\
&G_{1, 6} = \tfrac{1}{720}xy^6-\tfrac{1}{5040}x^7+\tfrac{1}{240} x^5y^2-\tfrac{1}{144}x^3y^4,
\end{split}
\ee
}}
and
{\footnotesize{
	\be \label{Hxy}
	\begin{split}
&H_{0, 0} = -\tfrac{1}{2}x^2,\quad
H_{0, 1} = -\tfrac{1}{2}x^2y,\quad
H_{0, 2} = \tfrac{1}{24}x^4-\tfrac{1}{4}x^2y^2,\quad
H_{0, 3} = -\tfrac{1}{12}x^2y^3+\tfrac{1}{24}x^4y,\\
&H_{0, 4} = -\tfrac{1}{720}x^6-\tfrac{1}{48}x^2y^4+\tfrac{1}{48}
x^4y^2, \quad
H_{0, 5} = \tfrac{1}{144}x^4y^3-\tfrac{1}{720}x^6y
-\tfrac{1}{240}x^2y^5,\quad
H_{1, 0} = -\tfrac{1}{6}x^3,\quad  H_{1, 1} = -\tfrac{1}{6}x^3y,\\
&H_{1, 2} = \tfrac{1}{120}x^5-\frac{1}{12}x^3y^2, \quad
H_{1, 3} = \tfrac{1}{120}x^5y-\tfrac{1}{36}x^3y^3,\quad
H_{1, 4} = -\tfrac{1}{5040}x^7+\tfrac{1}{240}x^5y^2-\tfrac{1}{144}
x^3y^4,\quad
H_{2, 0}= -\tfrac{1}{24}x^4,\\
&H_{2, 1} = -\tfrac{1}{24}x^4y,\quad
H_{2, 2} = \tfrac{1}{720}x^6-\tfrac{1}{48}x^4y^2,\quad
H_{2, 3} = -\tfrac{1}{144}x^4y^3+\tfrac{1}{720}x^6y,\quad
H_{3, 0} = -\tfrac{1}{120}x^5,\\
&H_{3, 1} = -\tfrac{1}{120}x^5y,\quad
H_{3, 2} = \tfrac{1}{5040}x^7-\tfrac{1}{240}x^5y^2,\quad
H_{4, 0} = -\tfrac{1}{720}x^6,\quad  H_{4, 1} = -\tfrac{1}{720}x^6y,\quad  H_{5, 0} = -\tfrac{1}{5040}x^7.
\end{split}
\ee
}}
Hence, by \eqref{u:approx},
the solution $u$ to \eqref{Qeques1} near the base point $(x_i^*,y_j^*)$ can be approximated by
\be \label{u:approx:key}
u(x+x_i^*,y+y_j^*)
=
\sum_{(m,n)\in \ind_{M+1}^1}
u^{(m,n)} G_{m,n}(x,y) +\sum_{(m,n)\in \ind_{M-1}}
f^{(m,n)} H_{m,n}(x,y)+\bo(h^{M+2}),
\ee
for $x,y\in (-2h,2h)$. We shall use the above identity in \eqref{u:approx:key} for finding compact stencils achieving a desired accuracy order $M$.

\subsection{Regular points}

In this subsection, we discuss how to find a compact scheme centered at a regular point $(x_i,y_j)$, which has been well studied in the literature. The main purpose of this subsection is to outline the main ideas.
For simplicity, we just pick $(x_i,y_j)$ as the base point $(x_i^*,y_j^*)$, that is,
$(x_i^*,y_j^*)$ is defined in \eqref{base:pt} with $v_0=w_0=0$.
Recall that $M\in \N$ stands for the desired accuracy order.
For a compact stencil at a regular point $(x_i,y_j)$, we need to find stencil coefficients $\{C_{k,\ell}\}_{k,\ell=-1,0,1}$ and $\{C_{f,m,n}\}_{(m,n)\in \ind_{M-1}}$ satisfying
\be \label{stencil:regular1}
\sum_{k=-1}^1 \sum_{\ell=-1}^1
C_{k,\ell}(h) u(x_i+kh,y_j+\ell h)=
\sum_{(m,n)\in \ind_{M-1}} f^{(m,n)}C_{f,m,n}(h)
+\bo(h^{M+2}),\qquad h\to 0,
\ee
where $C_{k,\ell}$ and $C_{f,m,n}$ are to-be-determined polynomials of $h$ with degree less than $M+2$.
Plugging the approximation in \eqref{u:approx:key} into the above stencil in \eqref{stencil:regular1} to approximate $u(x_i+kh, y_j+\ell h)$,
the conditions in \eqref{stencil:regular1} become
\be \label{stencil:regular:2}
\sum_{(m,n)\in \ind_{M+1}^1}
u^{(m,n)} I_{m,n}(h)+
\sum_{(m,n)\in \ind_{M-1}} f^{(m,n)}
\left(J_{m,n}(h)-C_{f,m,n}(h)\right)
=\bo(h^{M+2}),
\ee
where
\be \label{IJ:regular}
I_{m,n}(h):=\sum_{k=-1}^1 \sum_{\ell=-1}^1 C_{k,\ell}(h) G_{m,n}(kh, \ell h),\quad
J_{m,n}(h):=\sum_{k=-1}^1 \sum_{\ell=-1}^1 C_{k,\ell}(h) H_{m,n}(kh, \ell h).
\ee
Since \eqref{stencil:regular:2} must hold for all unknowns $u^{(m,n)}, (m,n)\in \ind_{M+1}^1$ and $f^{(m,n)}, (m,n)\in \ind_{M-1}$, to find nontrivial stencil coefficients $C_{k,\ell}(h)$ for $k,\ell=-1,0,1$, solving \eqref{stencil:regular:2} is equivalent to solving
\be \label{stencil:regular:u}
I_{m,n}(h)=\bo(h^{M+2}) \quad h\to 0,\; \mbox{ for all }\; (m,n)\in \ind_{M+1}^1
\ee
and
\be \label{stencil:regular:f}
J_{m,n}(h)=C_{f,m,n}(h) +\bo(h^{M+2}),\qquad h\to 0, \; \mbox{ for all }\; (m,n)\in \ind_{M-1}.
\ee
Note that the solutions of
$\{C_{k,\ell}\}_{k,\ell=-1,0,1}$ and $\{C_{f,m,n}\}_{(m,n)\in \ind_{M-1}}$
to \eqref{stencil:regular:u} and \eqref{stencil:regular:f} are homogeneous in terms of its unknowns, that is, a solution multiplied with a given polynomial of $h$ to all coefficients is still a solution. Hence, we say that a solution for the coefficients in a compact stencil is nontrivial if $C_{k,\ell}(0)\ne 0$ for at least some $k,\ell=-1,0,1$.
Since $G_{m,n}$ is a homogeneous polynomial of degree $m+n$, we can write $G_{m,n}(kh,\ell h)=g_{m,n,k,\ell} h^{m+n}$ for some constants $g_{m,n,k,\ell}$.
Hence, \eqref{stencil:regular:u} becomes
\be \label{stencil:regular:u:2}
\sum_{k=-1}^1 \sum_{\ell=-1}^1 C_{k,\ell}(h) g_{m,n,k,\ell}=\bo(h^{M+2-m-n}),\quad h\to 0, \; \mbox{ for all }\; (m,n)\in \ind_{M+1}^1.
\ee
Because $M+2-m-n\ge 1$ for all $(m,n)\in \ind_{M+1}^1$, the identities in \eqref{stencil:regular:u:2} automatically imply
\be \label{stencil:regular:u:0}
\sum_{k=-1}^1 \sum_{\ell=-1}^1 C_{k,\ell}(0) g_{m,n,k,\ell}=0,\quad \mbox{ for all }\; (m,n)\in \ind_{M+1}^1.
\ee
By calculation, the maximum integer $M$ for the linear system in \eqref{stencil:regular:u:0} to have a nontrivial solution $\{C_{k,\ell}(0)\}_{k,\ell=-1,0,1}$ is $M=6$. More precisely, the rank of the matrix $(g_{m,n,k,\ell})_{(m,n)\in \ind_{M+1}^1, (k,\ell)\in \{-1,0,1\}^2}$ is nine for $M=7$ (hence \eqref{stencil:regular:u:0} has only the trivial solution for $M=7$) and its rank is $8$ for $M=6$. Therefore, for a compact stencil, the maximum accuracy order $M$ that we can achieve is $M=6$.
Moreover, up to a multiplicative constant, such a nontrivial solution $\{C_{k,\ell}(0)\}_{k,\ell=-1,0,1}$ to \eqref{stencil:regular:u:0} is uniquely given by
\be \label{C:M=6}
C_{0,0}=-20,\quad
C_{-1,0}=C_{1,0}=C_{0,-1}=C_{0,1}=4,\quad
C_{-1,-1}=C_{-1,1}=C_{1,-1}=C_{1,1}=1.
\ee
For a constant solution of $\{C_{k,\ell}(0)\}_{k,\ell=-1,0,1}$ satisfying \eqref{stencil:regular:u:0}, such a constant solution obviously satisfies also \eqref{stencil:regular:u:2} and therefore, it is a nontrivial solution to \eqref{stencil:regular:u}.

Since $H_{m,n}$ is a homogeneous polynomial of degree $m+n+2$, we can write $H_{m,n}(kh,\ell h)=h_{m,n,k,\ell} h^{m+n+2}$ for some constants $h_{m,n,k,\ell}$. Now plugging \eqref{C:M=6} into the definition of $J_{m,n}$,
we easily deduce from \eqref{stencil:regular:f} that all the coefficients $C_{f,m,n}$ satisfying \eqref{stencil:regular:f} are given by
\be \label{Cfmn:M=6}
C_{f,m,n}(h):=
\sum_{k=-1}^1 \sum_{\ell=-1}^1 C_{k,\ell}
H_{m,n}(h)=
\sum_{k=-1}^1 \sum_{\ell=-1}^1 C_{k,\ell} h_{m,n,k,\ell} h^{m+n+2}, \qquad (m,n)\in \ind_5.
\ee
Explicitly,
\[
C_{f,0,0}:=-6 h^2,\quad
C_{f,0,2}=C_{f,2,0}:=-\tfrac{1}{2} h^4,\quad
C_{f,0,4}=C_{f,4,0}:=-\tfrac{1}{60}h^6,
\quad C_{f,2,2}:=-\tfrac{1}{15}h^6
\]
and all other coefficients $C_{f,m,n}(h)=0$.
In summary, for a regular point $(x_i,y_j)$, we obtain the following theorem which is well known in the literature (e.g., see \cite{SDKS13,ZFH13,WZ09}).

\begin{theorem}\label{thm:regular}
Let a grid point $(x_i,y_j)$ be a regular point, i.e., either $d_{i,j}^+=\emptyset$ or $d_{i,j}^-=\emptyset$. Let $(u_{h})_{i,j}$ be the numerical approximated solution of the exact solution $u$ of the partial differential equation \eqref{Qeques1} at a regular point $(x_i, y_j)$. Then the scheme:
{\small{
\begin{equation}\label{stencil:regular}
	\begin{split}	 &(u_{h})_{i-1,j-1}+4(u_{h})_{i-1,j}+(u_{h})_{i-1,j+1}+4(u_{h})_{i,j-1}	 -20(u_{h})_{i,j}+4(u_{h})_{i,j+1}+(u_{h})_{i+1,j-1}+4(u_{h})_{i+1,j}\\
		&\qquad +(u_{h})_{i+1,j+1}
=-6h^2f^{(0, 0)}-\frac{1}{2}h^4(f^{(0, 2)}+f^{(2, 0)})-\frac{1}{60}h^6(f^{(0, 4)}+f^{(4, 0)})-\frac{1}{15}h^6f^{(2, 2)},
	\end{split}
\end{equation}
}}
achieves sixth order accuracy for $-\nabla ^2 u=f$, where $f^{(m,n)}:=\frac{\partial^{m+n} f}{ \partial^m x \partial^n y}(x_i,y_j)$.
Moreover,
\begin{enumerate}
\item[(i)] The compact finite difference scheme of order four can be  obtained from
\eqref{stencil:regular} by dropping the terms $\frac{1}{60}h^6(f^{(0, 4)}+f^{(4, 0)})$ and $\frac{1}{15}h^6f^{(2, 2)}$.
\item[(ii)] The maximum accuracy order $M$ for a compact finite difference scheme is $M=6$.
\end{enumerate}
\end{theorem}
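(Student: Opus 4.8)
The plan is to assemble the statement from the machinery set up in \eqref{stencil:regular1}--\eqref{Cfmn:M=6}; almost every ingredient is already in place, so the proof is mostly organization plus one genuine linear-algebra computation. First I would take the base point to be the grid point itself, $v_0=w_0=0$ in \eqref{base:pt}, so that $u^{(m,n)}$ and $f^{(m,n)}$ are the derivatives at $(x_i,y_j)$. Recall that producing a compact scheme of order $M$ at a regular point amounts to finding polynomials $C_{k,\ell}(h)$, $k,\ell\in\{-1,0,1\}$, and $C_{f,m,n}(h)$, $(m,n)\in\ind_{M-1}$, with \eqref{stencil:regular1} valid. By \cref{lem:uderiv} and the PDE-consistent expansion \eqref{u:approx:key}, substituting \eqref{u:approx:key} into \eqref{stencil:regular1} and using that the $u^{(m,n)}$, $(m,n)\in\ind_{M+1}^1$, and $f^{(m,n)}$, $(m,n)\in\ind_{M-1}$, are free, this is equivalent to the two systems \eqref{stencil:regular:u} and \eqref{stencil:regular:f} with $I_{m,n}$, $J_{m,n}$ as in \eqref{IJ:regular}. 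Since $G_{m,n}$ is homogeneous of degree $m+n$, \eqref{stencil:regular:u} is implied by the numerical linear system \eqref{stencil:regular:u:0} in the nine unknowns $\{C_{k,\ell}(0)\}$, any constant solution of which also satisfies \eqref{stencil:regular:u:2} hence \eqref{stencil:regular:u}; then $C_{f,m,n}(h)=\sum_{k,\ell}C_{k,\ell}H_{m,n}(kh,\ell h)$ is read off from \eqref{stencil:regular:f} using homogeneity of $H_{m,n}$. So everything reduces to: (a) the largest $M$ for which \eqref{stencil:regular:u:0} is nontrivially solvable; (b) the solution when $M=6$; (c) the induced right-hand side.

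For (a) and (b) --- this is the ``simple proof'' promised in the introduction --- the key point is that on the $3\times3$ stencil every polynomial in $(k,\ell)$ collapses to a combination of the nine monomials $k^a\ell^b$, $a,b\in\{0,1,2\}$, because $k^3=k$ and $k^4=k^2$ for $k\in\{-1,0,1\}$. Writing $G_{m,n}(kh,\ell h)=g_{m,n,k,\ell}h^{m+n}$ and regarding $(C_{k,\ell}(0))$ as a vector in $\R^9$, \eqref{stencil:regular:u:0} says $(C_{k,\ell}(0))$ is orthogonal to all vectors $(g_{m,n,k,\ell})_{k,\ell}$, $(m,n)\in\ind_{M+1}^1$. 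Using the explicit $G_{m,n}$ from \eqref{Gmn}--\eqref{Gxy}, I would verify that for $M=6$ the span of these vectors is exactly the eight-dimensional subspace generated by $1$, $k$, $\ell$, $k\ell$, $k^2-\ell^2$, $k^2\ell$, $k\ell^2$, $\tfrac{1}{24}(k^2+\ell^2)-\tfrac{1}{4}k^2\ell^2$, so that \eqref{stencil:regular:u:0} has a one-dimensional solution space; a short check --- orthogonality to the three even-in-$k$, even-in-$\ell$ generators $1$, $k^2-\ell^2$, $\tfrac{1}{24}(k^2+\ell^2)-\tfrac{1}{4}k^2\ell^2$, the rest being automatic by parity of $v$ --- shows it is spanned by \eqref{C:M=6}, giving (b) and the attainability of order six. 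For $M=7$ the new generator $G_{0,8}$ from \eqref{Gmn} has $g_{0,8,k,\ell}=\tfrac{1}{40320}(k^2+\ell^2)+\tfrac{1}{2880}k^2\ell^2$ on the nine stencil points, which is not a scalar multiple of $\tfrac{1}{24}(k^2+\ell^2)-\tfrac{1}{4}k^2\ell^2$; hence the span jumps to all of $\R^9$, \eqref{stencil:regular:u:0} forces $C_{k,\ell}(0)=0$, and no compact scheme of order seven exists --- this is part (ii).

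With $\{C_{k,\ell}\}$ fixed by \eqref{C:M=6}, step (c) is pure bookkeeping: by homogeneity of $H_{m,n}$ and \eqref{stencil:regular:f}, $C_{f,m,n}(h)=\sum_{k,\ell}C_{k,\ell}H_{m,n}(kh,\ell h)$ as in \eqref{Cfmn:M=6}, and evaluating with the explicit $H_{m,n}$ in \eqref{Hxy} leaves only $C_{f,0,0}=-6h^2$, $C_{f,2,0}=C_{f,0,2}=-\tfrac{1}{2}h^4$, $C_{f,4,0}=C_{f,0,4}=-\tfrac{1}{60}h^6$, $C_{f,2,2}=-\tfrac{1}{15}h^6$, all others zero, i.e.\ the right-hand side of \eqref{stencil:regular}. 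Together with \eqref{u:approx:key} this shows the exact solution of \eqref{Qeques1} satisfies \eqref{stencil:regular} up to $\bo(h^{M+2})=\bo(h^8)$, hence sixth order consistency after the natural division by $h^2$; the order is exactly six, because \eqref{C:M=6} is not orthogonal to $(g_{0,8,k,\ell})_{k,\ell}$ so the $h^8$ term survives for a generic $u$. For part (i), dropping the terms $-\tfrac{1}{60}h^6(f^{(0,4)}+f^{(4,0)})-\tfrac{1}{15}h^6 f^{(2,2)}$ is the same as imposing \eqref{stencil:regular:f} only for $(m,n)\in\ind_3$ (the case $M=4$), for which \eqref{C:M=6} still solves \eqref{stencil:regular:u:0}; the residual of \eqref{stencil:regular1} then equals exactly those dropped terms plus $\bo(h^8)$, i.e.\ $\bo(h^6)$ and not smaller for generic $f$, which is fourth order consistency after normalization.

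The only substantive step is the span/rank computation in (a)--(b): pinning down the eight-dimensional image of the $G_{m,n}$ on the stencil, reading off its one-dimensional orthogonal complement, and checking that the extra generator $G_{0,8}$ appearing at order seven lies outside that image. The rest --- the reduction through \eqref{u:approx:key}, the explicit right-hand side via \eqref{Hxy}, and the fourth-order corollary --- is substitution already performed in the text, so the write-up is largely a matter of citing \eqref{stencil:regular:u:0}, \eqref{C:M=6}, \eqref{Cfmn:M=6} and carrying out the evaluations.
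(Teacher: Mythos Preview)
Your proposal is correct and follows essentially the same path as the paper: take the base point at $(x_i,y_j)$, reduce via \eqref{u:approx:key} to the linear systems \eqref{stencil:regular:u:0} and \eqref{stencil:regular:f}, solve for the stencil weights \eqref{C:M=6}, and read off the right-hand side through \eqref{Cfmn:M=6}. The only difference is that the paper disposes of the rank claims in your steps (a)--(b) with the phrase ``by calculation'', whereas you supply an explicit argument via the reduction $k^3=k$, $k^4=k^2$ on $\{-1,0,1\}$ and an identification of the eight-dimensional image; this is a helpful elaboration but not a different route.
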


\subsection{Irregular points{\label{IrrpointSection}}}

Let $(x_i,y_j)$ be an irregular point, that is, both $d_{i,j}^P\ne \emptyset$ and $d_{i,j}^N\ne \emptyset$.
In this subsection, we shall find a compact stencil at an irregular point $(x_i,y_j)$ for a given accuracy order $M$. The idea is essentially the same, although the technicalities are much more complicated.

Let $(x_i,y_j)$ be an irregular  point and we shall take a base point $(x^*_i,y^*_j)\in \Gamma \cap (x_i-h,x_i+h)\times (y_j-h,y_j+h)$ on the interface $\Gamma$ and inside $(x_i-h,x_i+h)\times (y_j-h,y_j+h)$.
That is, as in \eqref{base:pt},
\begin{equation} \label{base:pt:gamma}
x_i^*=x_i-v_0  \quad \mbox{and}\quad y_j^*=y_j-w_0  \quad \mbox{with}\quad
-h<v_0, w_0<h \quad \mbox{and}\quad (x_i^*,y_j^*)\in \Gamma.
\end{equation}
Let $u_{\pm}$ and $f_{\pm}$ represent the solution $u$ and source term $f$ in $\Op$ or $\Om$, respectively.
As in \eqref{ufmn},  we define
\[
u_{\pm}^{(m,n)}:=\frac{\partial^{m+n} u_{\pm}}{ \partial^m x \partial^n y}(x^*_i,y^*_j),\qquad f_{\pm}^{(m,n)}:=\frac{\partial^{m+n} f_{\pm}}{ \partial^m x \partial^n y}(x^*_i,y^*_j)
\]
and
\[
g^{(m,n)}:=\frac{\partial^{m+n} g}{ \partial^m x \partial^n y}(x^*_i,y^*_j),\qquad
g_1^{(m,n)}:=\frac{\partial^{m+n} g_1}{ \partial^m x \partial^n y}(x^*_i,y^*_j).
\]
Since the base point $(x_i^*,y_j^*)$ is now on the interface $\Gamma$,  the equation $-\nabla^2 u=f$ is no longer valid at the base point $(x_i^*,y_j^*)$. However, the curve $\Gamma$ is smooth and we assumed that the solution $u$ and $f$ are piecewise smooth. More precisely, $u_+$ and $f_+$ on $\Op$ can be extended into smooth functions in a neighborhood of $(x_i^*,y_j^*)$, while
$u_-$ and $f_-$ on $\Om$ can be extended into smooth functions in a neighborhood of $(x_i^*,y_j^*)$.
Therefore, \cref{lem:uderiv} still holds
for $u_\pm$ and $f_\pm$.
In other words, the identities in \eqref{uderiv:relation} hold by replacing $u$ and $f$ by $u_\pm$ and $f_\pm$, respectively. Consequently, the key identity in \eqref{u:approx:key} still holds by replacing $u$ and $f$ with $u_\pm$ and $f_\pm$, respectively.
Explicitly,
\be \label{u:approx:ir:key}
u_\pm (x+x_i^*,y+y_j^*)
=\sum_{(m,n)\in \ind_{M+1}^1}
u_\pm^{(m,n)} G_{m,n}(x,y) +\sum_{(m,n)\in \ind_{M-1}}
f_\pm ^{(m,n)} H_{m,n}(x,y)+\bo(h^{M+2}),
\ee
for $x,y\in (-2h,2h)$, where the index sets $\ind_{M+1}^1$ and $\ind_{M-1}$ are defined in \eqref{Sk1} and \eqref{Sk}, respectively,  while the polynomials $G_{m,n}(x,y)$ and $H_{m,n}(x,y)$ are defined in \eqref{Gmn} and \eqref{Hmn}, respectively.

For a compact stencil at an irregular point $(x_i,y_j)$ to achieve a given accuracy order $M$, we need to find nontrivial stencil coefficients satisfying
{\small{\be \label{stencil:irregular}
\begin{split}
\sum_{k=-1}^1 \sum_{\ell=-1}^1
&C_{k,\ell}(h) u(x_i+kh,y_j+\ell h)=
\sum_{(m,n)\in \ind_{M-1}} C_{f_{+},m,n}(h) f_+^{(m,n)}
+\sum_{(m,n)\in \ind_{M-1}} C_{f_{-},m,n}(h)f_-^{(m,n)}\\
&+\sum_{(m,n)\in \ind_{M+1}} C_{g_1,m,n}(h) g_1^{(m,n)}+
+\sum_{(m,n)\in \ind_{M}} C_{g,m,n}(h) g^{(m,n)}
+\bo(h^{M+2}),\qquad h\to 0,
\end{split}
\ee
}}
where $C_{k,\ell}, C_{f_{\pm},m,n}$,
$C_{g_1,m,n}$ and $C_{g,m,n}$
are to-be-determined polynomials of $h$ having degree less than $M+2$.
Because some indices $(k, \ell)$ may come from $d_{i,j}^+$ while others from $d_{i,j}^-$, we need to link information on $\Op$ and $\Om$ at the base point $(x_i^*,y_j^*)\in \Gamma$.
To do so, instead of using the level set function $\psi$ to describe the interface curve $\Gamma$, we shall now assume that we have a parametric equation for $\Gamma$ near the base point $(x_i^*,y_j^*)$.
We can easily obtain such a parametric equation by locally solving $\psi(x,y)=0$ near the base point $(x_i^*, y_j^*)$ for either $x$ or $y$. That is,
it suffices to consider one of the following two simple parametric representations of $\Gamma$:
\be\label{parametric:2 simple}
x=t+x_i^*, \quad y=r(t)+y_j^* \qquad \mbox{or}\quad
x=r(t)+x_i^*,\quad y=t+y_j^*, \quad \mbox{for}\;\; t\in (-\epsilon,\epsilon) \quad \mbox{with}\quad \epsilon>0,
\ee
for a smooth function $r$, since $\Gamma$ is assumed to be smooth.
Note that the parameter corresponding to the base point $(x_i^*, y_j^*)$ is $t=0$ with $r(0)=0$.
It is important to notice that we do not need to actually solve $\psi(x,y)=0$ to get the function $r$, because we only need the derivatives of $r(t)$ at $t=0$, which can be easily obtained from $\psi(x,y)=0$ through the Implicit Function Theorem.
To cover the above two cases of parametric equations in \eqref{parametric:2 simple} for $\Gamma$ together, we discuss the following general parametric equation for $\Gamma$:
\be \label{parametric}
x=r(t)+x_i^*,\quad y=s(t)+y_j^*,\quad
(r'(t))^2+(s'(t))^2>0
\quad \mbox{for}\;\; t\in (-\epsilon,\epsilon) \quad \mbox{with}\quad \epsilon>0.
\ee
Note that the parameter $t$ for the base point $(x_i^*,y_j^*)$ is $t=0$ and it is also important to notice that $r(0)=s(0)=0$.

Using the interface conditions in \eqref{Qeques1},
we now link the two sets
$\{u_-^{(m,n)}:(m,n)\in \ind_{M+1}^1\}$ and $\{u_+^{(m,n)}:(m,n)\in \ind_{M+1}^1\}$ through the following result, whose proof is given in \cref{sec:proof}.

\begin{theorem}\label{thm:interface}
Let $u$ be the solution to the Poisson interface problem in \eqref{Qeques1} and the base point $(x_i^*,y_j^*)\in \Gamma$, which is parameterized near $(x_i^*,y_j^*)$ by \eqref{parametric}.
Then
\be \label{tranmiss:cond}
\begin{split}
u_-^{(m',n')}&=u_+^{(m',n')}+\sum_{(m,n)\in \ind_{M-1}} \left(T^+_{m',n',m,n} f_+^{(m,n)}
+ T^-_{m',n',m,n} f_{-}^{(m,n)}\right)\\
&+\sum_{(m,n)\in \ind_{M+1}} T^{g_1}_{m',n',m,n} g_{1}^{(m,n)}
+\sum_{(m,n)\in \ind_{M}} T^{g}_{m',n',m,n} g^{(m,n)},\qquad \forall\; (m',n')\in \ind_{M+1}^1,
\end{split}
\ee
where all the transmission coefficients $T^\pm, T^{g_1}, T^g$ are uniquely determined by
$r^{(k)}(0)$ and $s^{(k)}(0)$ for $k=0,\ldots,M+1$ and can be easily obtained by recursively calculating $U^{(m',n')}:=u_+^{(m',n')}-u_-^{(m',n')}, (m',n')\in \ind_{M+1}^1$ through the recursive formulas given in \eqref{interface:u:0} and \eqref{transmissioncoef}.
\end{theorem}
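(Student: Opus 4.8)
The plan is to derive the transmission relations by expanding the two interface conditions $[u]=g_1$ and $[\nabla u\cdot\nv]=g$ along the parametrization \eqref{parametric} and matching Taylor coefficients in the curve parameter $t$. First I would substitute the one-sided Taylor expansions \eqref{u:approx:ir:key} for $u_+$ and $u_-$ into the identity $u_+(r(t),s(t)) - u_-(r(t),s(t)) = g_1(r(t)+x_i^*, s(t)+y_j^*)$ for $t\in(-\epsilon,\epsilon)$. Since the left side and the right side are both smooth functions of $t$ vanishing to the appropriate orders, I would differentiate $k$ times with respect to $t$ at $t=0$ for $k=0,1,\ldots,M+1$ and use the Fa\`a di Bruno / chain-rule machinery to express each $t$-derivative of $u_\pm(r(t),s(t))$ as a finite linear combination of the quantities $U^{(m,n)}:=u_+^{(m,n)}-u_-^{(m,n)}$ (for $(m,n)\in\ind_{M+1}^1$, after using \cref{lem:uderiv} to eliminate the dependent derivatives) with coefficients that are polynomials in $r^{(j)}(0), s^{(j)}(0)$, plus known terms involving $f_\pm^{(m,n)}$, $g_1^{(m,n)}$. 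This yields a triangular system for the $U^{(m,n)}$ that can be solved recursively in increasing total order $m+n$: the $k$-th derivative condition introduces the "new" unknowns of order $k$ in terms of lower-order ones already determined, which is exactly the recursion alluded to in \eqref{interface:u:0}.

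Next I would handle the flux condition $[\nabla u\cdot\nv]=g$ similarly. The unit normal $\nv$ along $\Gamma$ is $(s'(t), -r'(t))/\sqrt{(r'(t))^2+(s'(t))^2}$ (up to the orientation toward $\Op$), so $\nabla u\cdot\nv$ evaluated on the curve is, after clearing the normalization factor, a smooth function of $t$ built from $\partial_x u$ and $\partial_y u$ composed with $(r(t),s(t))$ and multiplied by $s'(t)$ and $-r'(t)$. I would again differentiate the cleared-denominator form $\big(\nabla u_+\cdot(s',-r')\big) - \big(\nabla u_-\cdot(s',-r')\big) = g\cdot\sqrt{(r')^2+(s')^2}$ (or, to avoid square roots, work with the equivalent polynomial identity obtained by multiplying through) $k$ times at $t=0$ for $k=0,\ldots,M$. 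Each such equation expresses a combination of the $U^{(m,n)}$ of one higher order than those reached by the $[u]=g_1$ expansions, so interleaving the two families of equations fills in all of $\{U^{(m,n)}:(m,n)\in\ind_{M+1}^1\}$ recursively. Collecting coefficients gives precisely the claimed form \eqref{tranmiss:cond} with $u_-^{(m',n')}=u_+^{(m',n')}-U^{(m',n')}$, and the transmission coefficients $T^\pm,T^{g_1},T^g$ emerge as the (polynomial in $r^{(k)}(0),s^{(k)}(0)$) multipliers assembled during the recursion; uniqueness follows because at each step the newly determined $U^{(m,n)}$ is forced.

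The main obstacle is the bookkeeping: one must verify that the combined system really is triangular with respect to a suitable ordering of the pairs $(m,n)\in\ind_{M+1}^1$, i.e.\ that the $k$-th $t$-derivative of $[u]$ (resp.\ of $[\nabla u\cdot\nv]$) introduces exactly one genuinely new unknown not already expressible through lower-order data, and that $(r'(0))^2+(s'(0))^2>0$ guarantees the leading coefficient of that new unknown is nonzero so the recursion never stalls. This requires a careful count of how $G_{m,n}$ and $H_{m,n}$ (which are homogeneous of degrees $m+n$ and $m+n+2$) contribute to each order in $t$, together with the fact from \cref{lem:uderiv} that only the two "rows" $m\in\{0,1\}$ of derivatives are independent — so the $[u]=g_1$ conditions (orders $0$ through $M+1$) pin down the $m=0$ row and the flux conditions (orders $0$ through $M$) pin down the $m=1$ row. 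I would organize the argument so that these two interleaved recursions are displayed explicitly as \eqref{interface:u:0} and \eqref{transmissioncoef}, and then simply read off \eqref{tranmiss:cond}; the remaining verification that the coefficients depend only on $r^{(k)}(0),s^{(k)}(0)$ for $k\le M+1$ is immediate from the fact that only derivatives up to order $M+1$ of $r$ and $s$ appear when differentiating a composition up to order $M+1$.
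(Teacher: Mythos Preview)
Your overall strategy---expand both interface conditions along the parametrization \eqref{parametric}, match Taylor coefficients in $t$, and solve recursively for $U^{(m,n)}=u_+^{(m,n)}-u_-^{(m,n)}$---is exactly the route the paper takes. The gap is in your description of the structure of the resulting linear system.

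You assert that the system is \emph{triangular}, with ``the $k$-th $t$-derivative of $[u]$ (resp.\ of $[\nabla u\cdot\nv]$) introduc[ing] exactly one genuinely new unknown,'' and you conclude that the $[u]$-equations pin down the $m=0$ row of $U^{(m,n)}$ while the flux equations pin down the $m=1$ row. This is not correct. Because both $G_{0,p}$ and $G_{1,p-1}$ are homogeneous of degree $p$, the order-$p$ coefficient of $[u](r(t),s(t))$ involves \emph{both} new unknowns $U^{(0,p)}$ and $U^{(1,p-1)}$ with coefficients $g_{0,p,p}=G_{0,p}(r'(0),s'(0))$ and $g_{1,p-1,p}=G_{1,p-1}(r'(0),s'(0))$; likewise the order-$(p-1)$ coefficient of the (cleared) flux condition involves the \emph{same} two new unknowns with coefficients $\tilde g_{0,p,p-1}$ and $\tilde g_{1,p-1,p-1}$. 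So the system is only \emph{block}-triangular with $2\times2$ diagonal blocks, and the recursion \eqref{transmissioncoef} solves a $2\times2$ system at each level $p$.

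The substantive step you are missing---and it occupies roughly half of the paper's proof---is showing that these $2\times2$ blocks are invertible, i.e.\ that
\[
g_{0,p,p}\,\tilde g_{1,p-1,p-1}-g_{1,p-1,p}\,\tilde g_{0,p,p-1}\ne 0.
\]
The condition $(r'(0))^2+(s'(0))^2>0$ is indeed what makes this work, but not by guaranteeing a single ``leading coefficient'' is nonzero. The paper establishes the identities $\tilde g_{1,p-1,p-1}=p\,g_{0,p,p}$ and $\tilde g_{0,p,p-1}=-p\,g_{1,p-1,p}$, reducing the determinant to $p\big((g_{0,p,p})^2+(g_{1,p-1,p})^2\big)$, and then proves the combinatorial identity
\[
(p!)^2\Big((g_{0,p,p})^2+(g_{1,p-1,p})^2\Big)=\big((r'(0))^2+(s'(0))^2\big)^p
\]
via a Vandermonde-type convolution argument with $(1-x)^p(1+x)^p=(1-x^2)^p$. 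Without this, the recursion could in principle stall, and your proof sketch does not account for it.
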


Now we discuss how to find a compact stencil at an irregular point $(x_i,y_j)$ with the given accuracy order $M$. Since the set $\{-1,0,1\}^2$ is the disjoint union of $d_{i,j}^+$ and $d_{i,j}^-$, we can write
\[
\begin{split}
&\sum_{k=-1}^1 \sum_{\ell=-1}^1
C_{k,\ell}(h) u(x_i+kh,y_j+\ell h)\\
&=\sum_{(k,\ell)\in d_{i,j}^+}
C_{k,\ell}(h) u(x_i^*+v_0+kh,y_j^*+w_0+\ell h)
+\sum_{(k,\ell)\in d_{i,j}^-}
C_{k,\ell}(h) u(x_i^*+v_0+kh,y_j^*+w_0+\ell h).
\end{split}
\]
Using \eqref{u:approx:ir:key}, we have
\be \label{IJ+}
\sum_{(k,\ell)\in d_{i,j}^{\pm}}
C_{k,\ell}(h) u(x_i^*+v_0+kh,y_j^*+w_0+\ell h)
=\sum_{(m,n)\in \ind_{M+1}^1}
u_\pm^{(m,n)} I^\pm_{m,n}(h)+
\sum_{(m,n)\in \ind_{M-1}}
 f_\pm ^{(m,n)} J^{\pm,0}_{m,n}(h),
\ee
where
\[
I^\pm_{m,n}(h):=\sum_{(k,\ell)\in d_{i,j}^\pm}
C_{k,\ell}(h) G_{m,n}(v_0+kh,w_0+\ell h),
\quad
J^{\pm,0}_{m,n}(h):=\sum_{(k,\ell)\in d_{i,j}^\pm} C_{k,\ell}(h) H_{m,n}(v_0 +kh,w_0+\ell h).
\]
Now using the transmission coefficients in \cref{thm:interface}, we have
\begin{align*}
\sum_{(m',n')\in \ind_{M+1}^1}
u_-^{(m',n')} I^-_{m',n'}(h)
=&\sum_{(m',n')\in \ind_{M+1}^1} u_+^{(m',n')}
I^-_{m',n'}(h)+
\sum_{(m,n)\in \ind_{M-1}}
\left(f_+^{(m,n)} J^{+,T}_{m,n}(h)+
f_-^{(m,n)} J^{-,T}_{m,n}(h)\right)\\
&+\sum_{(m,n)\in \ind_{M+1}} g_1^{(m,n)}J^{g_1}_{m,n}(h)+
\sum_{(m,n)\in \ind_M} g^{(m,n)}J^{g}_{m,n}(h),
\end{align*}
where
\be
\label{JTno}
\begin{split}
&J^{\pm,T}_{m,n}(h):=
\sum_{(m',n')\in \ind_{M+1}^1} I^-_{m',n'}(h) T^\pm_{m',n',m,n},\\
&J^{g_1}_{m,n}(h):=
\sum_{(m',n')\in \ind_{M+1}^1} I^-_{m',n'}(h) T^{g_1}_{m',n',m,n},\quad
J^{g}_{m,n}(h):=
\sum_{(m',n')\in \ind_{M+1}^1} I^-_{m',n'}(h) T^g_{m',n',m,n}.
\end{split}
\ee
Putting everything together we have
\be \label{u:sum}
\begin{split}
\sum_{k=-1}^1 \sum_{\ell=-1}^1
&C_{k,\ell}(h) u(x_i+kh,y_j+\ell h)
=\sum_{(m,n)\in \ind_{M+1}^1}
u_+^{(m,n)} I_{m,n}(h)+
\sum_{(m,n)\in \ind_{M-1} } f_+^{(m,n)}J^+_{m,n}(h)\\
&+\sum_{(m,n)\in \ind_{M-1}} f_-^{(m,n)}J^-_{m,n}(h)
+\sum_{(m,n)\in \ind_{M+1}} g_1^{(m,n)}J^{g_1}_{m,n}(h)
+\sum_{(m,n)\in \ind_{M}} g^{(m,n)}J^{g}_{m,n}(h),
\end{split}
\ee
where
\be \label{IJ:ir}
I_{m,n}(h):=I^+_{m,n}(h)+I^-_{(m,n)}(h),
\quad J^{\pm}_{m,n}(h):=
J_{m,n}^{\pm,0}(h)+J^{\pm,T}_{m,n}(h).
\ee
Note that all the unknowns are $u_+^{(m,n)}$ for $(m,n)\in \ind_{M+1}^1$, $f_\pm^{(m,n)}$ for $(m,n)\in \ind_{M-1}$, $g_1^{(m,n)}$ for $(m,n)\in \ind_{M+1}$ and $g^{(m,n)}$ for $(m,n)\in \ind_M$.
Therefore, to find a compact stencil at an irregular point $(x_i,y_j)$ with a desired accuracy order $M$, the conditions in \eqref{stencil:irregular} are equivalent to
\begin{align}
&I_{m,n}(h)=\bo(h^{M+2}),  &&h\to 0, \; \mbox{ for all }\; (m,n)\in \ind_{M+1}^1, \label{stencil:regular:u:ir}\\
&J^\pm_{m,n}(h)=C_{f_\pm,m,n}(h)+\bo(h^{M+2}),
&&h\to 0,\; \mbox{ for all }\; (m,n)\in \ind_{M-1},\label{stencil:regular:f:ir}\\
&J^{g_1}_{m,n}(h)=C_{g_1,m,n}(h)+\bo(h^{M+2}),
&&h\to 0,\; \mbox{ for all }\; (m,n)\in \ind_{M+1},\label{stencil:regular:g1:ir}\\
&J^{g}_{m,n}(h)=C_{g,m,n}(h)+\bo(h^{M+2}),
&&h\to 0,\; \mbox{ for all }\; (m,n)\in \ind_{M}.\label{stencil:regular:g:ir}
\end{align}
Due to the relations in \eqref{tranmiss:cond} of \cref{thm:interface},
we observe that
the solution $\{C_{k,\ell}\}_{k,\ell=-1,0,1}$ in \eqref{C:M=6} is also a solution to
\eqref{stencil:regular:u:ir} with $M=6$.
Now similar to \eqref{Cfmn:M=6}, plugging the values of $\{C_{k,\ell}\}_{k,\ell=-1,0,1}$ in \eqref{C:M=6} into \eqref{stencil:regular:f:ir},
\eqref{stencil:regular:g1:ir} and \eqref{stencil:regular:g:ir} with $M=6$, we obtain all the other stencil coefficients given by
\begin{align}
&C_{f_\pm,m,n}(h):=J_{m,n}^\pm(h), \quad (m,n)\in \ind_{5},\label{coeff:f:ir}\\
&C_{g_1,m,n}(h):=J^{g_1}_{m,n}(h),\quad (m,n)\in \ind_{7}\quad
\mbox{and}\quad
C_{g,m,n}(h)=J^g_{m,n}(h),\quad (m,n)\in \ind_6.\label{coeff:g:ir}
\end{align}

In summary, we obtain the following theorem for compact stencils at irregular points.

\begin{theorem}\label{fluxtm2}
Let $(u_{h})_{i,j}$ be the numerical solution of \eqref{Qeques1} at an irregular point $(x_i, y_j)$. Pick a base point
$(x_i^*,y_j^*)$ as in \eqref{base:pt:gamma}.
Then
the following compact scheme centered at the irregular point $(x_i,y_j)$:
\begin{equation}\label{stencil:irregular1}
\begin{split}
    &(u_{h})_{i-1,j-1}+4(u_{h})_{i-1,j}+(u_{h})_{i-1,j+1}+4(u_{h})_{i,j-1}
    -20(u_{h})_{i,j}+4(u_{h})_{i,j+1}+(u_{h})_{i+1,j-1}\\
	 &\qquad +4(u_{h})_{i+1,j}+(u_{h})_{i+1,j+1}=\sum_{(m,n)\in \ind_{5} } f_+^{(m,n)}J^+_{m,n}(h)+\sum_{(m,n)\in \ind_{5}} f_-^{(m,n)}J^-_{m,n}(h)\\
	 &\hspace{4cm}+\sum_{(m,n)\in \ind_{7}} g_1^{(m,n)}J^{g_1}_{m,n}(h)
	 +\sum_{(m,n)\in \ind_{6}} g^{(m,n)}J^{g}_{m,n}(h),
\end{split}
\end{equation}
achieves sixth  order of accuracy,
where the quantities $J^\pm_{m,n}, (m,n)\in \ind_5$, $J^{g_1}_{m,n}, (m,n)\in \ind_7$ and $J^g_{m,n}, (m,n)\in \ind_6$ are given in  \eqref{IJ:ir} and \eqref{JTno}, respectively.
Moreover, the stencils for the accuracy order $M=3,4,5$ can be easily obtained from the stencil in \eqref{stencil:irregular1} by dropping $G_{m,n}$ with $m+n>M+1$ and $H_{m,n}$ with $m+n>M-1$.
\end{theorem}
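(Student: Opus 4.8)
The plan is to verify that, with the nine stencil coefficients $\{C_{k,\ell}\}_{k,\ell=-1,0,1}$ frozen at the values \eqref{C:M=6} and the remaining coefficients defined by \eqref{coeff:f:ir}--\eqref{coeff:g:ir}, inserting the exact solution into the left side of \eqref{stencil:irregular1} reproduces its right side up to an error $\bo(h^{M+2})$ with $M=6$, which is the asserted local truncation error. Almost all of the machinery is already in place: the piecewise Taylor expansion \eqref{u:approx:ir:key} of $u_\pm$ about a base point $(x_i^*,y_j^*)\in\Gamma$ as in \eqref{base:pt:gamma}; the transmission identities \eqref{tranmiss:cond} of \cref{thm:interface}, which express each $u_-^{(m',n')}$ through $u_+^{(m',n')}$ plus data in $f_\pm,g_1,g$; and the bookkeeping formula \eqref{u:sum}, which rewrites $\sum_{k,\ell=-1}^1 C_{k,\ell}(h)\,u(x_i+kh,y_j+\ell h)$ as a combination of the genuinely free unknowns $u_+^{(m,n)}$ for $(m,n)\in\ind_{M+1}^1$, $f_\pm^{(m,n)}$ for $(m,n)\in\ind_{M-1}$, $g_1^{(m,n)}$ for $(m,n)\in\ind_{M+1}$ and $g^{(m,n)}$ for $(m,n)\in\ind_M$, with coefficients $I_{m,n}(h)$, $J^\pm_{m,n}(h)$, $J^{g_1}_{m,n}(h)$, $J^g_{m,n}(h)$ from \eqref{IJ:ir}--\eqref{JTno}. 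Thus the theorem reduces to: (a) the coefficient $I_{m,n}(h)$ of every surviving unknown $u_+^{(m,n)}$ is $\bo(h^{M+2})$ for $M=6$; and (b) the conditions \eqref{stencil:regular:f:ir}--\eqref{stencil:regular:g:ir} can be met exactly by the definitions \eqref{coeff:f:ir}--\eqref{coeff:g:ir}.

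The main obstacle is step (a): the coefficients \eqref{C:M=6} were obtained in the regular-point analysis for a base point sitting \emph{at} the grid center, and one must see that they keep annihilating the relevant Taylor data after the base point has been moved onto $\Gamma$, i.e. for an arbitrary shift $(v_0,w_0)\in(-h,h)^2$. My approach: since $d_{i,j}^+$ and $d_{i,j}^-$ partition $\{-1,0,1\}^2$, \eqref{IJ:ir} gives $I_{m,n}(h)=\sum_{k,\ell=-1}^1 C_{k,\ell}\,G_{m,n}(v_0+kh,w_0+\ell h)$, which depends only on the stencil, the shift and the polynomial $G_{m,n}$, not on how $\Gamma$ cuts the stencil. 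From \eqref{Gmn} (equivalently from \cref{lem:uderiv}) one reads off $\partial_x^2 G_{m,n}=-G_{m,n-2}=-\partial_y^2 G_{m,n}$, so each $G_{m,n}$ with $(m,n)\in\ind_{M+1}^1$ is a \emph{homogeneous harmonic} polynomial of degree $m+n$, and for each $d\le M$ the pair $G_{0,d},G_{1,d-1}$ is a basis of the space of homogeneous harmonic polynomials of degree $d$. Consequently $G_{m,n}(v_0+x,w_0+y)$, being a translate of a harmonic polynomial, is harmonic, hence each of its homogeneous parts is harmonic, of degree at most $m+n\le M+1$, with top part $G_{m,n}$ itself; therefore $G_{m,n}(v_0+\cdot,w_0+\cdot)$ lies in $\operatorname{span}\{G_{m',n'}:(m',n')\in\ind_{M+1}^1\}$. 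The regular-point computation already gave $\sum_{k,\ell=-1}^1 C_{k,\ell}\,G_{m',n'}(kh,\ell h)=h^{m'+n'}\sum_{k,\ell}C_{k,\ell}g_{m',n',k,\ell}=0$ for each $(m',n')\in\ind_{M+1}^1$ (this is \eqref{stencil:regular:u:0} with $M=6$), so by linearity $I_{m,n}(h)\equiv 0$; since $I_{m,n}$ is a polynomial in $h$ of degree $<M+2$, this is precisely $I_{m,n}(h)=\bo(h^{M+2})$, i.e. \eqref{stencil:regular:u:ir} holds with the coefficients \eqref{C:M=6}.

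With (a) secured, the assembly is routine. Substituting \eqref{u:approx:ir:key} into $\sum_{k,\ell}C_{k,\ell}u(x_i+kh,y_j+\ell h)$, eliminating $u_-^{(m',n')}$ by \cref{thm:interface}, and collecting terms exactly as in \eqref{u:sum}, the $u_+$-terms contribute $\sum_{(m,n)\in\ind_{M+1}^1}u_+^{(m,n)}I_{m,n}(h)=\bo(h^{M+2})$ (a finite sum of bounded quantities times $\bo(h^{M+2})$), and the $f_\pm$, $g_1$, $g$ terms carry coefficients $J^\pm_{m,n}(h)$, $J^{g_1}_{m,n}(h)$, $J^g_{m,n}(h)$. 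These are polynomials in $h$ of degree $<M+2$: the $J^{\pm,0}$ pieces come from the degree-$(m+n+2)$ polynomials $H_{m,n}$ with $m+n\le M-1$, while $J^{\pm,T},J^{g_1},J^g$ are constant-coefficient combinations of the $I^-_{m',n'}$ (themselves of degree $\le m'+n'\le M+1$). Hence defining the right side coefficients by \eqref{coeff:f:ir}--\eqref{coeff:g:ir} matches them with no truncation loss, so \eqref{stencil:regular:f:ir}--\eqref{stencil:regular:g:ir} hold. For $M=6$ we have $\ind_{M-1}=\ind_5$, $\ind_{M+1}=\ind_7$, $\ind_M=\ind_6$, which turns the identity just obtained into exactly \eqref{stencil:irregular1} with a local truncation error $\bo(h^8)$; since the left side equals $-6h^2 f^{(0,0)}=6h^2\nabla^2 u$ to leading order (cf. \cref{thm:regular}), this is consistency of order six. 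Finally the ``moreover'' follows because the same $\{C_{k,\ell}\}$ satisfies the smaller set of conditions \eqref{stencil:regular:u:ir} for every $M\le 6$, and the order-$M$ scheme results from truncating the expansion \eqref{u:approx:ir:key}, i.e. from discarding all $G_{m,n}$ with $m+n>M+1$ and all $H_{m,n}$ with $m+n>M-1$, which changes the error from $\bo(h^8)$ to $\bo(h^{M+2})$.
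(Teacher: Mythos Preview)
Your proof is correct and follows the same overall route as the paper: derive the bookkeeping identity \eqref{u:sum}, then verify the consistency conditions \eqref{stencil:regular:u:ir}--\eqref{stencil:regular:g:ir} with the fixed coefficients \eqref{C:M=6} and the definitions \eqref{coeff:f:ir}--\eqref{coeff:g:ir}.

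Where you add value is in your justification of \eqref{stencil:regular:u:ir}. The paper simply asserts ``Due to the relations in \eqref{tranmiss:cond} of \cref{thm:interface}, we observe that the solution $\{C_{k,\ell}\}_{k,\ell=-1,0,1}$ in \eqref{C:M=6} is also a solution to \eqref{stencil:regular:u:ir} with $M=6$'', which explains why $I_{m,n}=I_{m,n}^{+}+I_{m,n}^{-}$ reduces to the full nine-point sum but does not spell out why that sum is $\bo(h^{M+2})$ once the base point has been shifted by $(v_0,w_0)$. Your harmonicity argument fills this gap cleanly: each $G_{m,n}$ with $(m,n)\in\ind_{M+1}^1$ is a harmonic polynomial of degree $m+n\le M+1$, hence so is any translate, hence the translate lies in $\operatorname{span}\{G_{m',n'}:(m',n')\in\ind_{M+1}^1\}$; since \eqref{stencil:regular:u:0} annihilates every $G_{m',n'}(kh,\ell h)$ exactly, linearity gives $I_{m,n}(h)\equiv 0$, which is stronger than the required $\bo(h^{M+2})$. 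This is the same conclusion the paper reaches, but your argument makes transparent \emph{why} the regular-point stencil coefficients carry over unchanged to an arbitrary base point on $\Gamma$.
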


\section{Numerical experiments}\label{sec:Numeri}

Let $\Omega=(l_1,l_2)\times(l_3,l_4)$ with
$l_4-l_3=N_0(l_2-l_1)$ for some positive integer $N_0$. For a given $J\in \NN$, we define
$h:=(l_2-l_1)/N_1$ with $N_1:=2^J$ and let
$x_i=l_1+ih$ and
$y_j=l_3+jh$ for $i=1,2,\dots,N_1-1$ and $j=1,2,\dots,N_2-1$ with $N_2:=N_0N_1$.
Let
$u(x,y)$ be the exact solution of \eqref{Qeques1} and $(u_{h})_{i,j}$ be the numerical solution at $(x_i, y_j)$ using the mesh size $h$.
We shall measure the consistency of our proposed scheme in the $l_2$ norm by the relative error
$\frac{\|u_{h}-u\|_{2}}{\|u\|_{2}}$ if the exact solution $u$ is available,
and
by the relative error $\frac{\|u_{h}-u_{h/2}\|_{2}}{\|u_{h/2}\|_{2}}$ if the exact solution is not known (and hence we use the next level numerical solution $u_{h/2}$ as a reference solution), where
\begin{align*}
&\|u_{h}-u\|_{2}^2:= \frac{1}{N}\sum_{i=1}^{N_1-1}\sum_{j=1}^{N_2-1} \left((u_h)_{i,j}-u(x_i,y_j)\right)^2, &&\|u\|_{2}^2:=\frac{1}{N} \sum_{i=1}^{N_1-1}\sum_{j=1}^{N_2-1} \left(u(x_i,y_j)\right)^2,\\
&\|u_{h}-u_{h/2}\|_{2}^2:= \frac{1}{N}\sum_{i=1}^{N_1-1}\sum_{j=1}^{N_2-1} \left((u_{h})_{i,j}-(u_{h/2})_{2i,2j}\right)^2, &&\|u_{h/2}\|_{2}^2:=\frac{1}{N}\sum_{i=1}^{N_1-1}\sum_{j=1}^{N_2-1} \left((u_{h/2})_{2i,2j}\right)^2,
\end{align*}
where $N:=(N_1-1)(N_2-1)$.
The errors can be also measured in the $l_\infty$ norm as follows:
\[
\|u_h-u\|_\infty
:=\max_{1\le i<N_1, 1\le j<N_2} \left|(u_h)_{i,j}-u(x_i,y_j)\right|,
\qquad
\|u_{h}-u_{h/2}\|_\infty:=\max_{1\le i<N_1,1\le j<N_2} \left|(u_{h})_{i,j}-(u_{h/2})_{2i,2j}\right|.
\]
In addition, $\kappa$ denotes the condition number of the coefficient matrix. According to \cref{thm:regular,fluxtm2}, \eqref{Qeques1} has the same coefficient matrix. So we only provide the values of $\kappa$ in \cref{table:QSp1}.

\subsection{Numerical examples with $u$ known and $\Gamma \cap \partial \Omega=\emptyset$}

In this subsection, we provide a few numerical experiments such that the exact solution $u$ of \eqref{Qeques1} is known and the interface curve $\Gamma$ does not touch the boundary of $\Omega$.

\begin{example}\label{ex1}
\normalfont
Let $\Omega=(-\pi,\pi)^2$ and
the interface curve be given by
$\Gamma:=\{(x,y)\in \Omega \; :\; \psi(x,y)=0\}$ with
$\psi (x,y)=x^2+y^2-2$. Note that $\Gamma \cap \partial \Omega=\emptyset$ and
the exact solution $u$ of \eqref{Qeques1} is given by
\[
u_{+}=u\chi_{\Op}=\sin(4x)(2-(x^2+y^2))^2,
\qquad u_{-}=u\chi_{\Om}=\cos(4y)(2-(x^2+y^2))^2+100.
\]
All the functions $f,g,g_0,g_1$ in \eqref{Qeques1} can be obtained by plugging the above exact solution into \eqref{Qeques1}. In particular,
$g_1=-100$ and $g=0$.
The numerical results are presented in \cref{table:QSp1} and \cref{fig:figure1}.	 
\end{example}

\begin{table}[htbp]
	\caption{Performance in \cref{ex1}  of the proposed sixth order compact finite difference scheme in \cref{thm:regular,fluxtm2} on uniform Cartesian meshes with $h=2^{-J}\times2\pi$. $\kappa$ is the condition number of the coefficient matrix.}
	\centering
	\setlength{\tabcolsep}{2mm}{
		 \begin{tabular}{c|c|c|c|c|c|c|c|c|c}
			\hline
			$J$
			& $\frac{\|u_{h}-u\|_{2}}{\|u\|_{2}}$ 
			
			&order & $\|u_{h}-u\|_{\infty}$
			
			&order &  $\frac{\|u_{h}-u_{h/2}\|_{2}}{\|u_{h/2}\|_{2}}$
			&order &  $\|u_{h}-u_{h/2}\|_{\infty}$
			
			&order &  $\kappa$ \\
			\hline
3   &3.65E+00   &0   &3.55E+02   &0   &3.08E+00   &0   &3.40E+02   &0   &3.14E+01\\
4   &1.25E-01   &4.868   &1.90E+01   &4.224   &1.24E-01   &4.631   &1.89E+01   &4.165   &1.26E+02\\
5   &6.60E-04   &7.566   &1.03E-01   &7.529   &6.56E-04   &7.565   &1.03E-01   &7.528   &5.03E+02\\
6   &3.38E-06   &7.610   &5.87E-04   &7.456   &3.35E-06   &7.613   &5.83E-04   &7.459   &2.01E+03\\
7   &2.55E-08   &7.048   &4.27E-06   &7.103   &2.53E-08   &7.050   &4.24E-06   &7.104   &8.05E+03\\
8   &2.40E-10   &6.733   &3.50E-08   &6.928   &3.58E-10   &6.142   &8.04E-08   &5.720   &3.22E+04\\
			\hline
	\end{tabular}}
	\label{table:QSp1}
\end{table}

\begin{example}\label{ex2}
\normalfont
Let $\Omega=(-\pi,\pi)^2$ and the interface curve be given by $\Gamma:=\{ (x,y)\in \Omega \; : \; \psi(x,y)=0\}$ with $\psi (x,y)=\frac{y^2}{2}+\frac{x^2}{1+x^2}-\frac{1}{2}$. Note that
$\Gamma \cap \partial \Omega=\emptyset$ and
the exact solution $u$ of \eqref{Qeques1} is given by
\[
u_{+}=u\chi_{\Op}=\sin(2x),
\qquad u_{-}=u\chi_{\Om}=\sin(2x)+3.
\]
All the functions $f,g,g_0,g_1$ in  \eqref{Qeques1} can be obtained by plugging the above exact solution into \eqref{Qeques1}. In particular,
$g_1=-3$ and $g=0$.
The numerical results are provided in \cref{table:QSp2} and \cref{fig:figure1}.
\end{example}

\begin{table}[htbp]
	\caption{Performance in \cref{ex2}  of the proposed sixth order compact finite difference scheme in \cref{thm:regular,fluxtm2}  on uniform Cartesian meshes with $h=2^{-J}\times2\pi$.}
	\centering
	\setlength{\tabcolsep}{3mm}{
		 \begin{tabular}{c|c|c|c|c|c|c|c|c}
			\hline
			$J$
& $\frac{\|u_{h}-u\|_{2}}{\|u\|_{2}}$ 

&order & $\|u_{h}-u\|_{\infty}$

&order &  $\frac{\|u_{h}-u_{h/2}\|_{2}}{\|u_{h/2}\|_{2}}$
&order &  $\|u_{h}-u_{h/2}\|_{\infty}$

&order \\
			\hline
3   &2.51E-02   &0   &7.00E-02   &0   &2.49E-02   &0   &6.96E-02   &0\\
4   &1.92E-04   &7.033   &6.63E-04   &6.721   &1.90E-04   &7.036   &6.59E-04   &6.722\\
5   &1.88E-06   &6.667   &7.48E-06   &6.471   &1.87E-06   &6.667   &7.42E-06   &6.474\\
6   &1.67E-08   &6.817   &6.65E-08   &6.813   &1.66E-08   &6.814   &6.60E-08   &6.811\\
7   &1.21E-10   &7.112   &4.94E-10   &7.074   &1.20E-10   &7.113   &4.90E-10   &7.074\\
8   &1.02E-12   &6.891   &4.36E-12   &6.822   &1.10E-12   &6.769   &5.06E-12   &6.598\\
			\hline
	\end{tabular}}
	\label{table:QSp2}
\end{table}

\begin{figure}[htbp]
	\centering
	 \begin{subfigure}[b]{0.3\textwidth}
		 \includegraphics[width=5.7cm,height=5.7cm]{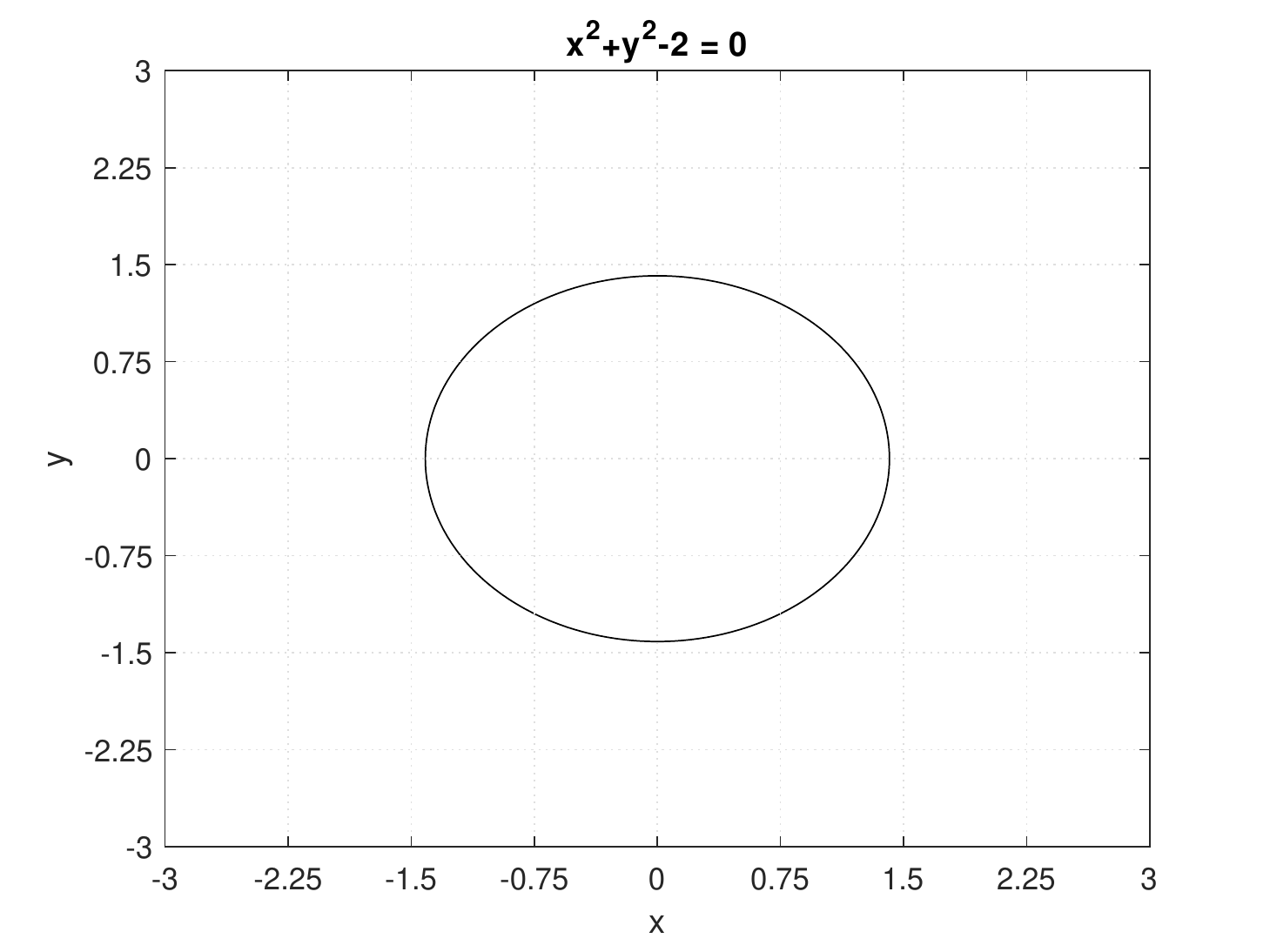}
	\end{subfigure}
	 \begin{subfigure}[b]{0.3\textwidth}
		 \includegraphics[width=5.7cm,height=5.7cm]{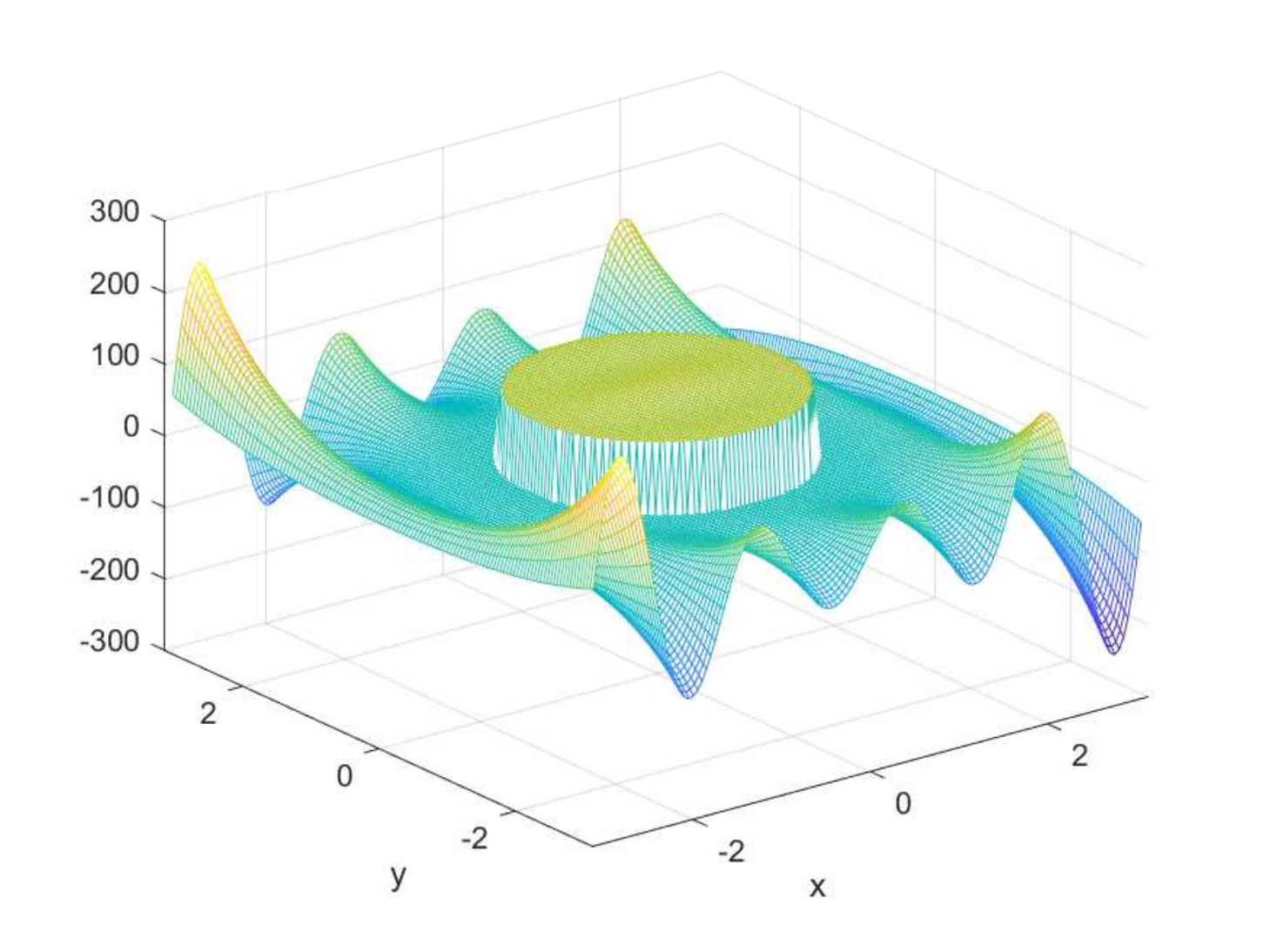}
	\end{subfigure}
	 \begin{subfigure}[b]{0.3\textwidth}
		 \includegraphics[width=5.7cm,height=5.7cm]{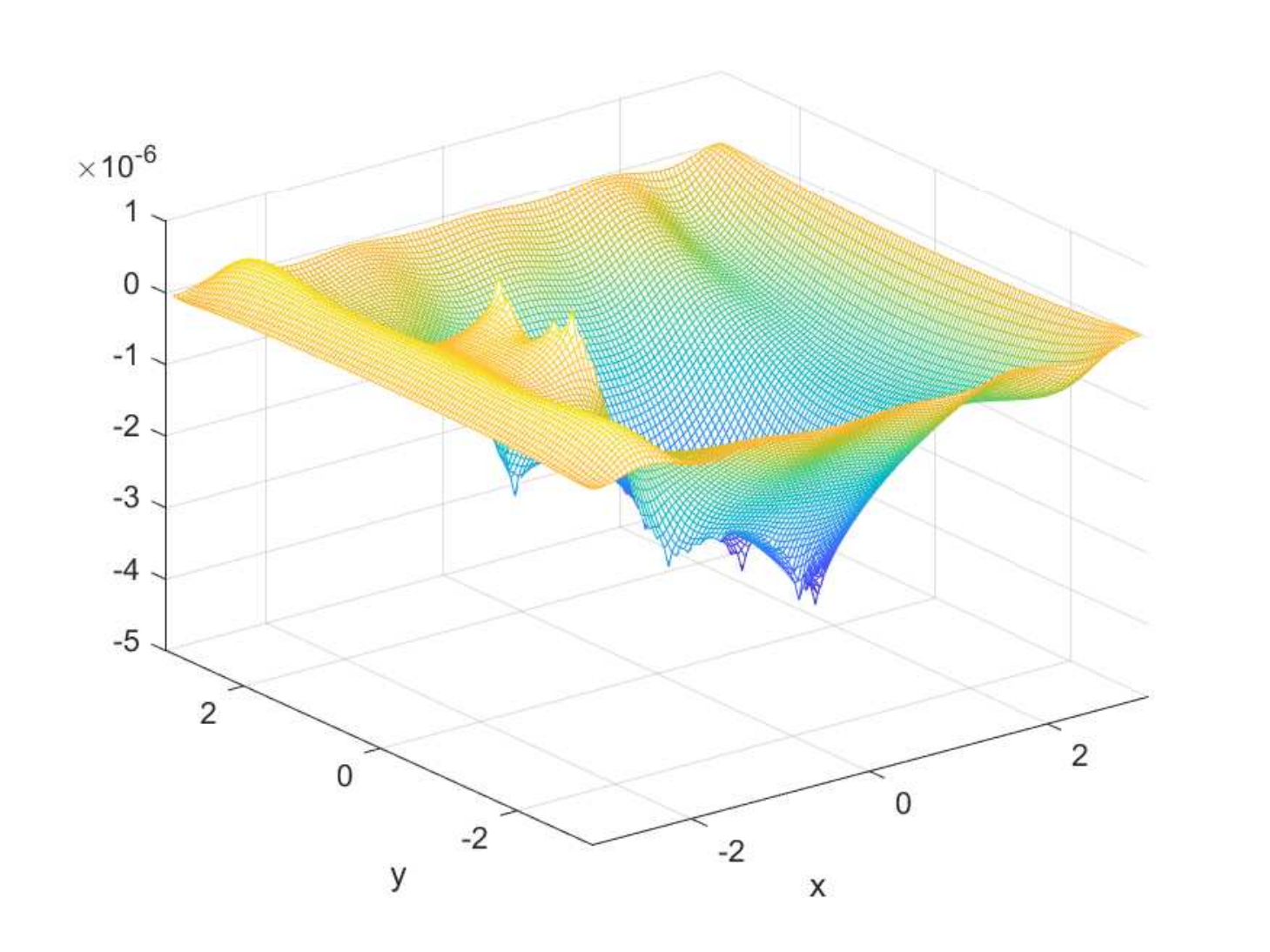}
	\end{subfigure}
	 \begin{subfigure}[b]{0.3\textwidth}
		 \includegraphics[width=5.7cm,height=5.7cm]{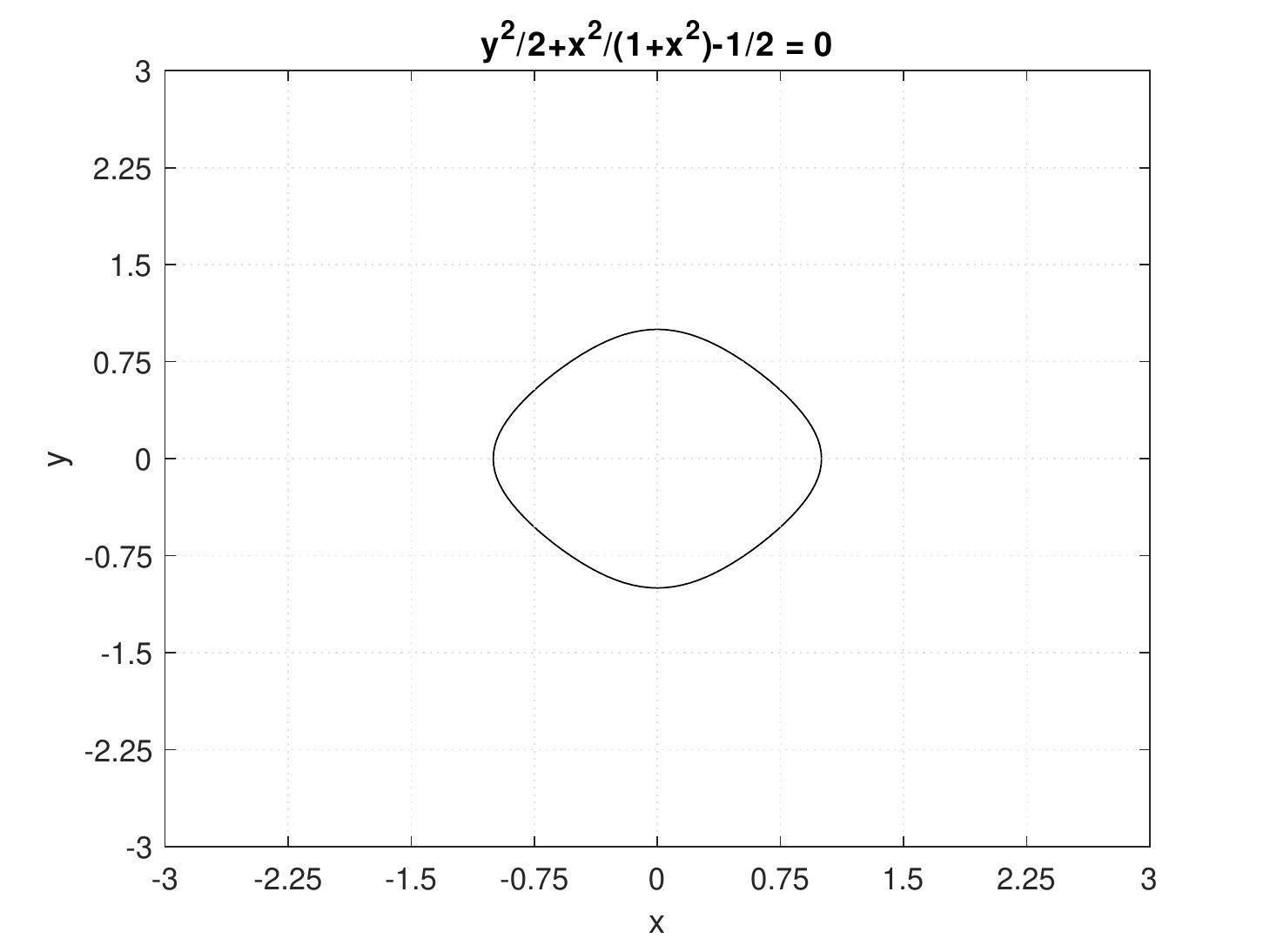}
	\end{subfigure}
	 \begin{subfigure}[b]{0.3\textwidth}
		 \includegraphics[width=5.7cm,height=5.7cm]{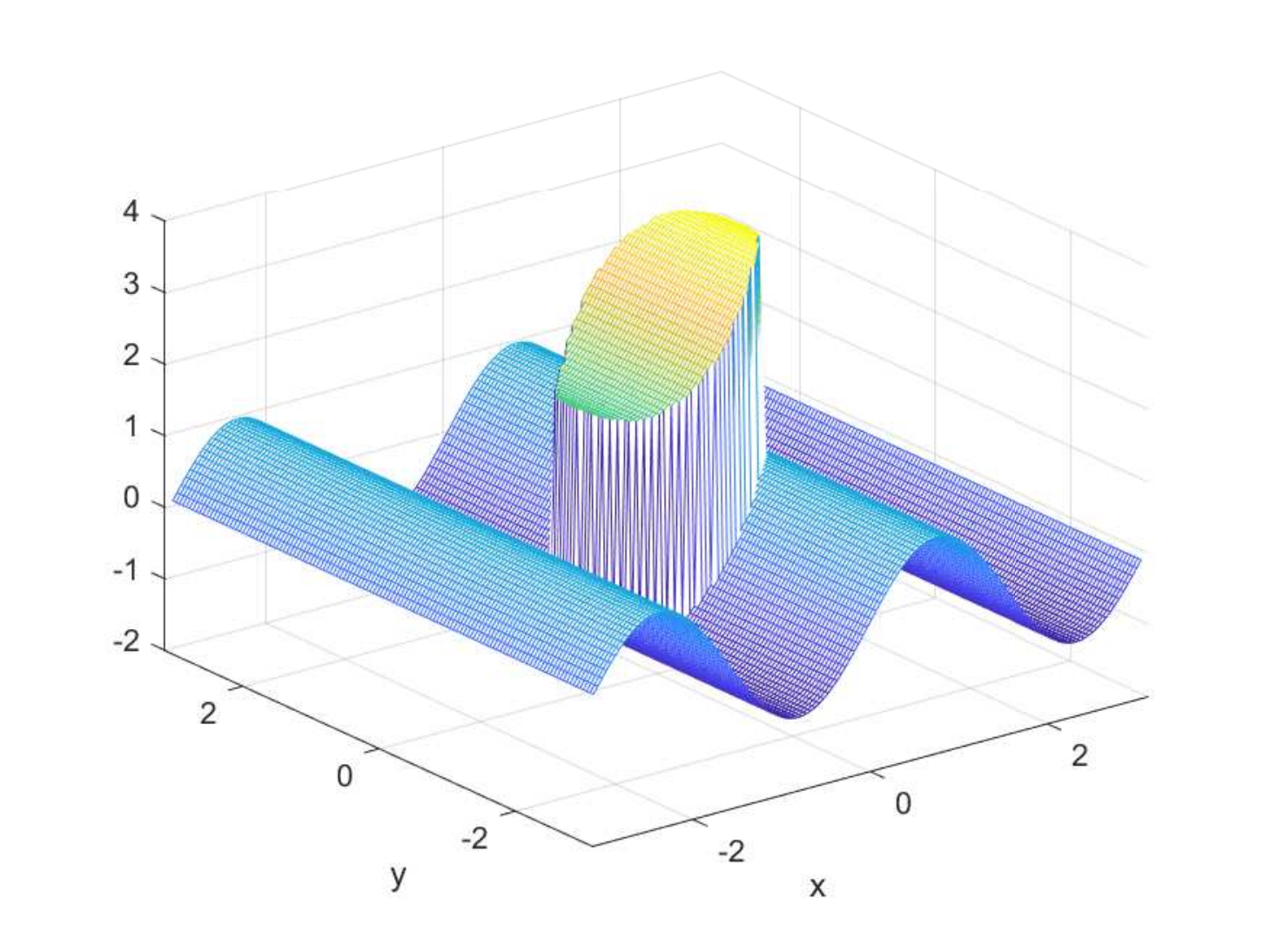}
	\end{subfigure}
	 \begin{subfigure}[b]{0.3\textwidth}
		 \includegraphics[width=5.7cm,height=5.7cm]{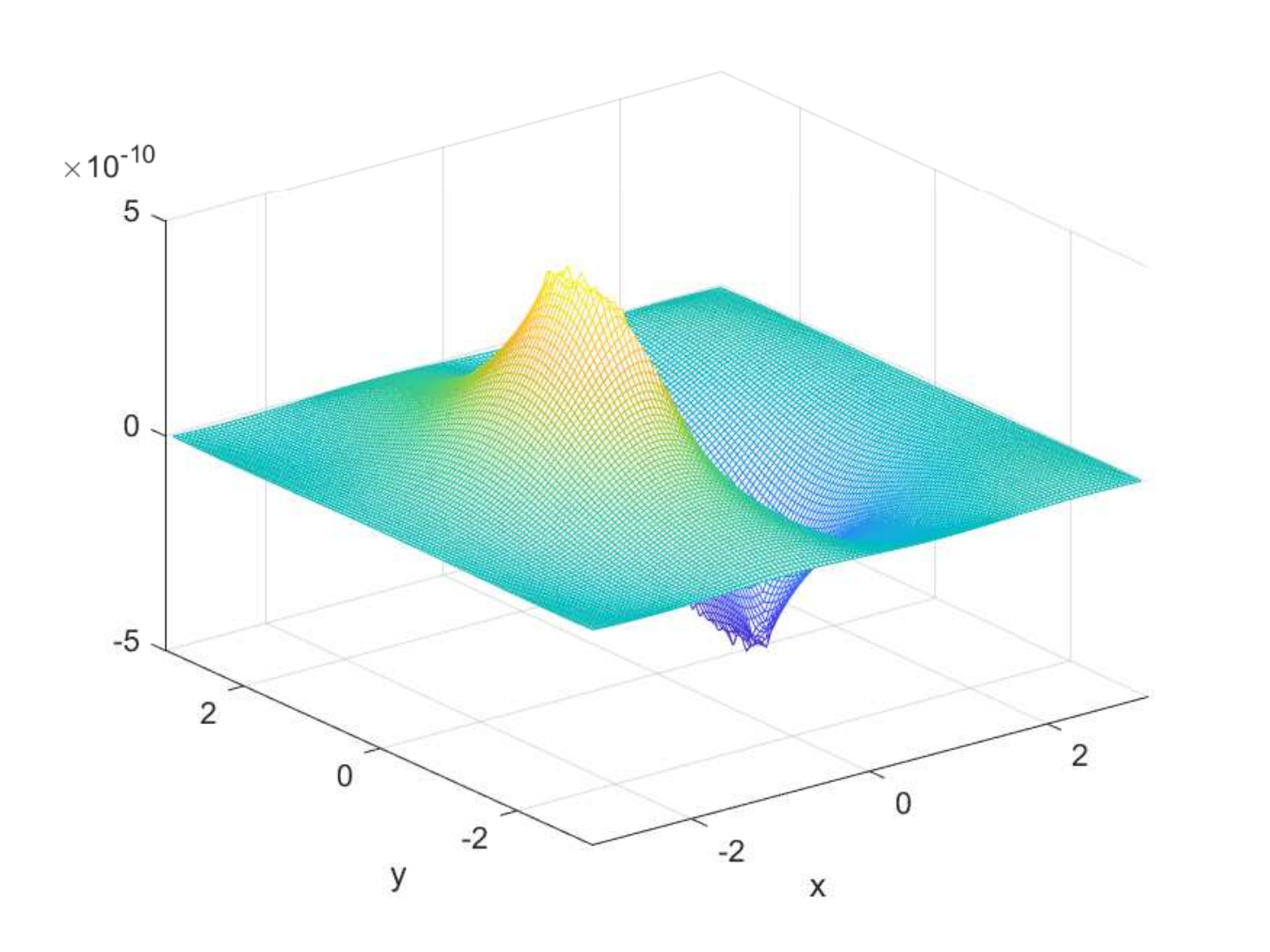}
	\end{subfigure}
	\caption
	{Top row for \cref{ex1}: the interface curve $\Gamma$ (left), the numerical solution $u_h$ (middle) and the error $u-u_h$ (right) with $h=2^{-7}\times 2\pi$. Bottom row for \cref{ex2}: the interface curve $\Gamma$ (left), the numerical solution $u_h$ (middle) and the error $u-u_h$ (right) with $h=2^{-7}\times 2\pi$.}
	\label{fig:figure1}
\end{figure}

\subsection{Numerical examples with $u$ known and $\Gamma \cap \partial \Omega\ne\emptyset$}

In this subsection, we provide a few numerical experiments such that the exact solution $u$ of \eqref{Qeques1} is known and the interface curve $\Gamma$ touches the boundary of $\Omega$.

\begin{example}\label{ex3}
\normalfont
Let $\Omega=(-\frac{3}{2}\pi,\frac{3}{2}\pi)^2$ and the interface curve be given by $\Gamma:=\{ (x,y)\in \Omega \; : \; \psi(x,y)=0\}$ with $\psi (x,y)=y-\cos(x)$. Note that
$\Gamma \cap \partial \Omega\ne\emptyset$ and
the exact solution $u$ of \eqref{Qeques1} is given by
\[
u_{+}=u\chi_{\Op}=-\sin(x)(y-\cos(x))^2,
\quad u_{-}=u\chi_{\Om}=-\cos(y)(y-\cos(x))^2-10.
\]
All the associated functions $f,g,g_0,g_1$ can be obtained by plugging the above exact solution into \eqref{Qeques1}. In particular,
$g_1=10$ and $g=0$.
The numerical results are provided in \cref{table:QSp3} and \cref{fig:figure2}.			 
\end{example}

\begin{table}[htbp]
	\caption{Performance in \cref{ex3}  of the proposed sixth order compact finite difference scheme  in \cref{thm:regular,fluxtm2}  on uniform Cartesian meshes with $h=2^{-J}\times 3\pi$.}
	\centering
	\setlength{\tabcolsep}{3mm}{
		 \begin{tabular}{c|c|c|c|c|c|c|c|c}
			\hline
			$J$
& $\frac{\|u_{h}-u\|_{2}}{\|u\|_{2}}$

&order & $\|u_{h}-u\|_{\infty}$

&order &  $\frac{\|u_{h}-u_{h/2}\|_{2}}{\|u_{h/2}\|_{2}}$

&order &  $\|u_{h}-u_{h/2}\|_{\infty}$

&order \\
			\hline
3   &1.72E-01   &0   &3.28E+00   &0   &1.68E-01   &0   &3.22E+00   &0\\
4   &3.78E-03   &5.508   &7.36E-02   &5.476   &3.77E-03   &5.483   &7.34E-02   &5.454\\
5   &1.28E-05   &8.206   &2.46E-04   &8.224   &1.26E-05   &8.224   &2.42E-04   &8.244\\
6   &1.97E-07   &6.024   &4.25E-06   &5.856   &1.96E-07   &6.009   &4.23E-06   &5.839\\
7   &1.03E-09   &7.577   &2.19E-08   &7.603   &1.02E-09   &7.586   &2.16E-08   &7.611\\
8   &1.17E-11   &6.462   &2.68E-10   &6.348   &1.26E-11   &6.341   &2.81E-10   &6.265\\
			\hline
	\end{tabular}}
	\label{table:QSp3}
\end{table}

\begin{example}\label{ex4}
\normalfont
Let $\Omega=(0,3)^2$ and the interface curve be given by $\Gamma:=\{ (x,y)\in \Omega \; : \; \psi(x,y)=0\}$ with  $\psi (x,y)=x^2/2+y^2/2-2$. Note that
$\Gamma \cap \partial \Omega\ne\emptyset$ and
the exact solution $u$ of \eqref{Qeques1} is given by
\[
u_{+}=u\chi_{\Op}=\sin(\pi x),
\quad u_{-}=u\chi_{\Om}=\sin(\pi x)+2.
\]
All the associated functions $f,g,g_0,g_1$ can be obtained by plugging the above exact solution into \eqref{Qeques1}. In particular,
$g_1=-2$ and $g=0$.
The numerical results are provided in \cref{table:QSp4} and \cref{fig:figure2}.			 
\end{example}

\begin{table}[htbp]
	\caption{Performance in \cref{ex4}  of the proposed  sixth order compact finite difference scheme in \cref{thm:regular,fluxtm2} on uniform Cartesian meshes with $h= 2^{-J}\times 3$.}
	\centering
	\setlength{\tabcolsep}{3mm}{
		 \begin{tabular}{c|c|c|c|c|c|c|c|c}
			\hline
			$J$
& $\frac{\|u_{h}-u\|_{2}}{\|u\|_{2}}$

&order & $\|u_{h}-u\|_{\infty}$

&order &  $\frac{\|u_{h}-u_{h/2}\|_{2}}{\|u_{h/2}\|_{2}}$

&order &  $\|u_{h}-u_{h/2}\|_{\infty}$

&order \\
			\hline
3   &2.96E-03   &0   &1.02E-02   &0   &2.93E-03   &0   &1.02E-02   &0\\
4   &2.93E-05   &6.655   &1.27E-04   &6.333   &2.91E-05   &6.652   &1.26E-04   &6.336\\
5   &2.16E-07   &7.086   &1.03E-06   &6.938   &2.14E-07   &7.086   &1.02E-06   &6.940\\
6   &1.66E-09   &7.025   &8.25E-09   &6.967   &1.65E-09   &7.025   &8.19E-09   &6.968\\
7   &1.28E-11   &7.013   &6.70E-11   &6.943   &1.30E-11   &6.989   &6.71E-11   &6.931\\
8   &2.65E-13   &5.598   &9.91E-13   &6.080   &9.54E-13   &3.763   &3.06E-12   &4.454\\
			\hline
	\end{tabular}}
	\label{table:QSp4}
\end{table}

\begin{figure}[htbp]
	\centering
	 \begin{subfigure}[b]{0.3\textwidth}
		 \includegraphics[width=5.7cm,height=5.7cm]{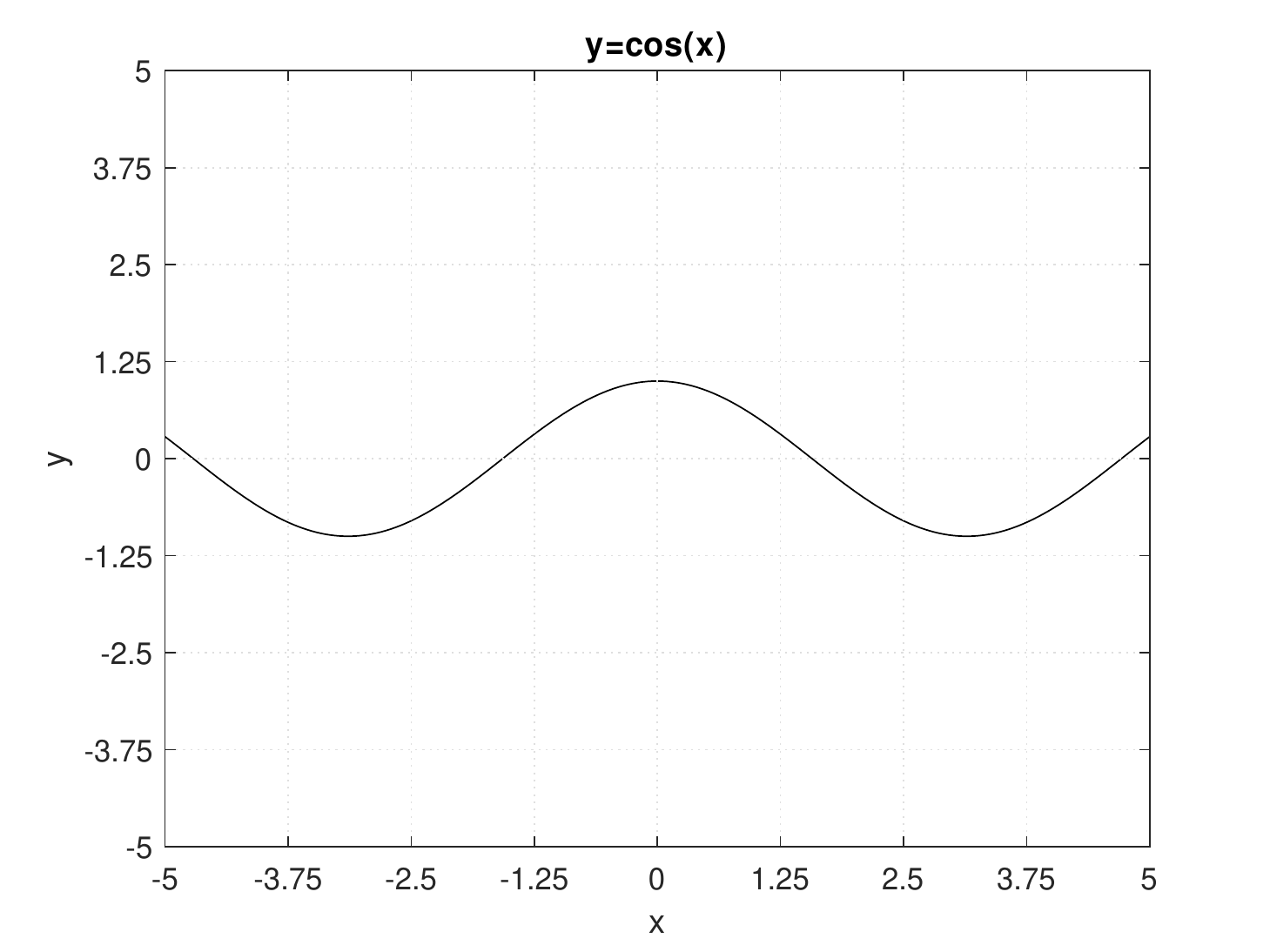}
	\end{subfigure}
	 \begin{subfigure}[b]{0.3\textwidth}
		 \includegraphics[width=5.7cm,height=5.7cm]{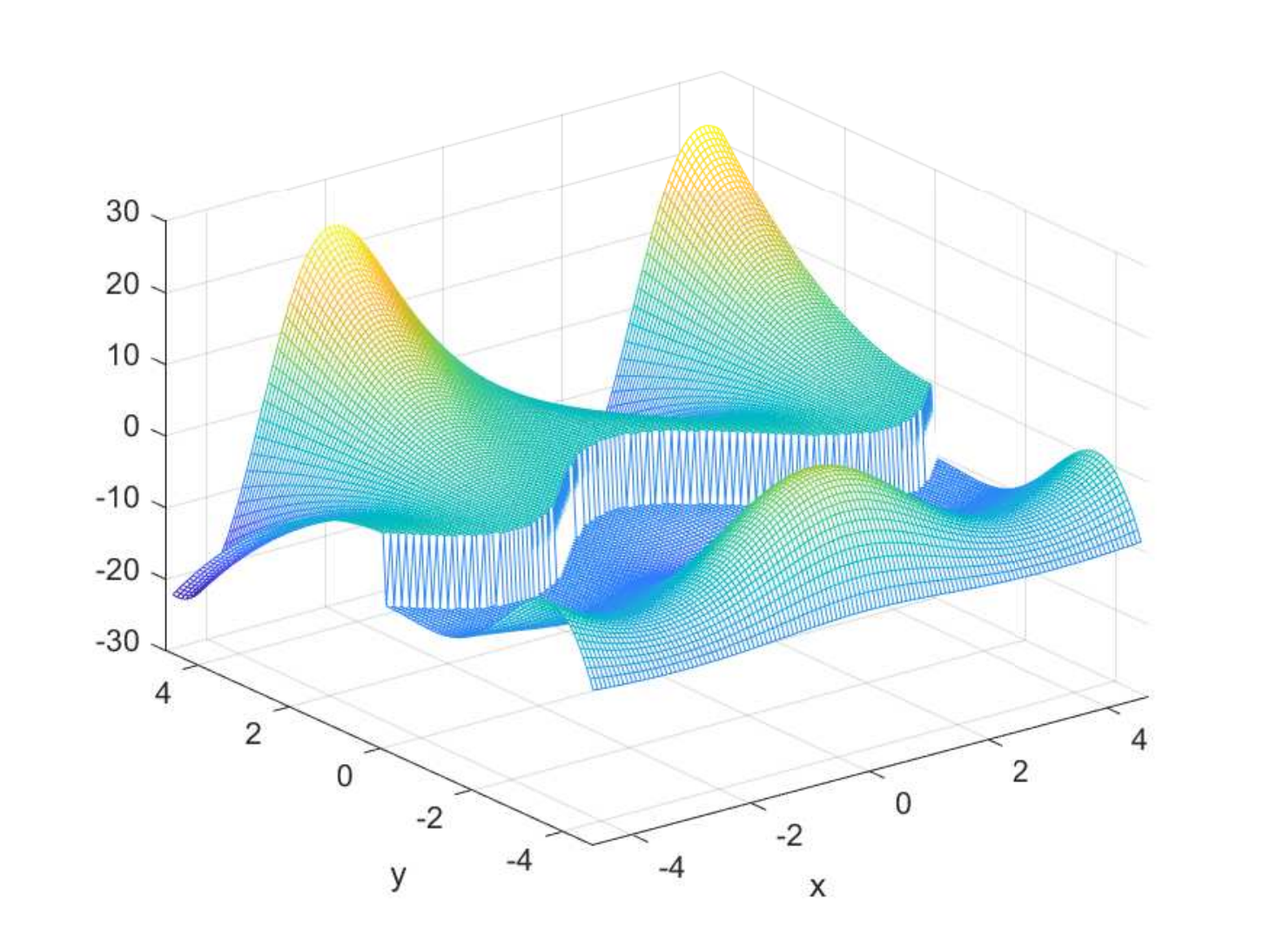}
	\end{subfigure}
	 \begin{subfigure}[b]{0.3\textwidth}
		 \includegraphics[width=5.7cm,height=5.7cm]{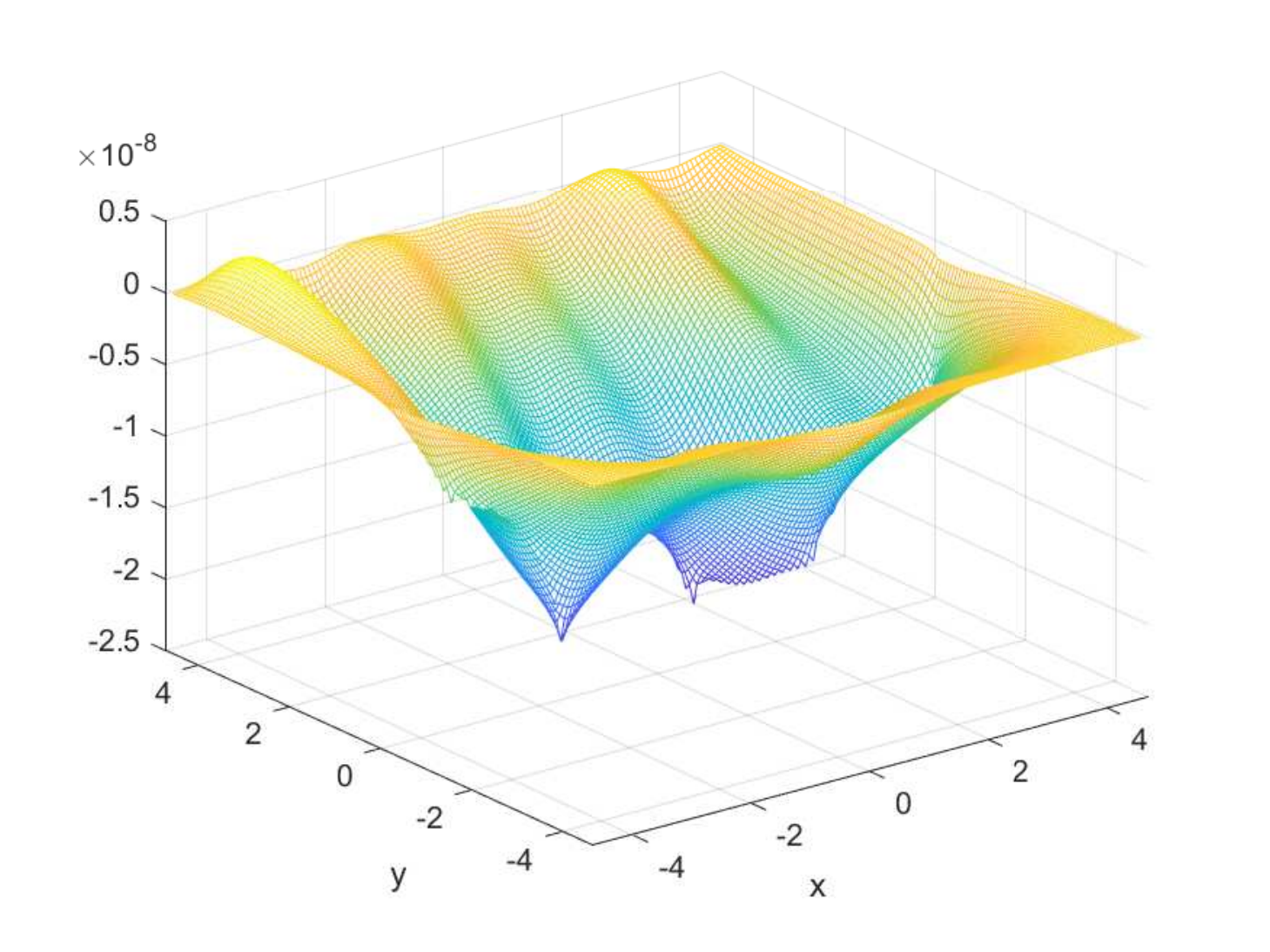}
	\end{subfigure}
	 \begin{subfigure}[b]{0.3\textwidth}
		 \includegraphics[width=5.7cm,height=5.7cm]{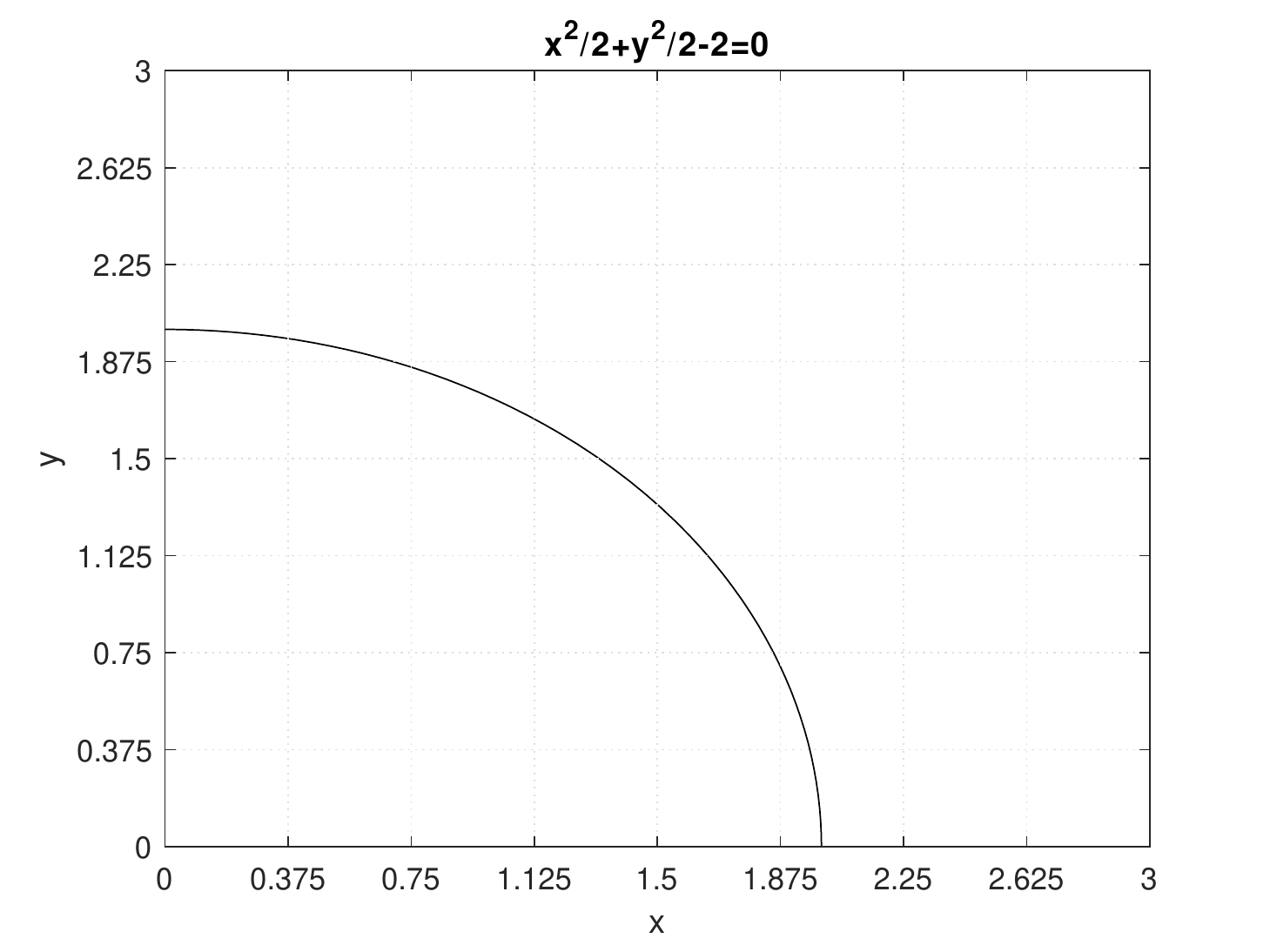}
	\end{subfigure}
	 \begin{subfigure}[b]{0.3\textwidth}
		 \includegraphics[width=5.7cm,height=5.7cm]{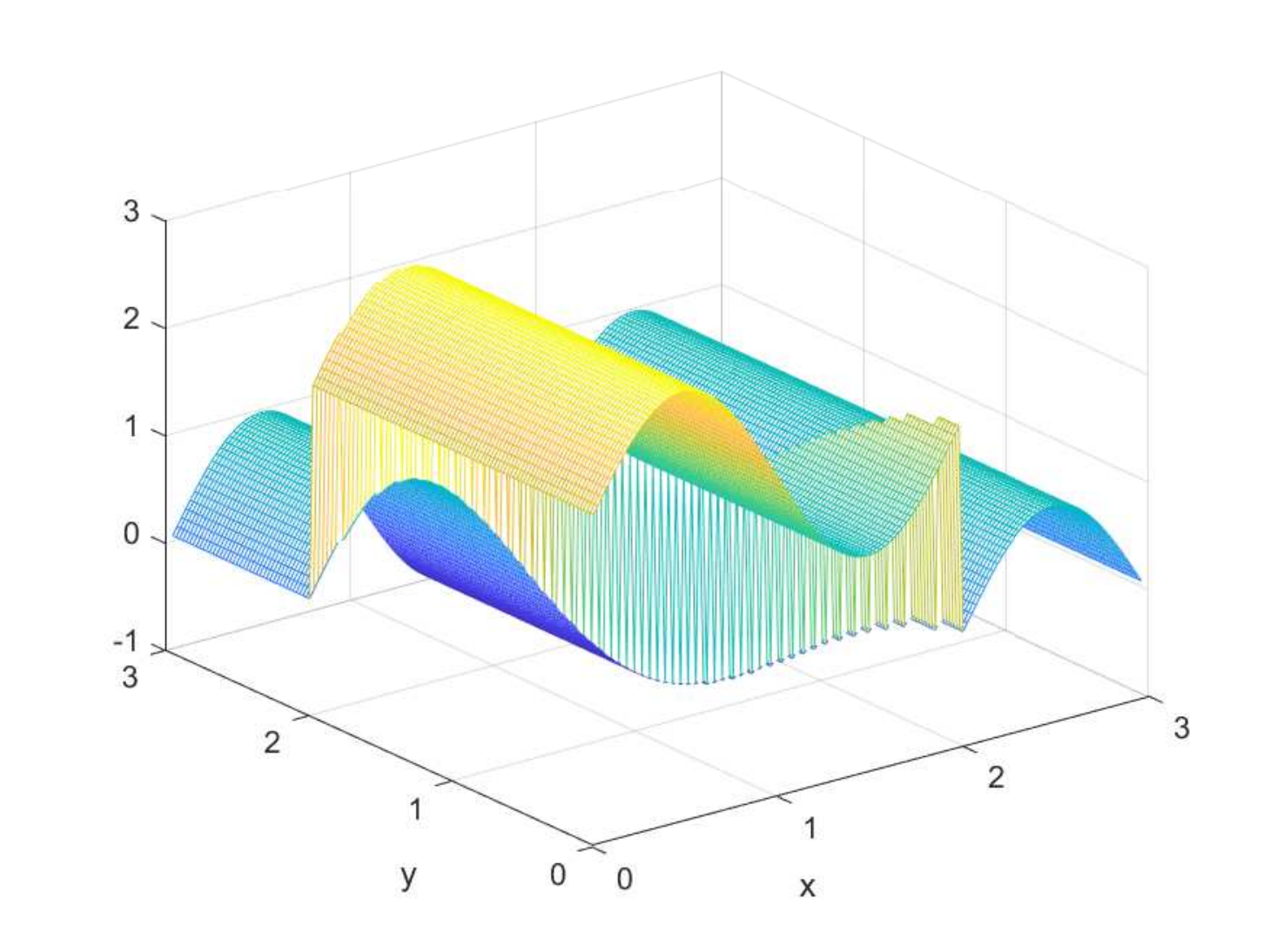}
	\end{subfigure}
	 \begin{subfigure}[b]{0.3\textwidth}
		 \includegraphics[width=5.7cm,height=5.7cm]{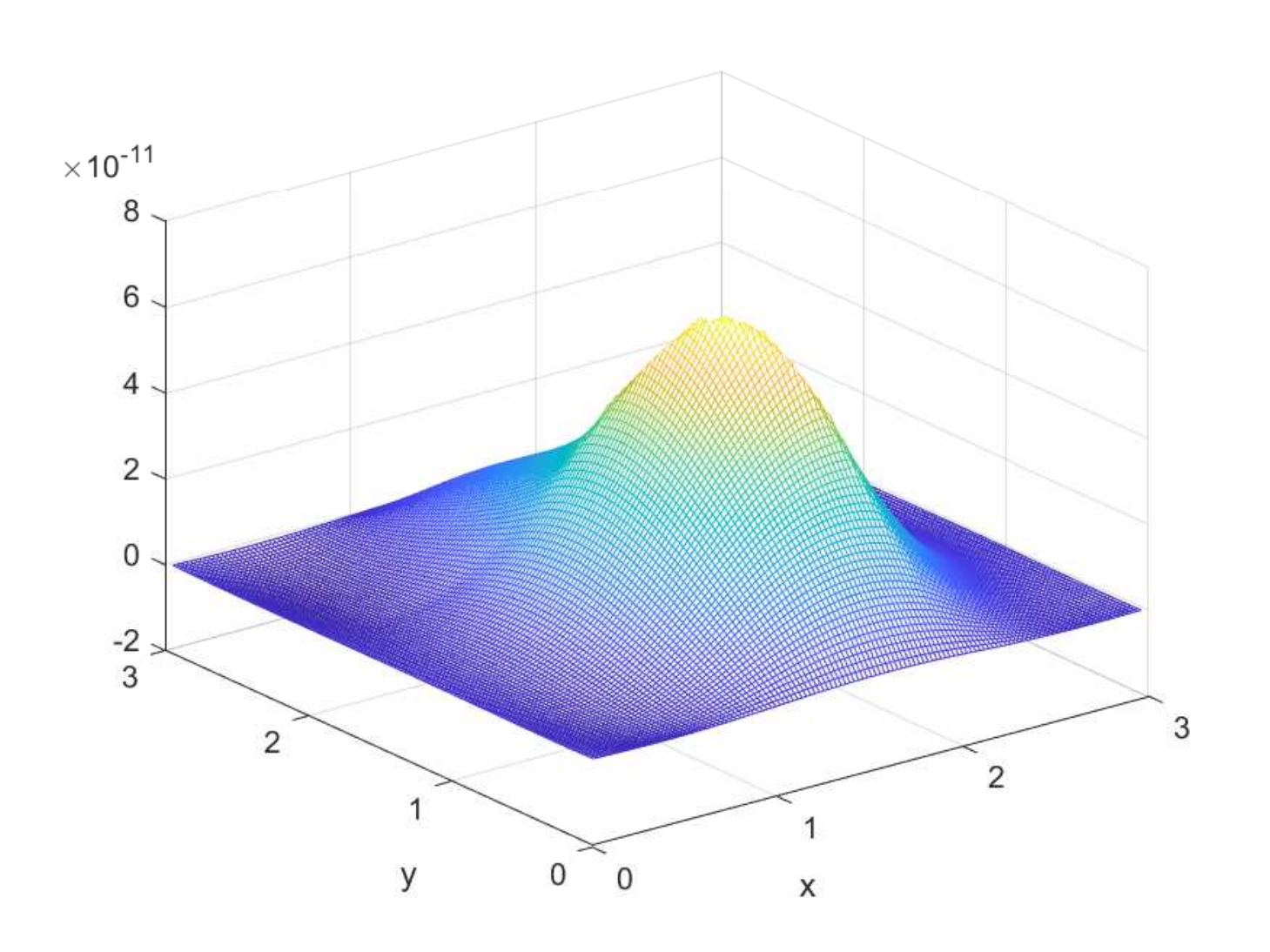}
	\end{subfigure}

	\caption
	{Top row for \cref{ex3}: the interface curve $\Gamma$ (left), the numerical solution $u_h$ (middle) and the error $u-u_h$ (right) with $h=2^{-7}\times 3\pi$. Bottom row for \cref{ex4}: the interface curve $\Gamma$ (left), the numerical solution $u_h$ (middle) and the error $u-u_h$ (right) with $h=2^{-7}\times 3$.}
	\label{fig:figure2}
\end{figure}

\subsection{Numerical examples with $u$ unknown and $\Gamma \cap \partial \Omega=\emptyset$}

In this subsection, we provide a few numerical experiments such that the exact solution $u$ of \eqref{Qeques1} is unknown and the interface curve $\Gamma$ does not touch the boundary of $\Omega$.

\begin{example}\label{ex5}
\normalfont
Let $\Omega=(-\pi,\pi)^2$ and
the interface curve be given by
$\Gamma:=\{(x,y)\in \Omega \; :\; \psi(x,y)=0\}$ with
$\psi (x,y)=\frac{x^2}{2}+\frac{y^2}{2}-1$. Note that $\Gamma \cap \partial \Omega=\emptyset$ and
the coefficients of \eqref{Qeques1} are given by
\begin{align*}
&f_{+}=f\chi_{\Op}=\sin(3x)\sin(3y),
\qquad f_{-}=f\chi_{\Om}=\cos(3x)\cos(3y),\\
&g_0=0, \qquad g_1=-\exp(x-y)\sin(x+y),
\qquad g=-\exp(x+y)\cos(x-y).
\end{align*}
The numerical results are provided in \cref{table:QSp5} and \cref{fig:figure3}.		 
\end{example}

\begin{example}\label{ex6}	
\normalfont
Let $\Omega=(-\pi,\pi)^2$ and
the interface curve be given by
$\Gamma:=\{(x,y)\in \Omega \; :\; \psi(x,y)=0\}$ with
$\psi (x,y)=\frac{y^2}{2}+\frac{x^2}{1+x^2}-\frac{1}{2}$. Note that $\Gamma \cap \partial \Omega=\emptyset$ and
the coefficients of \eqref{Qeques1} are given by
\begin{align*}
&f_{+}=f\chi_{\Op}=\sin(3x)\sin(2y),
\qquad f_{-}=f\chi_{\Om}=\cos(2x)\cos(2y),\\
&g_0=0,
\qquad g_1=-\sin(x)\sin(y),
\qquad g=-\cos(x)\sin(y).
\end{align*}
The numerical results are provided in \cref{table:QSp5} and \cref{fig:figure3}.	 
\end{example}

\begin{table}[htbp]
	\caption{Performance in \cref{ex5} and  \cref{ex6} of the proposed  sixth order compact finite difference scheme in \cref{thm:regular,fluxtm2} on uniform Cartesian meshes with the same mesh size $h=2^{-J}\times 2\pi$.}
	\centering
	\setlength{\tabcolsep}{2.5mm}{
		 \begin{tabular}{c|c|c|c|c|c|c|c|c}
           \hline
           \multicolumn{1}{c|}{} &
           \multicolumn{4}{c|}{\cref{ex5}} &
           \multicolumn{4}{c}{ \cref{ex6}} \\
           \cline{1-9}
$J$&   $\frac{\|u_{h}-u_{h/2}\|_2}{\|u_{h/2}\|_2}$    &order &   $\|u_{h}-u_{h/2}\|_\infty$    &order &   $\frac{\|u_{h}-u_{h/2}\|_2}{\|u_{h/2}\|_2}$    &order &   $\|u_{h}-u_{h/2}\|_{\infty}$    &order \\
			\hline
3   &4.42E-01   &0   &1.48E+00   &0   &5.26E+02   &0   &4.01E+02   &0\\
4   &4.67E-02   &3.245   &9.76E-02   &3.919   &6.79E-02   &12.919   &6.00E-02   &12.705\\
5   &5.01E-04   &6.541   &1.01E-03   &6.589   &8.20E-04   &6.372   &9.18E-04   &6.032\\
6   &4.34E-06   &6.850   &1.01E-05   &6.647   &1.31E-05   &5.974   &1.44E-05   &5.992\\
7   &5.95E-08   &6.190   &4.52E-07   &4.483   &2.09E-07   &5.967   &2.61E-07   &5.789\\
			\hline
	\end{tabular}}
	\label{table:QSp5}
\end{table}

\begin{figure}[htbp]
	\centering
	 \begin{subfigure}[b]{0.45\textwidth}
		 \includegraphics[width=8cm,height=8cm]{AA1.pdf}
	\end{subfigure}
	 \begin{subfigure}[b]{0.45\textwidth}
		 \includegraphics[width=8cm,height=8cm]{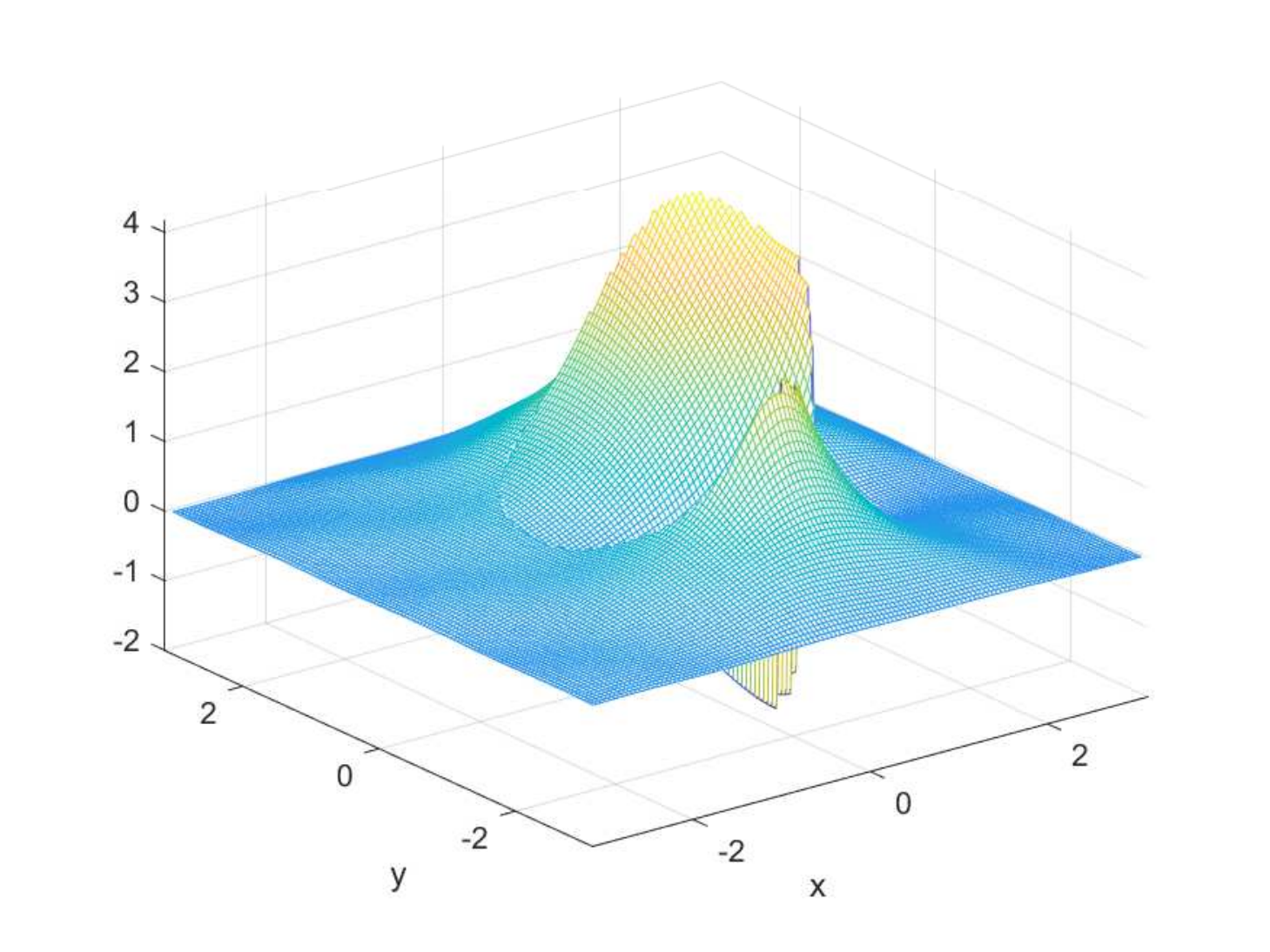}
	\end{subfigure}
	 \begin{subfigure}[b]{0.45\textwidth}
		 \includegraphics[width=8cm,height=8cm]{AA2.pdf}
	\end{subfigure}
	 \begin{subfigure}[b]{0.45\textwidth}
		 \includegraphics[width=8cm,height=8cm]{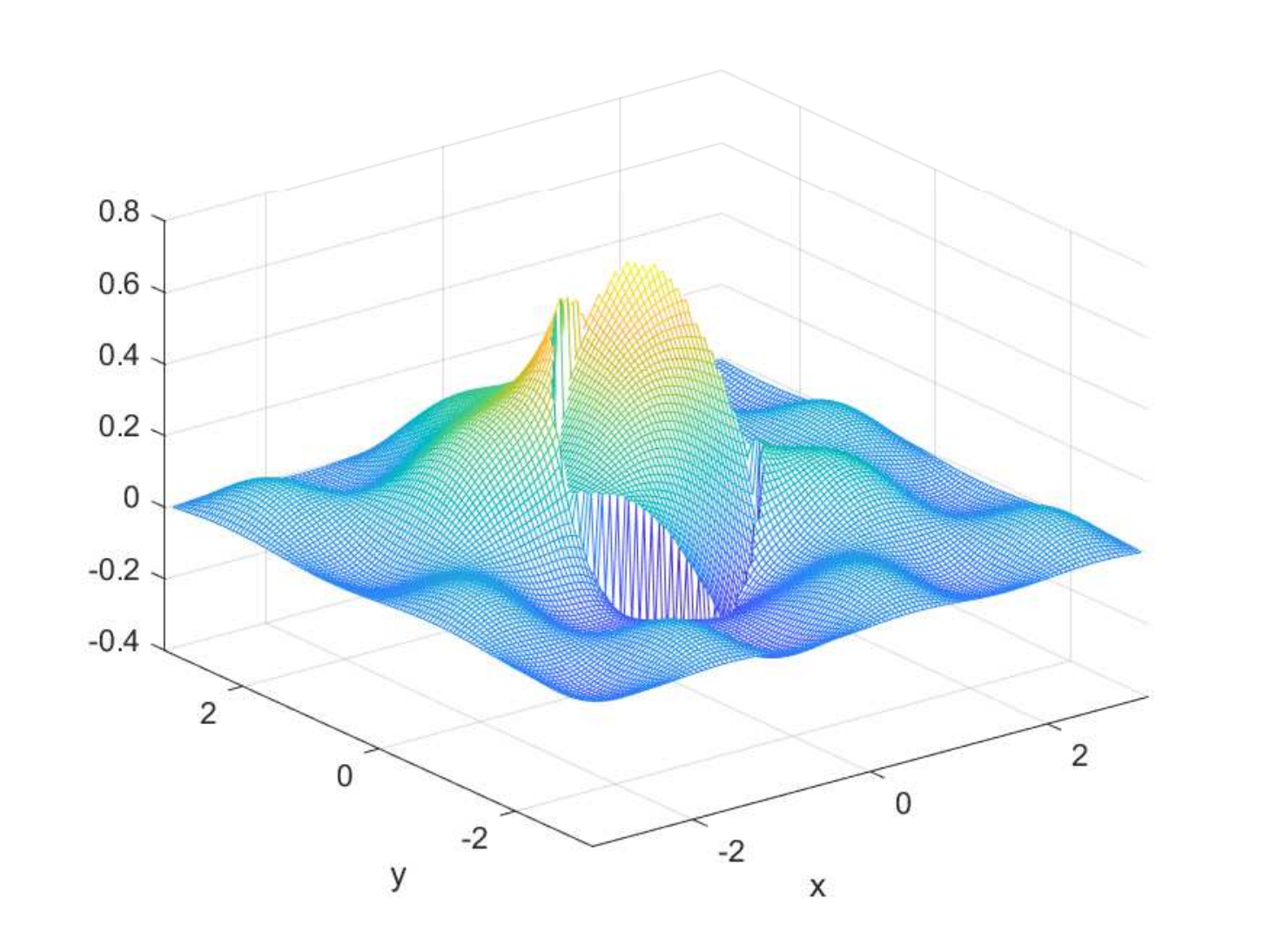}
	\end{subfigure}
	\caption
	{Top row for \cref{ex5}: the interface curve $\Gamma$ (left) and the numerical solution $u_h$ (right) with $h=2^{-7}\times 2\pi$. Bottom row for \cref{ex6}: the interface curve $\Gamma$ (left) and the numerical solution $u_h$ (right) with $h=2^{-7}\times 2\pi$.}
	\label{fig:figure3}
\end{figure}

\begin{example}\label{ex7}
\normalfont
Let $\Omega=(-\pi,\pi)^2$ and
the interface curve be given by
$\Gamma:=\{(x,y)\in \Omega \; :\; \psi(x,y)=0\}$ with
$\psi (x,y)=x^4+2y^4-2$. Note that $\Gamma \cap \partial \Omega=\emptyset$ and
the coefficients of \eqref{Qeques1} are given by
\begin{align*}
& f_{+}=f\chi_{\Op}=\sin(2x)\sin(2y),
\quad f_{-}=f\chi_{\Om}=\cos(2x-2y),\\
& g_0=0,
\quad g_1=-x^2,
\quad g=-y^2.
\end{align*}
The numerical results are provided in \cref{table:QSp6} and \cref{fig:figure4}.		 
\end{example}

\begin{example}\label{ex8}
\normalfont
Let $\Omega=(-\pi,\pi)^2$ and
the interface curve be given by
$\Gamma:=\{(x,y)\in \Omega \; :\; \psi(x,y)=0\}$ with
$\psi (x,y)=y^2-2x^2+x^4-1$. Note that $\Gamma \cap \partial \Omega=\emptyset$ and
the coefficients of \eqref{Qeques1} are given by
\begin{align*}
& f_{+}=f\chi_{\Op}=\sin(2x)\sin(3y),
\quad f_{-}=f\chi_{\Om}=\cos(2x)\sin(2y),\\
& g_0=0,
\quad g_1=0,
\quad g=-\exp(x-2y).
\end{align*}
Because $g_1=0$, the Poisson interface problem in \eqref{Qeques1} simply becomes $-\nabla^2 u=f-g\delta_\Gamma$ in $\Omega$ with the Dirichlet boundary condition $u|_{\partial \Omega}=g_0$.
The numerical results are provided in \cref{table:QSp6} and \cref{fig:figure4}.	 
\end{example}

\begin{table}[htbp]
	\caption{Performance in \cref{ex7} and  \cref{ex8} of the proposed sixth order compact finite difference scheme  in \cref{thm:regular,fluxtm2} on uniform Cartesian meshes with the same mesh size $h=2^{-J}\times 2\pi$.}
	\centering
	\setlength{\tabcolsep}{2.5mm}{
		 \begin{tabular}{c|c|c|c|c|c|c|c|c}
			\hline
			\multicolumn{1}{c|}{} &
			 \multicolumn{4}{c|}{\cref{ex7}} &
			 \multicolumn{4}{c}{\cref{ex8}} \\
			\cline{1-9}
$J$&   $\frac{\|u_{h}-u_{h/2}\|_2}{\|u_{h/2}\|_2}$    &order &   $\|u_{h}-u_{h/2}\|_\infty$    &order &   $\frac{\|u_{h}-u_{h/2}\|_2}{\|u_{h/2}\|_2}$    &order &   $\|u_{h}-u_{h/2}\|_{\infty}$    &order \\
			\hline
3   &5.05E+00   &0   &8.15E+00   &0   &2.88E+01   &0   &5.64E+02   &0\\
4   &5.56E-02   &6.502   &1.01E-01   &6.329   &4.25E-01   &6.083   &1.92E+01   &4.878\\
5   &1.39E-03   &5.328   &2.97E-03   &5.092   &2.41E-02   &4.144   &1.47E+00   &3.708\\
6   &2.78E-05   &5.637   &8.57E-05   &5.116   &1.41E-04   &7.413   &9.31E-03   &7.300\\
7   &1.55E-07   &7.485   &6.83E-07   &6.971   &8.88E-07   &7.313   &5.83E-05   &7.320\\
			\hline
	\end{tabular}}
	\label{table:QSp6}
\end{table}

\begin{figure}[htbp]
	\centering
	 \begin{subfigure}[b]{0.45\textwidth}
		 \includegraphics[width=8cm,height=8cm]{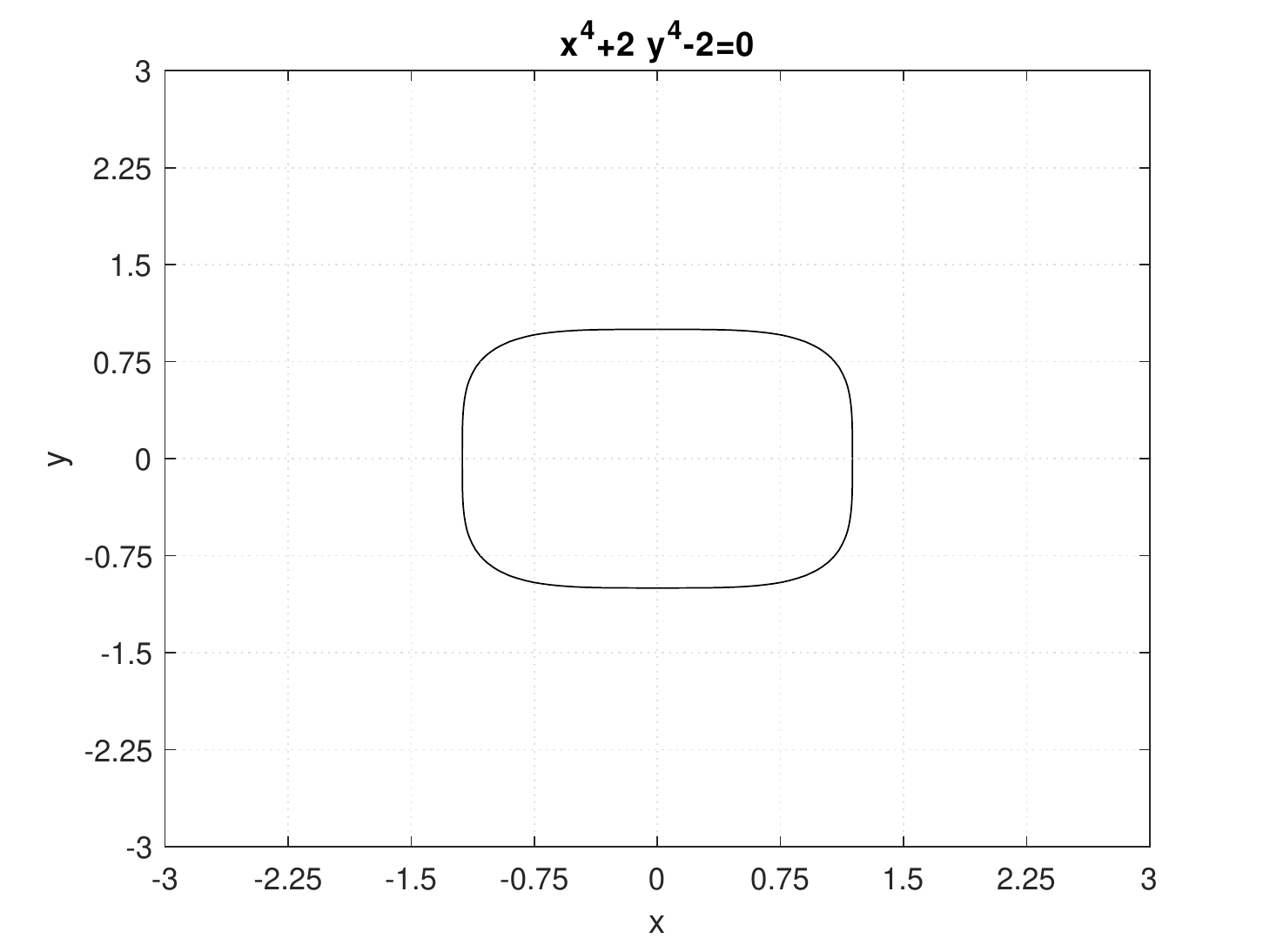}
	\end{subfigure}
	 \begin{subfigure}[b]{0.45\textwidth}
		 \includegraphics[width=8cm,height=8cm]{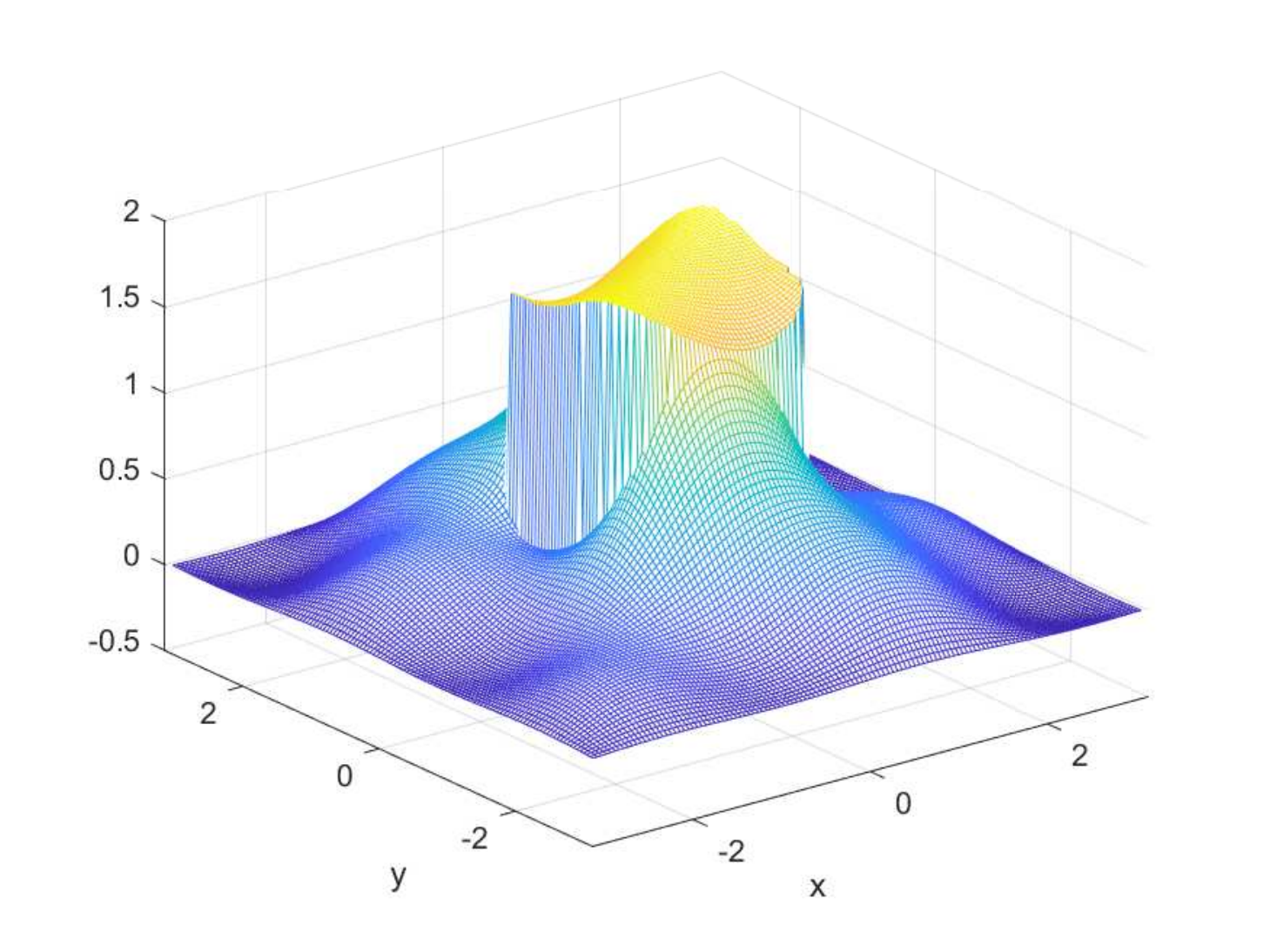}
	\end{subfigure}
	 \begin{subfigure}[b]{0.45\textwidth}
		 \includegraphics[width=8cm,height=8cm]{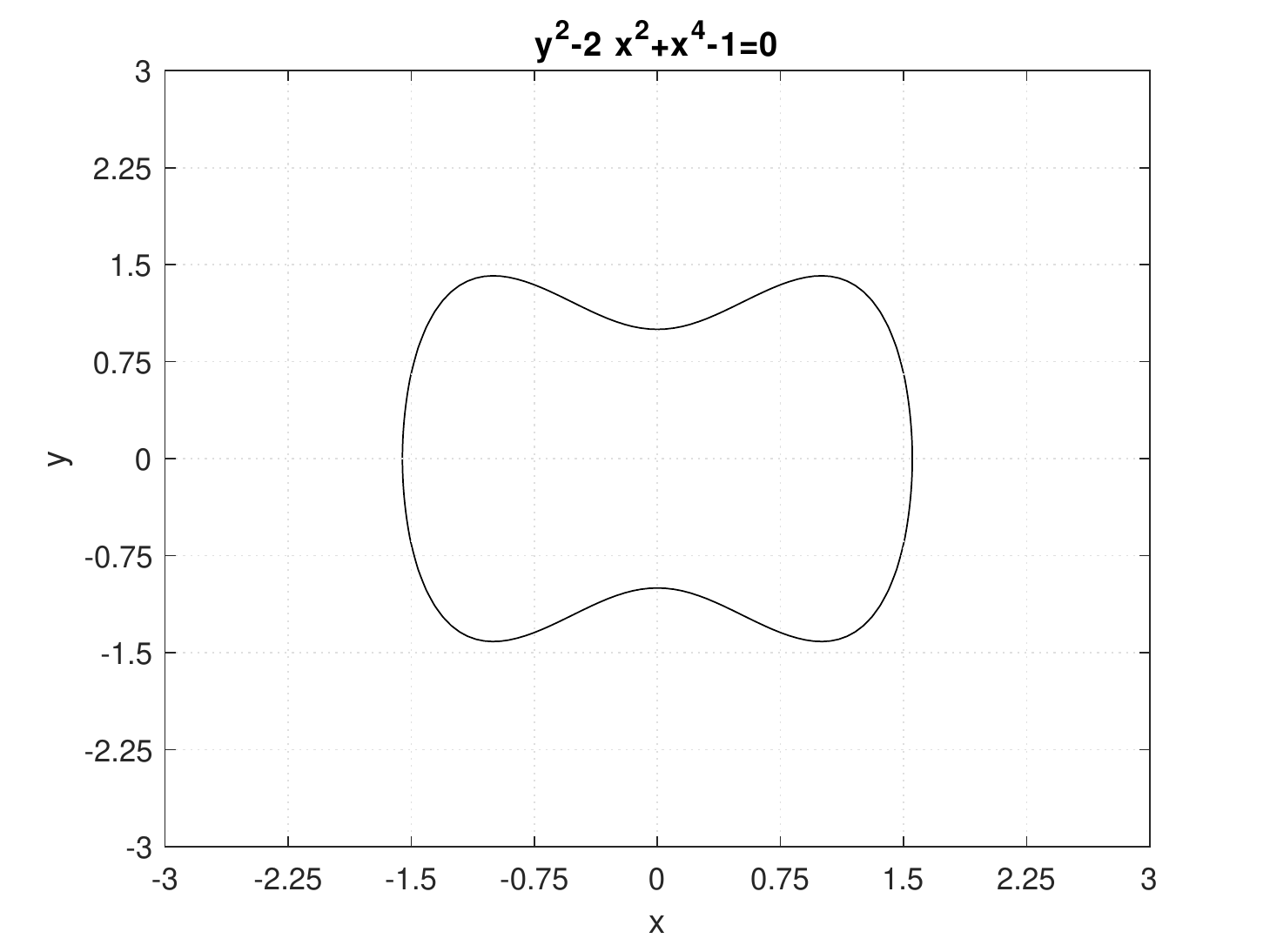}
	\end{subfigure}
	 \begin{subfigure}[b]{0.45\textwidth}
		 \includegraphics[width=8cm,height=8cm]{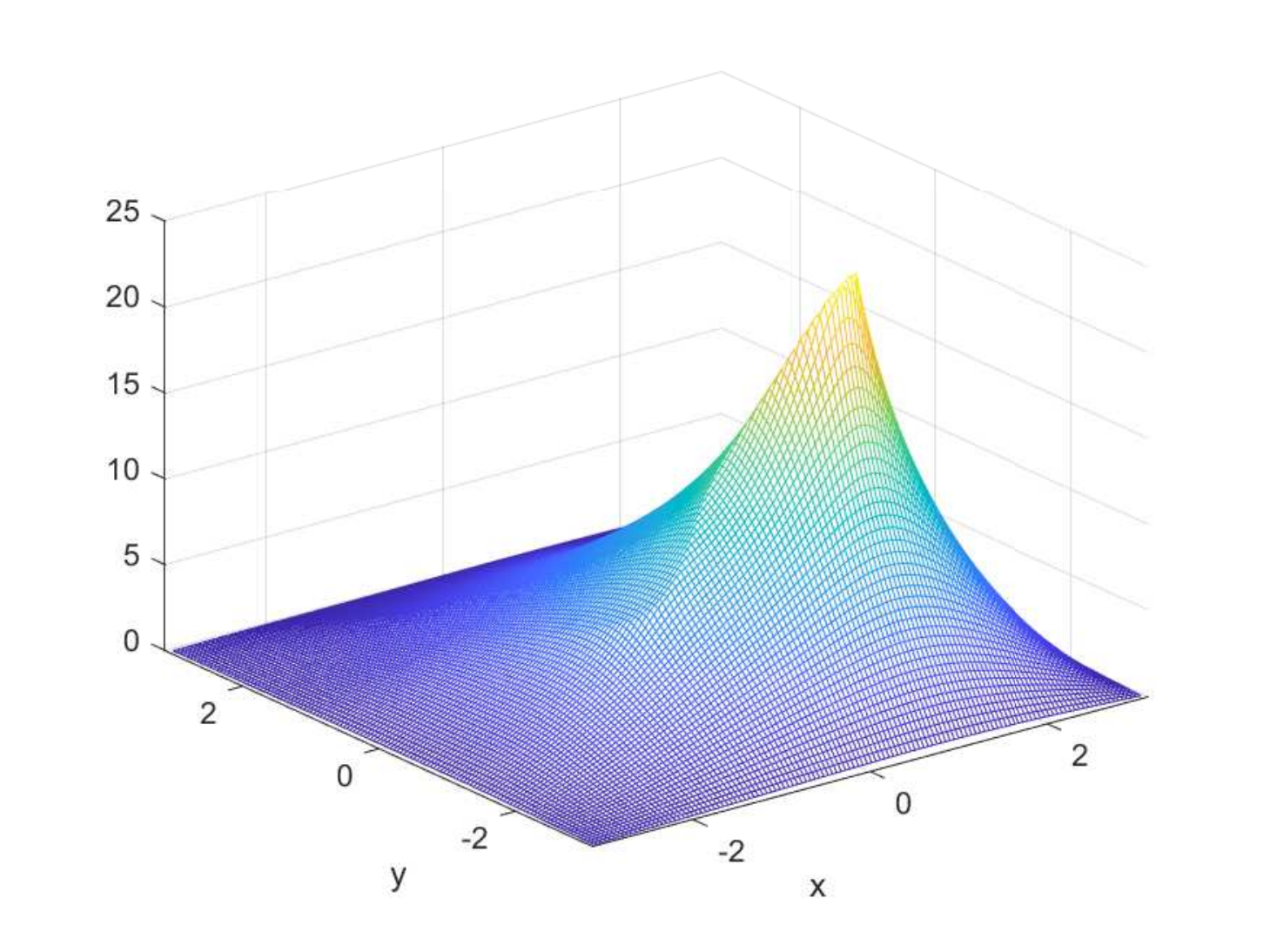}
	\end{subfigure}
	\caption
	{Top row for \cref{ex7}: the interface curve $\Gamma$ (left) and the numerical solution $u_h$ (right) with $h=2^{-7}\times 2\pi$. Bottom row for \cref{ex8}: the interface curve $\Gamma$ (left) and the numerical solution $u_h$ (right) with $h=2^{-7}\times 2\pi$.}
	\label{fig:figure4}
\end{figure}

\begin{example}\label{ex9}	
\normalfont
Let $\Omega=(-2,2)^2$ and
the interface curve be given by
$\Gamma:=\{(x,y)\in \Omega \; :\; \psi(x,y)=0\}$ with
$\psi (x,y)=2x^4+y^2-1/2$. Note that $\Gamma \cap \partial \Omega=\emptyset$ and
the coefficients of \eqref{Qeques1} are given by
\begin{align*}
&f_{+}=f\chi_{\Op}=\sin(2\pi x)\sin(2\pi y),
\qquad f_{-}=f\chi_{\Om}=\sin(\pi(x+2y)),\\
&g_0=0,
\quad g_1=-\sin(x+y),
\quad g=-\sin(2x)-\sin(2y).
\end{align*}
The numerical results are provided in \cref{table:QSp7} and \cref{fig:figure5}.		 
\end{example}

\subsection{Numerical examples with $u$ unknown and $\Gamma \cap \partial \Omega\ne\emptyset$}

In this subsection, we provide a few numerical experiments such that the exact solution $u$ of \eqref{Qeques1} is unknown and the interface curve $\Gamma$ touches the boundary of $\Omega$.

\begin{example}\label{ex10}
\normalfont
Let $\Omega=(-\pi,\pi)^2$ and
the interface curve be given by
$\Gamma:=\{(x,y)\in \Omega \; :\; \psi(x,y)=0\}$ with
$\psi (x,y)=y-\cos(x)$. Note that $\Gamma \cap \partial \Omega\ne\emptyset$ and
the coefficients of \eqref{Qeques1} are given by
\begin{align*}
& f_{+}=f\chi_{\Op}=-\sin(x)\sin(3y),
\quad f_{-}=f\chi_{\Om}=-\sin(2x)\sin(y),\\
& g_0=0,
\quad g_1=0,
\quad g=\sin(x).
\end{align*}
Because $g_1=0$, the Poisson interface problem in \eqref{Qeques1} simply becomes $-\nabla^2 u=f-g\delta_\Gamma$ in $\Omega$ with the Dirichlet boundary condition $u|_{\partial \Omega}=g_0$.
The numerical results are provided in \cref{table:QSp7} and \cref{fig:figure5}.
\end{example}

\begin{table}[htbp]
	\caption{Performance in \cref{ex9} and  \cref{ex10} of the proposed  sixth order compact finite difference scheme  in \cref{thm:regular,fluxtm2} on uniform Cartesian meshes with the mesh sizes $h=2^{-J}\times 4$ and $h=2^{-J}\times 2\pi$, respectively.}
	\centering
	\setlength{\tabcolsep}{2.5mm}{
		 \begin{tabular}{c|c|c|c|c|c|c|c|c}
			\hline
			\multicolumn{1}{c|}{} &
			 \multicolumn{4}{c|}{\cref{ex9}} &
			 \multicolumn{4}{c}{\cref{ex10}} \\
			\cline{1-9}
$J$&   $\frac{\|u_{h}-u_{h/2}\|_2}{\|u_{h/2}\|_2}$    &order &   $\|u_{h}-u_{h/2}\|_\infty$    &order &   $\frac{\|u_{h}-u_{h/2}\|_2}{\|u_{h/2}\|_2}$    &order &   $\|u_{h}-u_{h/2}\|_{\infty}$    &order \\
			\hline
3   &6.36E+00   &0   &5.46E+00   &0   &3.21E+00   &0   &2.71E+00   &0\\
4   &8.46E-02   &6.232   &6.56E-02   &6.379   &2.80E-02   &6.839   &2.37E-02   &6.840\\
5   &7.95E-04   &6.734   &8.20E-04   &6.322   &2.01E-04   &7.120   &1.83E-04   &7.017\\
6   &3.86E-06   &7.688   &5.70E-06   &7.169   &1.30E-06   &7.270   &1.23E-06   &7.211\\
7   &7.95E-08   &5.600   &8.84E-08   &6.011   &7.62E-09   &7.418   &7.61E-09   &7.340\\
			\hline
	\end{tabular}}
	\label{table:QSp7}
\end{table}

\begin{figure}[htbp]
	\centering
	 \begin{subfigure}[b]{0.45\textwidth}
		 \includegraphics[width=8cm,height=8cm]{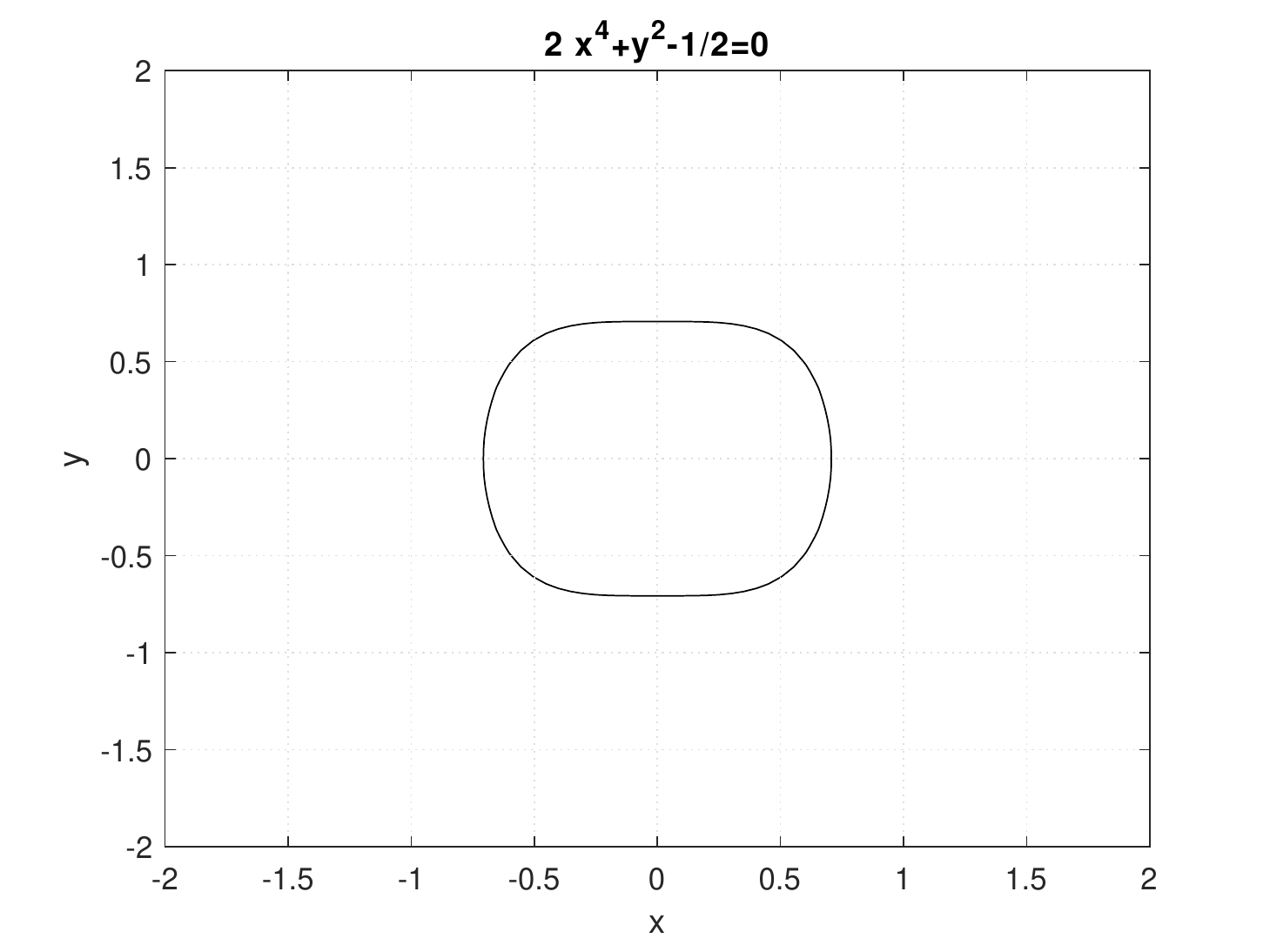}
	\end{subfigure}
	 \begin{subfigure}[b]{0.45\textwidth}
		 \includegraphics[width=8cm,height=8cm]{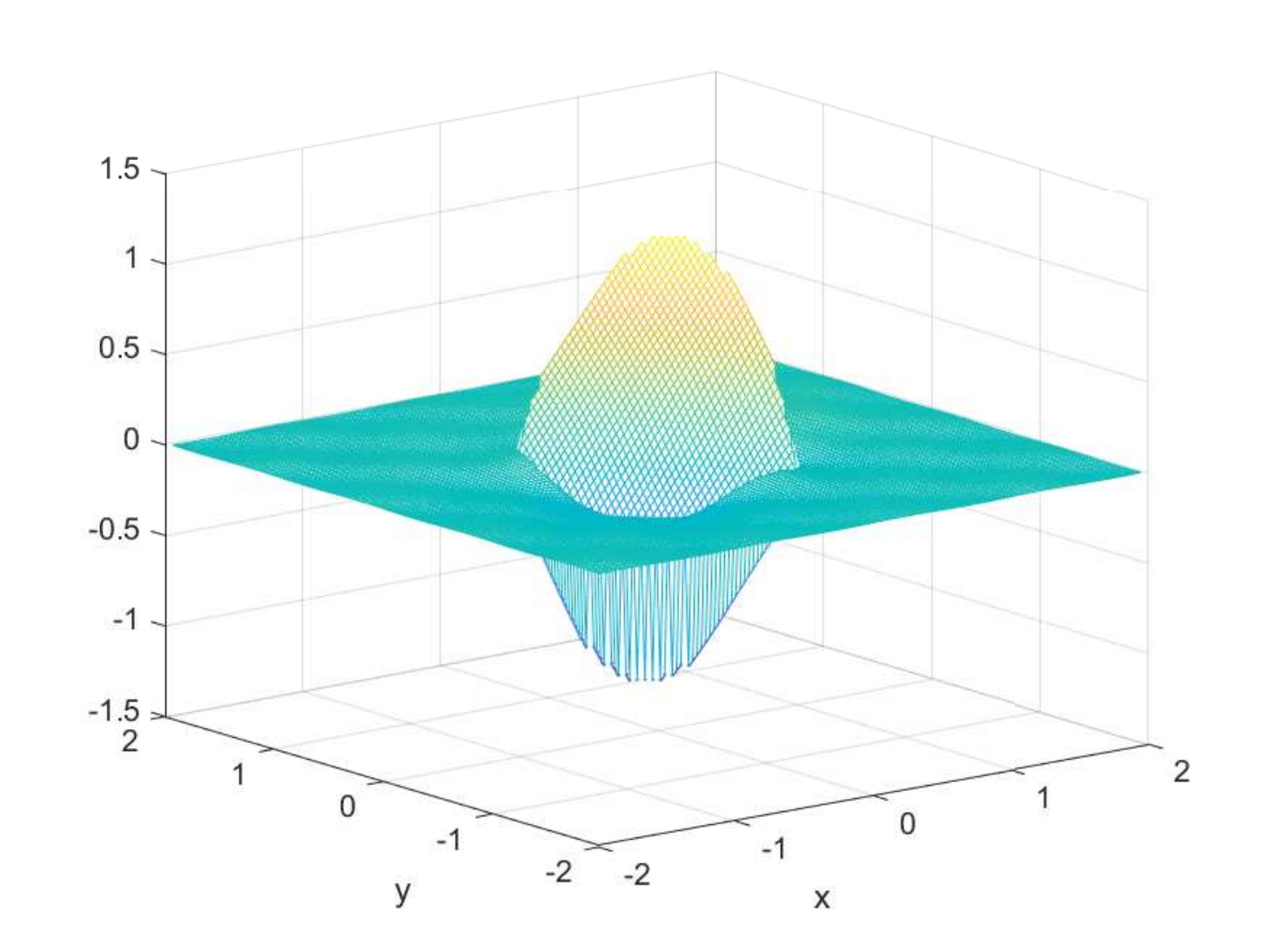}
	\end{subfigure}
	 \begin{subfigure}[b]{0.45\textwidth}
		 \includegraphics[width=8cm,height=8cm]{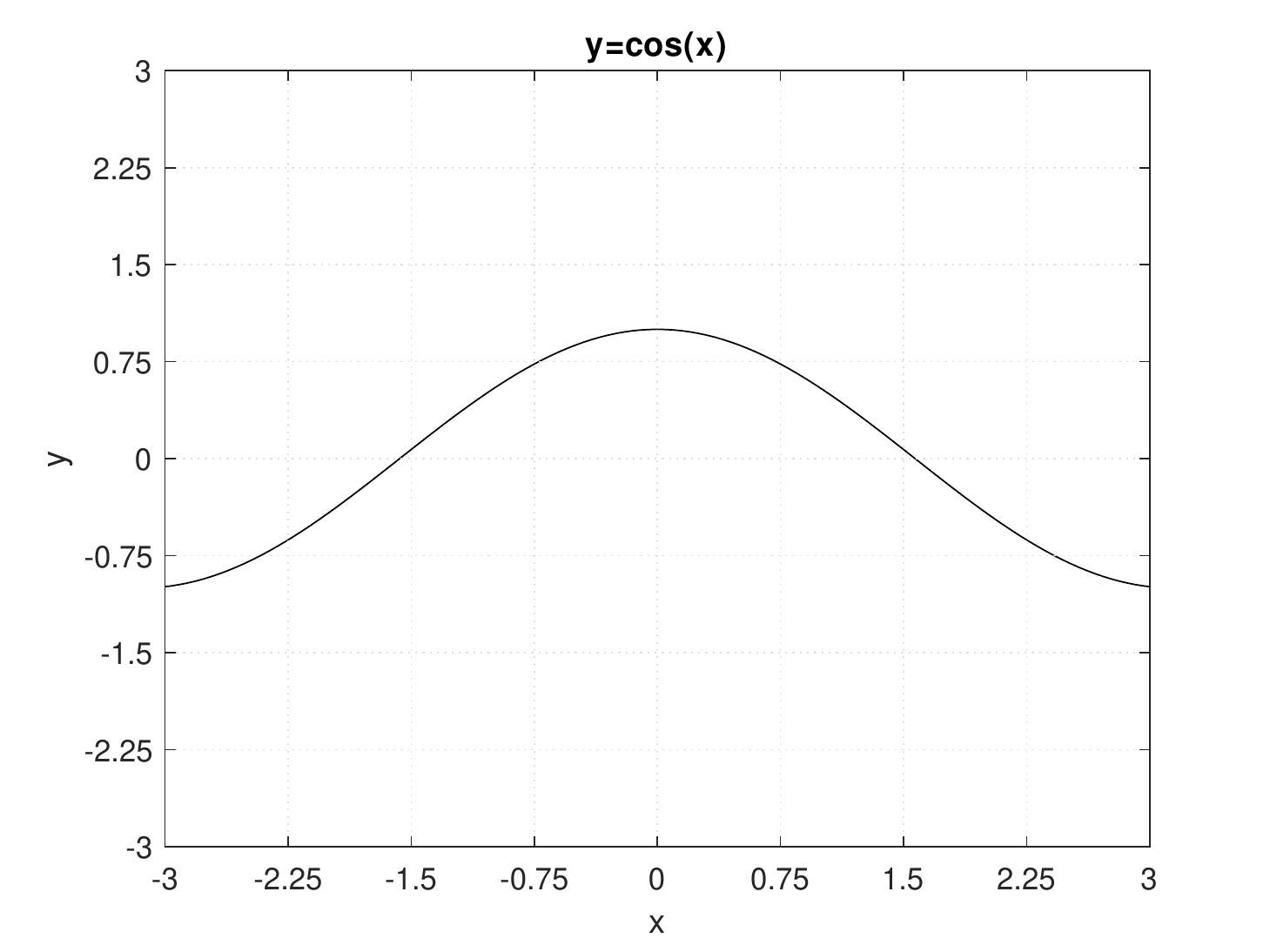}
	\end{subfigure}
	 \begin{subfigure}[b]{0.45\textwidth}
		 \includegraphics[width=8cm,height=8cm]{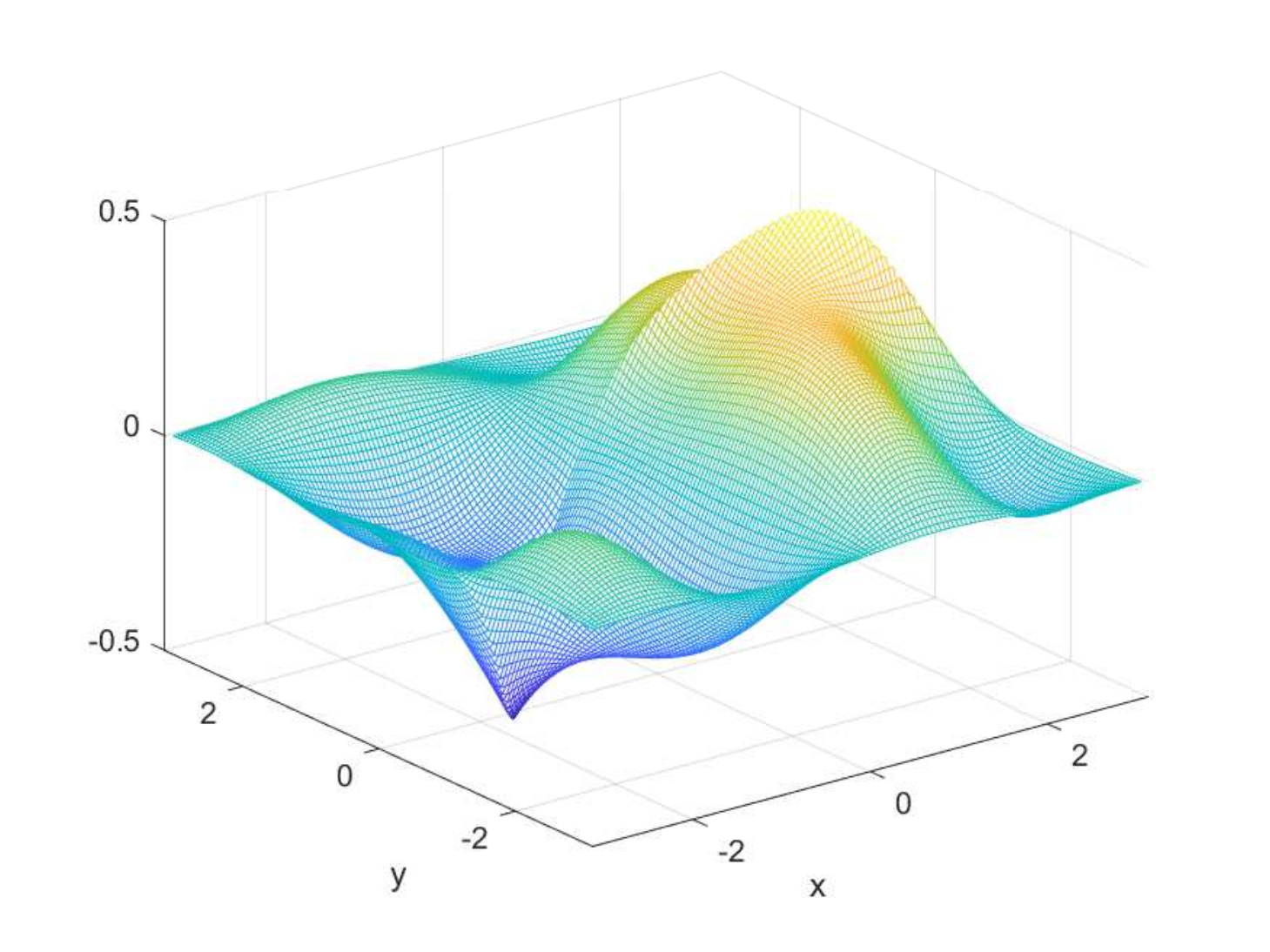}
	\end{subfigure}
	\caption
	{Top row for \cref{ex9}: the interface curve $\Gamma$ (left) and the numerical solution $u_h$ (right) with $h=2^{-7}\times 4$. Bottom row for \cref{ex10}: the interface curve $\Gamma$ (left) and the numerical solution $u_h$ (right) with $h=2^{-7}\times 2\pi$.}
	\label{fig:figure5}
\end{figure}

\begin{example}\label{ex11}
\normalfont
Let $\Omega=(0,3.5)^2$ and
the interface curve be given by
$\Gamma:=\{(x,y)\in \Omega \; :\; \psi(x,y)=0\}$ with
$\psi (x,y)=\frac{x^2}{2}+\frac{y^2}{2}-2$. Note that $\Gamma \cap \partial \Omega\ne\emptyset$ and
the coefficients of \eqref{Qeques1} are given by
\begin{align*}
&f_{+}=f\chi_{\Op}=\sin(\pi x)\sin(2\pi y),
\quad f_{-}=f\chi_{\Om}=\sin(2\pi x)\sin(\pi y),\\
& g_0=0,
\quad g_1=0,
\quad g=-\sin(2\pi x)\sin(2\pi y).
\end{align*}
Because $g_1=0$, the Poisson interface problem in \eqref{Qeques1} simply becomes $-\nabla^2 u=f-g\delta_\Gamma$ in $\Omega$ with the Dirichlet boundary condition $u|_{\partial \Omega}=g_0$.
The numerical results are provided in \cref{table:QSp8} and \cref{fig:figure6}.		 
\end{example}

\begin{example}\label{ex12}
\normalfont
Let $\Omega=(-\pi,\pi)^2$ and
the interface curve be given by
$\Gamma:=\{(x,y)\in \Omega \; :\; \psi(x,y)=0\}$ with
$\psi (x,y)=y-\sin(x)$. Note that $\Gamma \cap \partial \Omega\ne\emptyset$ and
the coefficients of \eqref{Qeques1} are given by
\begin{align*}
& f_{+}=f\chi_{\Op}=\sin(x)\sin(3y),
\quad f_{-}=f\chi_{\Om}=\sin(2x)\sin(y),\\
& g_0=0,
\quad g_1=0,
\quad g=-\sin(2x).
\end{align*}	
Because $g_1=0$, the Poisson interface problem in \eqref{Qeques1} simply becomes $-\nabla^2 u=f-g\delta_\Gamma$ in $\Omega$ with the Dirichlet boundary condition $u|_{\partial \Omega}=g_0$.
The numerical results are provided in \cref{table:QSp8} and \cref{fig:figure6}.		 
\end{example}

\begin{table}[htbp]
	\caption{Performance in \cref{ex11} and  \cref{ex12} of the proposed sixth order compact finite difference scheme  in \cref{thm:regular,fluxtm2} on uniform Cartesian meshes with $h=2^{-J}\times 3.5$ and $h=2^{-J}\times 2\pi$ respectively.}
	\centering
	\setlength{\tabcolsep}{2.5mm}{
		 \begin{tabular}{c|c|c|c|c|c|c|c|c}
			\hline
			\multicolumn{1}{c|}{} &
			 \multicolumn{4}{c|}{\cref{ex11}} &
			 \multicolumn{4}{c}{\cref{ex12}} \\
			\cline{1-9}
$J$&   $\frac{\|u_{h}-u_{h/2}\|_2}{\|u_{h/2}\|_2}$    &order &   $\|u_{h}-u_{h/2}\|_\infty$    &order &   $\frac{\|u_{h}-u_{h/2}\|_2}{\|u_{h/2}\|_2}$    &order &   $\|u_{h}-u_{h/2}\|_{\infty}$    &order \\
			\hline
3   &6.16E+00   &0   &3.94E-01   &0   &2.38E+00   &0   &9.54E-01   &0\\
4   &5.82E-02   &6.726   &4.65E-03   &6.406   &3.94E-03   &9.236   &2.36E-03   &8.662\\
5   &5.07E-04   &6.844   &3.43E-05   &7.085   &8.37E-05   &5.556   &8.19E-05   &4.846\\
6   &4.07E-06   &6.959   &3.13E-07   &6.773   &7.57E-06   &3.468   &1.57E-05   &2.382\\
7   &2.38E-07   &4.098   &6.00E-08   &2.383   &1.54E-06   &2.301   &6.39E-06   &1.299\\
			\hline
	\end{tabular}}
	\label{table:QSp8}
\end{table}

\begin{figure}[htbp]
	\centering
   \begin{subfigure}[b]{0.45\textwidth}
		 \includegraphics[width=8cm,height=8cm]{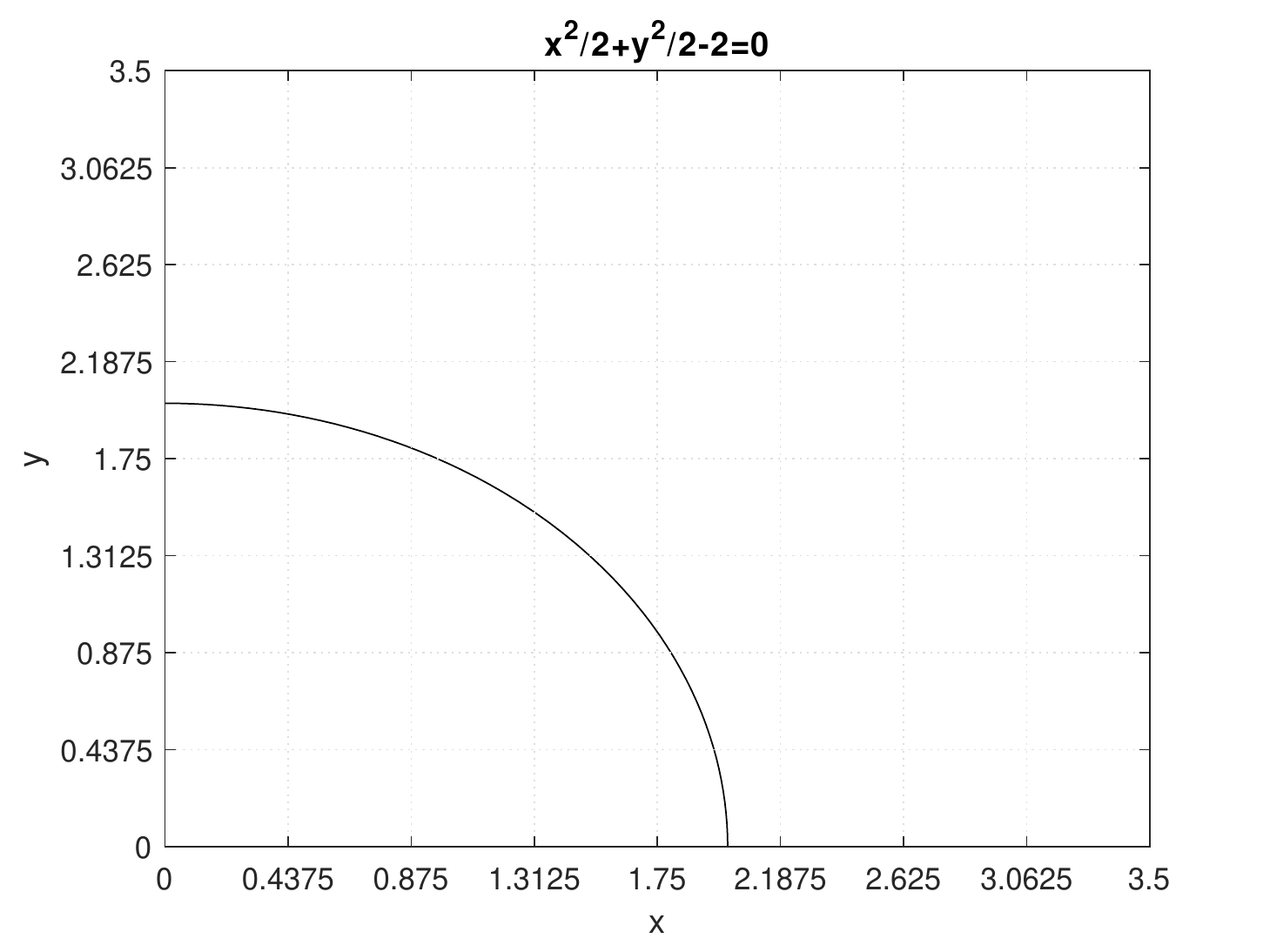}
	\end{subfigure}
	 \begin{subfigure}[b]{0.45\textwidth}
		 \includegraphics[width=8cm,height=8cm]{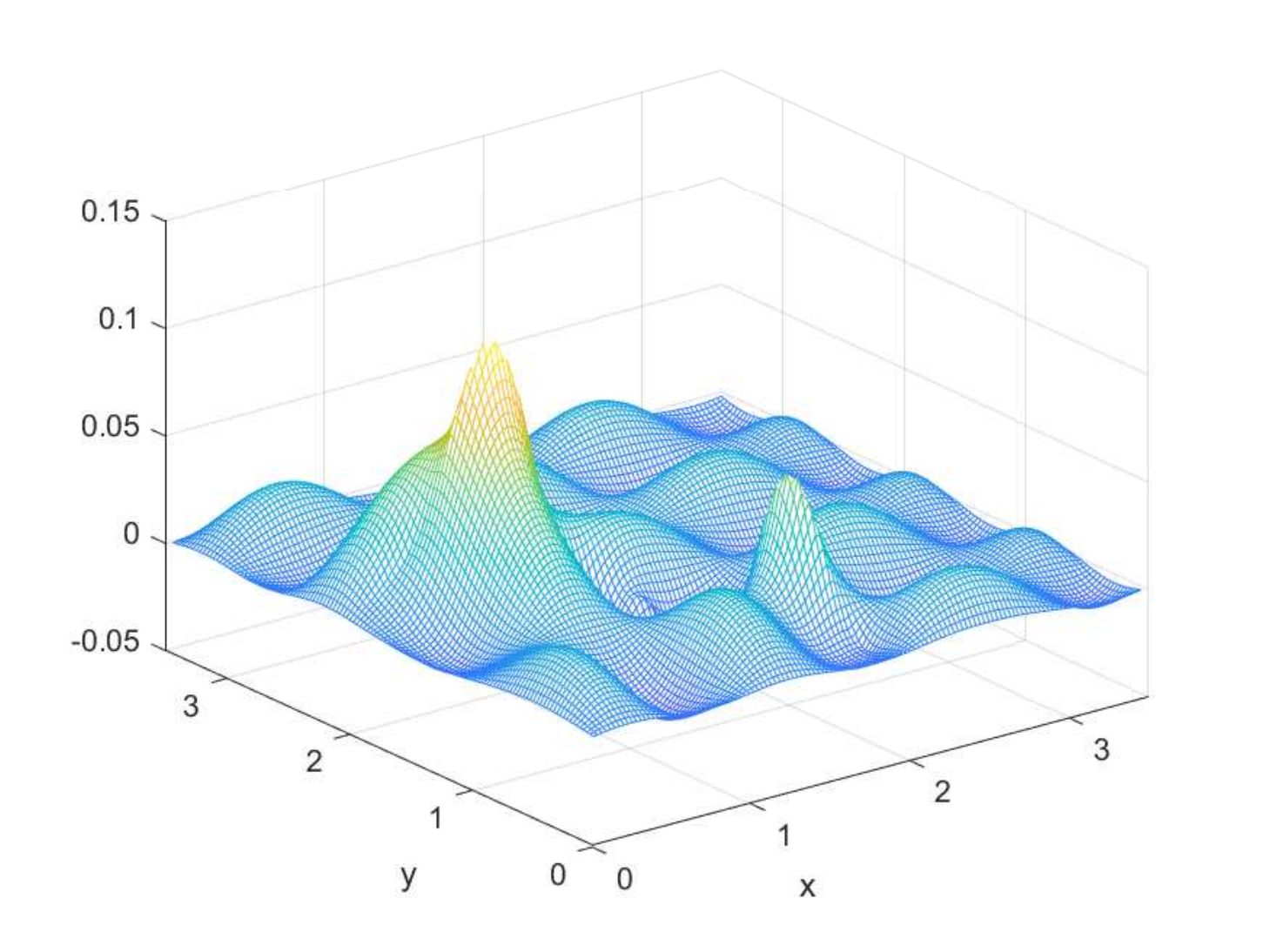}
	\end{subfigure}	
	 \begin{subfigure}[b]{0.45\textwidth}
		 \includegraphics[width=8cm,height=8cm]{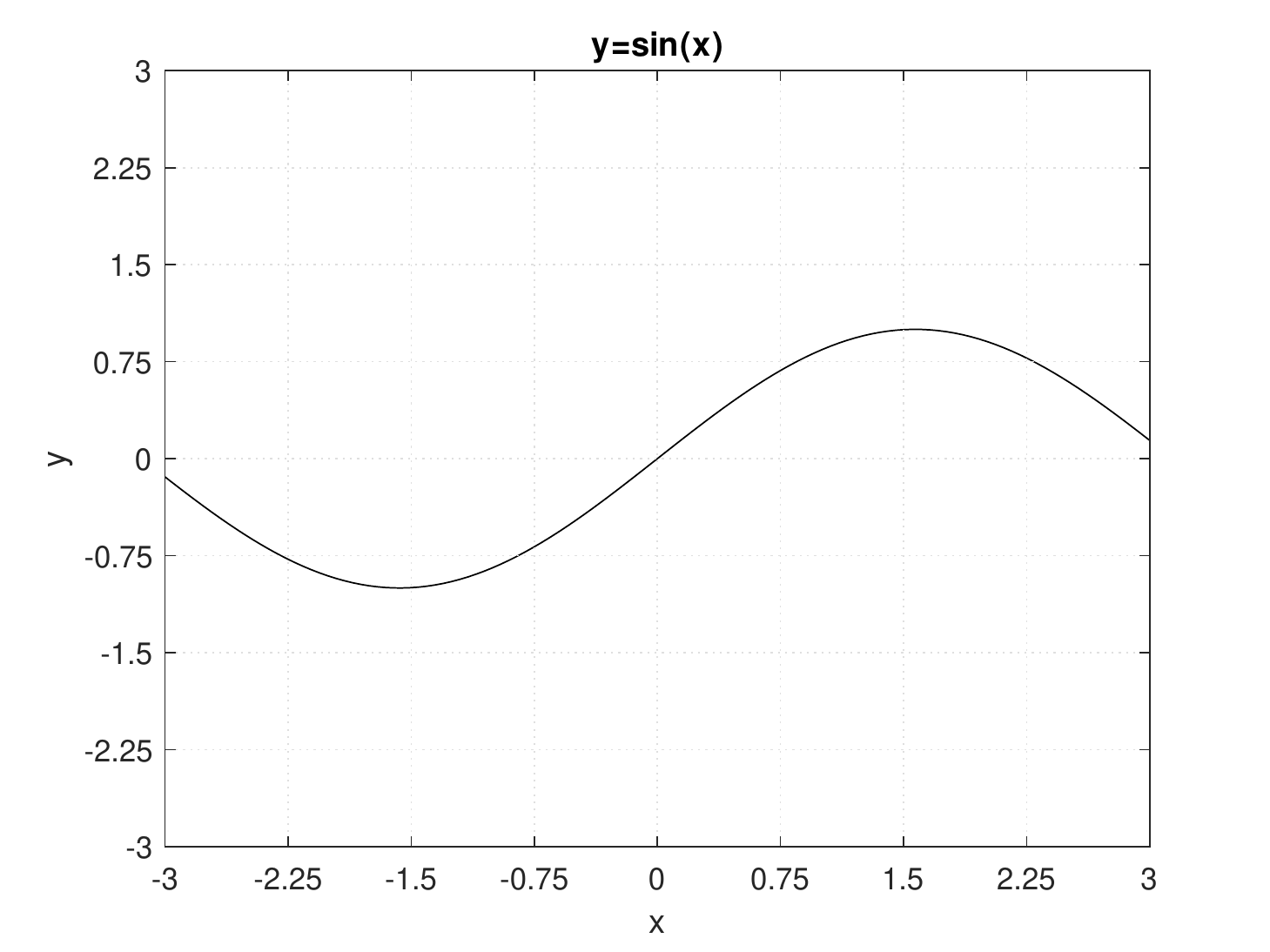}
	\end{subfigure}
	 \begin{subfigure}[b]{0.45\textwidth}
		 \includegraphics[width=8cm,height=8cm]{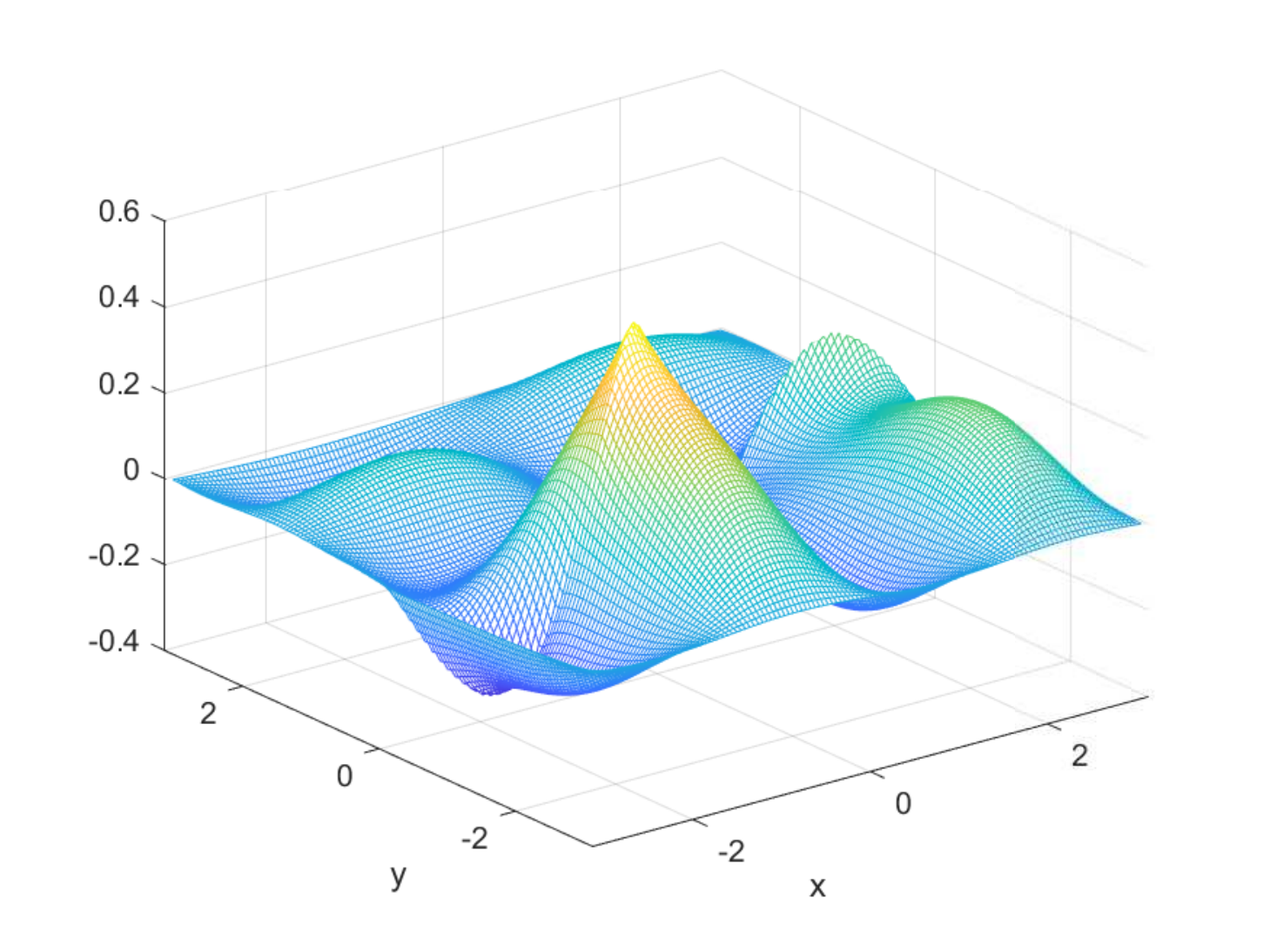}
	\end{subfigure}	
	\caption
	{Top row for \cref{ex11}: the interface curve $\Gamma$ (left) and the numerical solution $u_h$ (right) with $h=2^{-7}\times 3.5$. Bottom row for \cref{ex12}: the interface curve $\Gamma$ (left) and the numerical solution $u_h$ (right) with $h=2^{-7}\times 2\pi$.}
		\label{fig:figure6}
\end{figure}

\begin{remark} \normalfont
	For \cref{ex12}, the interface $\Gamma$ is $y=\sin(x)$ which is shown in \cref{fig:figure6}. Since $\Gamma \cap \partial \Omega\ne\emptyset$ and the angle between $\Gamma$ and $\partial \Omega$ is not $\pi/2$, the solution will contain a singular function which will affect the convergence rate which is shown in \cref{table:QSp8}.
\end{remark}

\section{Conclusion}\label{sec:Conclu}

To our best knowledge, so far there were no compact finite difference schemes available in the literature, that can achieve fifth or sixth order
for Poisson interface problems with singular source terms \eqref{Qeques1}.
Our contribution of this paper is that, we construct the sixth order compact finite difference schemes on uniform meshes for \eqref{Qeques1} with two non-homogeneous jump conditions and provide
explicit formulas for the coefficients of the linear equations. The explicit formulas are independent on how the interface curve partitions the nine points in a stencil,  so one can handle the $72$ different cases configurations of the nine-point stencil with respect to the interface. The matrix $A$ of the linear equations $Ax=b$, appearing after the discretization, is fixed for any source terms, two jump conditions and interface curves, and this allows for an easy
design of preconditioners if iterative methods are used for the solution of the linear system associated with interface problems.  It also allows to perform a single LU decomposition if a direct method is to be used, and solve the problem with multiple source terms/ interface conditions efficiently. This is particularly useful in case of moving boundary problems. Our numerical experiments confirm the flexibility and the sixth order accuracy in $l_2$ and $l_{\infty}$ norms of the proposed schemes.

\appendix

\section{Proof of \cref{thm:interface}}
\label{sec:proof}

\begin{proof}
Since the tangent vector at $t$ of the curve $\Gamma$ parameterized by \eqref{parametric} is given by $(x',y')=(r'(t),s'(t))$,
the unit normal vector $\nv(r(t)+x_i^*,s(t)+y_j^*)$ at the point $(r(t)+x_i^*,s(t)+y_j^*)$ pointing from $\Om$ to $\Op$ is given by one of
\[
\nv(r(t)+x_i^*,s(t)+y_j^*)=\pm\frac{(y',-x')}{\sqrt{(r'(t))^2+(s'(t))^2}}= \pm\frac{(s'(t),-r'(t))}{\sqrt{(r'(t))^2+(s'(t))^2}}.
\]
Let us firstly consider
\be \label{nvector}
\nv(r(t)+x_i^*,s(t)+y_j^*)=\frac{(s'(t),-r'(t))}{\sqrt{(r'(t))^2+(s'(t))^2}}.
\ee
Now we shall use the interface conditions in \eqref{Qeques1}. Plugging the parametric equation in \eqref{parametric} into the interface condition $[u]=g_1$ on $\Gamma$,
near the base point $(x_i^*,y_j^*)$ we have
\be \label{interface:u}
u_+(r(t)+x_i^*,s(t)+y_j^*)-u_-(r(t)+x_i^*,s(t)+y_j^*)=g_1(r(t)+x_i^*,s(t)+y_j^*),
\ee
for $t\in (-\epsilon,\epsilon)$.
Similarly, for flux, we have
\small{
\begin{align*}
(\nabla u_+)(r(t)+x_i^*,s(t)+y_j^*)\cdot \nv(r(t)+x_i^*&,s(t)+y_j^*)-(\nabla u_-)(r(t)+x_i^*,s(t)+y_j^*)\cdot \nv(r(t)+x_i^*,s(t)+y_j^*)\\
&=g(r(t)+x_i^*,s(t)+y_j^*),
\end{align*}
}
for $t\in (-\epsilon,\epsilon)$.
Using the unit norm vector in
\eqref{nvector}, the above relation becomes
\be \label{interface:flux}
\begin{split}
\big((\nabla u_+)(r(t)+x_i^*,s(t)+y_j^*)-
&(\nabla u_-)(r(t)+x_i^*,s(t)+y_j^*)\big) \cdot (s'(t),-r'(t))\\
&=g(r(t)+x_i^*,s(t)+y_j^*)\sqrt{(r'(t))^2+(s'(t))^2},
\end{split}
\ee
for $t\in (-\epsilon,\epsilon)$.
Since all involved functions in \eqref{interface:u} and \eqref{interface:flux} are assumed to be smooth,
to link the two sets $u_+^{(m,n)}$ and $u_-^{(m,n)}$ for $(m,n)\in \ind_{M+1}^1$,
we now take the Taylor approximation of the above functions near the base parameter $t=0$.
Using the identity in \eqref{u:approx:ir:key}, we have
\begin{align*}
&u_\pm (r(t)+x_i^*,s(t)+y_j^*)\\
&=\sum_{(m,n)\in \ind_{M+1}^1}
u_\pm^{(m,n)} G_{m,n}(r(t),s(t))
+\sum_{(m,n)\in \ind_{M-1}} f_\pm^{(m,n)}
H_{m,n}(r(t), s(t))+\bo(t^{M+2})\\
&=\sum_{p=0}^{M+1}
\left(\sum_{(m,n)\in \ind_{M+1}^1}
u_\pm^{(m,n)} g_{m,n,p}
+\sum_{(m,n)\in \ind_{M-1}} f_\pm^{(m,n)}
h_{m,n,p} \right) t^p+\bo(t^{M+2}),
\end{align*}
where the constants $g_{m,n,p}$ and $h_{m,n,p}$ only depend on $r^{(\ell)}(0)$ and $s^{(\ell)}(0)$ for $\ell=0,\ldots,M+1$, and are uniquely determined by
\[
G_{m,n}(r(t),s(t))-\sum_{p=0}^{M+1} g_{m,n,p} t^p=\bo(t^{M+2})
\quad\mbox{and}\quad
H_{m,n}(r(t),s(t))-\sum_{p=0}^{M+1} h_{m,n,p} t^p=\bo(t^{M+2}),\quad t\to0.
\]
More precisely,
\be\label{gmnhmn}
g_{m,n,p}:=\frac{1}{p!} \frac{d^p(G_{m,n}(r(t),s(t)))}{dt^p}\Big|_{t=0},\qquad h_{m,n,p}:=\frac{1}{p!}\frac{d^p(H_{m,n}(r(t),s(t)))}{dt^p}\Big|_{t=0},\quad p=0,\ldots,M+1.
\ee
Similarly, we have
\begin{align*}
g_1(r(t)+x_i^*,s(t)+y_j^*)
&=\sum_{(m,n)\in \ind_{M+1}} \frac{g_1^{(m,n)}}{m!n!} (r(t))^m (s(t))^n
+\bo(t^{M+2})\\
&=\sum_{p=0}^{M+1}\left(\sum_{(m,n)\in \ind_{M+1}} \frac{g_1^{(m,n)}}{m!n!} r_{m,n,p}\right) t^p+\bo(t^{M+2}),
\end{align*}
where the constants $r_{m,n,p}:=\frac{1}{p!}\frac{d^p ((r(t))^m (s(t))^n)}{d t^p}\Big|_{t=0}$ for $p=0,\ldots,M+1$, or equivalently,
\[
(r(t))^m (s(t))^n-\sum_{p=0}^{M+1} r_{m,n,p} t^p=\bo(t^{M+2}),\qquad t\to 0.
\]
Since $G_{m,n}$ is a homogeneous polynomial of degree $m+n$ and because $r(0)=s(0)=0$,
we must have $g_{m,n,p}=0$ for all $0\le p<m+n$ by \eqref{gmnhmn}. Define
\be \label{Umn}
U^{(m,n)}:=u_+^{(m,n)}-u_-^{(m,n)},\qquad (m,n)\in \ind_{M+1}^1.
\ee
Consequently, we deduce from \eqref{interface:u} that
\be \label{interface:u:01}
\sum_{(m,n)\in \ind_{M+1}^1}
U^{(m,n)}g_{m,n,p}
=\sum_{(m,n)\in \ind_{M+1}^1}\big(u_+^{(m,n)}-u_-^{(m,n)}\big)g_{m,n,p}
=F_p,\qquad p=0,\ldots,M+1,
\ee
where $F_0:=g_1^{(0,0)}$ and
\[
F_p:=\sum_{(m,n)\in \ind_{M-1}} \left(f_-^{(m,n)}-f_+^{(m,n)}\right)
h_{m,n,p}
+\sum_{(m,n)\in \ind_{M+1}}
\frac{g_1^{(m,n)}}{m!n!} r_{m,n,p}, \qquad p=1,\ldots,M+1.
\]
Note that $g_{0,0,0}=1$ and $g_{m,n,p}=0$ for all $0\le p< m+n$. We observe that the identities in \eqref{interface:u:01} can be equivalently rewritten as
%
\be \label{interface:u:0}
U^{(0,0)}
=F_0=g_1^{(0,0)},
\ee
and
\be \label{interface:u:1}
U^{(0,p)}g_{0,p,p}+U^{(1,p-1)}g_{1,p-1,p}
=F_p-
\sum_{(m,n)\in \ind_{M+1}^1, m+n<p}
U^{(m,n)} g_{m,n,p},\qquad p=1,\ldots,M+1.
\ee
On the other hand, we obtain from \eqref{u:approx:ir:key} that
\be \label{u:approx:ir:grad}
\nabla u_\pm (x+x_i^*,y+y_j^*)
=\sum_{(m,n)\in \ind_{M+1}^1}
u_\pm^{(m,n)} \nabla G_{m,n}(x,y) +\sum_{(m,n)\in \ind_{M-1}}
f_\pm ^{(m,n)} \nabla H_{m,n}(x,y)+\bo(h^{M+1}),
\ee
for $x,y\in (-2h,2h)$.
Using \eqref{u:approx:ir:grad} and a similar argument, we have
\small{
\begin{align*}
&\nabla u_\pm (r(t)+x_i^*, s(t)+y_j^*)\cdot(s'(t),-r'(t))\\
&\quad =\sum_{(m,n)\in \ind_{M+1}^1}
u_\pm^{(m,n)} \nabla G_{m,n}(r(t),s(t)) \cdot(s'(t),-r'(t))
+\sum_{(m,n)\in \ind_{M-1}}
f_\pm ^{(m,n)} \nabla H_{m,n}(r(t),s(t))\cdot(s'(t),-r'(t))\\
&\quad =
\sum_{p=0}^{M}
\left(
\sum_{(m,n)\in \ind_{M+1}^1}
u_\pm^{(m,n)} \tilde{g}_{m,n,p}
+\sum_{(m,n)\in \ind_{M-1}}
f_\pm ^{(m,n)} \tilde{h}_{m,n,p}
\right) t^p+\bo(t^{M+1}),
\end{align*}
}
where the constants $\tilde{g}_{m,n,p}$ and $\tilde{h}_{m,n,p}$ are uniquely determined by
\begin{align*}
&\nabla G_{m,n}(r(t),s(t)) \cdot(s'(t),-r'(t)) -\sum_{p=0}^{M} \tilde{g}_{m,n,p} t^p=\bo(t^{M+1}),\quad t\to 0,\\
&\nabla H_{m,n}(r(t),s(t)) \cdot(s'(t),-r'(t)) -\sum_{p=0}^{M} \tilde{h}_{m,n,p} t^p=\bo(t^{M+1}), \quad t\to 0.
\end{align*}
More precisely, for $p=0,\ldots,M$,
\be\label{gmnhmn2}
\tilde{g}_{m,n,p}:=\frac{1}{p!}\frac{d^p(\nabla G_{m,n}(r(t),s(t)) \cdot(s'(t),-r'(t)))}{dt^p}\Big|_{t=0},
\ee
\be\label{gmnhmn3}
\tilde{h}_{m,n,p}:=\frac{1}{p!}\frac{d^p(\nabla H_{m,n}(r(t),s(t)) \cdot(s'(t),-r'(t)))}{dt^p}\Big|_{t=0}.
\ee
Note that each entry of $\nabla G_{m,n}$ is a homogeneous polynomial of degree $m+n-1$. By $r(0)=s(0)=0$ and \eqref{gmnhmn2}, we observe that $\tilde{g}_{m,n,p}=0$ for all $0\le p<m+n-1$.
Similarly, we have
\begin{align*}
g(r(t)+x_i^*,s(t)+y_j^*)\sqrt{(r'(t))^2+(s'(t))^2}
&=\sum_{(m,n)\in \ind_{M}} \frac{g^{(m,n)}}{m!n!} (r(t))^m (s(t))^n \sqrt{(r'(t))^2+(s'(t))^2}
+\bo(t^{M+1})\\
&=\sum_{p=0}^{M}\left(\sum_{(m,n)\in \ind_M} \frac{g^{(m,n)}}{m!n!} \tilde{r}_{m,n,p}\right) t^p+\bo(t^{M+1}),
\end{align*}
as $t\to 0$,
where the constants $\tilde{r}_{m,n,p}$ for $p=0,\ldots,M$ are uniquely determined by
\[
(r(t))^m (s(t))^n\sqrt{(r'(t))^2+(s'(t))^2} -\sum_{p=0}^{M} \tilde{r}_{m,n,p} t^p=\bo(t^{M+1}),\qquad t\to 0.
\]
Consequently, \eqref{interface:flux} implies that for all
$p=0,\ldots,M$,
\be \label{interface:flux:0}
\sum_{(m,n)\in \ind_{M+1}^1}
U^{(m,n)}\tilde{g}_{m,n,p}
=
\sum_{(m,n)\in \ind_{M+1}^1}
\left(u_+^{(m,n)}-u_-^{(m,n)}\right) \tilde{g}_{m,n,p}=G_p,\qquad p=0,\ldots,M,
\ee
where
\[
G_p:=\sum_{(m,n)\in \ind_{M-1}} \left(f_-^{(m,n)}-f_+^{(m,n)}\right)
\tilde{h}_{m,n,p}
+\sum_{(m,n)\in \ind_{M}}
\frac{g^{(m,n)}}{m!n!} \tilde{r}_{m,n,p}.
\]
Note that $\tilde{g}_{0,0,0}=0$ and $\tilde{g}_{m,n,p}=0$ for all $0\le p< m+n-1$. We observe that the identities in \eqref{interface:flux:0} can be equivalently rewritten as
\be \label{interface:flux:1}
U^{(0,p)}\tilde{g}_{0,p,p-1}
+U^{(1,p-1)} \tilde{g}_{1,p-1,p-1}
=G_{p-1}-
\sum_{(m,n)\in \ind_{M+1}^1, m+n<p}
U^{(m,n)}\tilde{g}_{m,n,p-1},\quad p=1,\ldots,M+1.
\ee
Using our assumption $(r'(0))^2+(s'(0))^2>0$ in \eqref{parametric},
we now claim that
\be \label{nonzero}
g_{0,p,p}\tilde{g}_{1,p-1,p-1}- g_{1,p-1,p}
\tilde{g}_{0,p,p-1}>0,\qquad \forall\; p=1,\ldots,M.
\ee
%
Since the polynomial $G_{m,n}$ in \eqref{Gmn} is a homogeneous polynomial of degree $m+n$, we observe
\be \label{gtg1}
g_{m,n,m+n}= G_{m,n}(r'(0),s'(0)), \qquad (m,n)\in \ind_{M+1}^1.
\ee
From the definition of $G_{m,n}(x,y)$ in \eqref{Gmn}, we particularly have
\be \label{gmnp1}
g_{0,p,p}=\sum_{\ell=0}^{\lfloor \frac{p}{2}\rfloor}
(-1)^\ell \frac{(r'(0))^{2\ell} (s'(0))^{p-2\ell}}{(2\ell)!(p-2\ell)!} \qquad \mbox{and}  \qquad
g_{1,p-1,p}=\sum_{\ell=0}^{\lfloor \frac{p-1}{2}\rfloor}
(-1)^\ell \frac{(r'(0))^{1+2\ell} (s'(0))^{p-1-2\ell}}{(1+2\ell)!(p-1-2\ell)!}.
\ee
Clearly,
\be \label{formulforp1}
\Big\lfloor\frac{p}{2}\Big\rfloor=\begin{cases}
	\lfloor\frac{p-1}{2}\rfloor+1, &\text{if $p$ is even},\\
	\lfloor\frac{p-1}{2}\rfloor, &\text{if $p$ is odd},
\end{cases}
\qquad
\mbox{and} \qquad 2\Big\lfloor\frac{p-1}{2}\Big\rfloor+1=p, \text{ if $p$ is odd}.
\ee
Similarly, we also have
\be \label{gtg2}
\tilde{g}_{m,n,m+n-1}= \nabla G_{m,n}(r'(0),s'(0))\cdot (s'(0),-r'(0)),\qquad (m,n)\in \ind_{M+1}^1.
\ee
From the definition of $G_{m,n}(x,y)$ in \eqref{Gmn}, we deduce that
\be\label{gtildemn}
\begin{split}
	\tilde{g}_{0,p,p-1}&= \nabla G_{0,p}(r'(0),s'(0))\cdot (s'(0),-r'(0))\\
	&=\sum_{\ell=1}^{ \lfloor\frac{p}{2}\rfloor}(-1)^\ell \frac{(r'(0))^{2\ell-1} (s'(0))^{p+1-2\ell}}{(2\ell-1)!(p-2\ell)!}
	 -\sum_{\ell=0}^{\lfloor\frac{p-1}{2}\rfloor}(-1)^\ell \frac{(r'(0))^{2\ell+1} (s'(0))^{p-2\ell-1}}{(2\ell)!(p-2\ell-1)!}\\
	 &=-\sum_{\ell=0}^{\lfloor\frac{p}{2}\rfloor-1}(-1)^{\ell} \frac{(r'(0))^{2\ell+1} (s'(0))^{p-2\ell-1}}{(2\ell+1)!(p-2\ell-2)!}
	 -\sum_{\ell=0}^{\lfloor\frac{p-1}{2}\rfloor}(-1)^\ell \frac{(r'(0))^{2\ell+1} (s'(0))^{p-2\ell-1}}{(2\ell)!(p-2\ell-1)!}.
\end{split}
\ee
By \eqref{gmnp1}, \eqref{formulforp1} and \eqref{gtildemn}, we conclude that
\be\label{tildeggg1}
\tilde{g}_{0,p,p-1}=-p\sum_{\ell=0}^{\lfloor \frac{p-1}{2}\rfloor}
(-1)^\ell \frac{(r'(0))^{2\ell+1} (s'(0))^{p-2\ell-1}}{(2\ell+1)!(p-2\ell-1)!}
=-pg_{1,p-1,p}.
\ee
Similarly,
\be\label{gtildemn2}
\begin{split}
	\tilde{g}_{1,p-1,p-1}&= \nabla G_{1,p-1}(r'(0),s'(0))\cdot (s'(0),-r'(0))\\
	&=\sum_{\ell=0}^{ \lfloor\frac{p-1}{2}\rfloor}(-1)^\ell \frac{(r'(0))^{2\ell} (s'(0))^{p-2\ell}}{(2\ell)!(p-1-2\ell)!}-
\sum_{\ell=0}^{ \lfloor\frac{p}{2}\rfloor-1}(-1)^\ell \frac{(r'(0))^{2\ell+2} (s'(0))^{p-2\ell-2}}{(2\ell+1)!(p-2\ell-2)!}\\
	&=\sum_{\ell=0}^{ \lfloor\frac{p-1}{2}\rfloor}(-1)^\ell \frac{(r'(0))^{2\ell} (s'(0))^{p-2\ell}}{(2\ell)!(p-1-2\ell)!}+
\sum_{\ell=1}^{ \lfloor\frac{p}{2}\rfloor}(-1)^{\ell} \frac{(r'(0))^{2\ell} (s'(0))^{p-2\ell}}{(2\ell-1)!(p-2\ell)!}.
\end{split}
\ee
By \eqref{gmnp1}, \eqref{formulforp1} and \eqref{gtildemn2}, we deduce that
\be\label{tildeggg2}
\tilde{g}_{1,p-1,p-1}=p\sum_{\ell=0}^{ \lfloor\frac{p}{2}\rfloor}(-1)^{\ell} \frac{(r'(0))^{2\ell} (s'(0))^{p-2\ell}}{(2\ell)!(p-2\ell)!}\\
=pg_{0,p,p}.
\ee
By \eqref{tildeggg1} and  \eqref{tildeggg2},
\be \label{detg}
g_{0,p,p}\tilde{g}_{1,p-1,p-1}- g_{1,p-1,p}
\tilde{g}_{0,p,p-1}=p(g_{0,p,p})^2+p(g_{1,p-1,p})^2,\qquad \forall p=1,\ldots,M+1.
\ee
Let
\be	\label{WWq1} W:=(p!)^2(g_{0,p,p})^2+(p!)^2(g_{1,p-1,p})^2, \quad a:=(r'(0))^2 \quad \mbox{and} \quad b:=(s'(0))^2.
\ee
Then
\be\label{WWq2}
\begin{split}
	W&=\Bigg( \sum_{\ell=0}^{\lfloor \frac{p}{2}\rfloor}(-1)^\ell  {p\choose 2\ell} a^{\ell} b^{p/2-\ell}\Bigg)^2
	+\Bigg( \sum_{\ell=0}^{\lfloor \frac{p-1}{2}\rfloor}(-1)^\ell  {p\choose 2\ell+1}  a^{1/2+\ell} b^{(p-1)/2-\ell}\Bigg)^2\\
	&= \sum_{i=0}^{\lfloor \frac{p}{2}\rfloor} \sum_{j=0}^{\lfloor \frac{p}{2}\rfloor} (-1)^{i+j}  {p\choose 2i}{p\choose 2j} a^{i+j} b^{p-i-j}+\sum_{i=0}^{\lfloor \frac{p-1}{2}\rfloor}\sum_{j=0}^{\lfloor \frac{p-1}{2}\rfloor}(-1)^{i+j}  {p\choose 2i+1}{p\choose 2j+1}  a^{1+i+j} b^{p-1-i-j}\\
	&= \sum_{\ell=0}^{2\lfloor \frac{p}{2}\rfloor} \sum_{i=\max(0,\ell-\lfloor \frac{p}{2}\rfloor)}^{\min(\ell,\lfloor \frac{p}{2}\rfloor)} (-1)^{\ell}  {p\choose 2i}{p\choose 2(\ell-i)} a^{\ell} b^{p-\ell}\\
	&\qquad -\sum_{\ell=0}^{2\lfloor \frac{p-1}{2}\rfloor+1}\sum_{i=\max(0,\ell-\lfloor \frac{p+1}{2}\rfloor)}^{\min(\ell-1,\lfloor \frac{p-1}{2}\rfloor)}(-1)^{\ell}  {p\choose 2i+1}{p\choose 2(\ell-i)-1}  a^{\ell} b^{p-\ell}.
\end{split}
\ee
Let us consider the first case: $p$ is even and $\ell \le \lfloor \frac{p}{2}\rfloor$. Then
\be
\begin{split}\label{WWq3}
	&\sum_{i=\max(0,\ell-\lfloor \frac{p}{2}\rfloor)}^{\min(\ell,\lfloor \frac{p}{2}\rfloor)} (-1)^{\ell}  {p\choose 2i}{p\choose 2(\ell-i)} -\sum_{i=\max(0,\ell-\lfloor \frac{p+1}{2}\rfloor)}^{\min(\ell-1,\lfloor \frac{p-1}{2}\rfloor)}(-1)^{\ell}  {p\choose 2i+1}{p\choose 2(\ell-i)-1} \\
	&=\sum_{i=0}^{\ell} (-1)^{\ell}  {p\choose 2i}{p\choose 2(\ell-i)} -\sum_{i=0}^{\ell-1}(-1)^{\ell}  {p\choose 2i+1}{p\choose 2(\ell-i)-1} \\
	&=(-1)^{\ell} \sum_{i=0,2,4}^{2\ell}  {p\choose i}{p\choose 2\ell-i} -(-1)^{\ell}\sum_{i=1,3,5}^{2\ell-1}  {p\choose i}{p\choose 2\ell-i}=(-1)^{\ell}\sum_{i=0}^{2\ell} (-1)^{i}  {p\choose i}{p\choose 2\ell-i}\\
    &=(-1)^{\ell}\sum_{i=0}^{2\ell} \mbox{coeff}((1-x)^{p},x^{i})\mbox{coeff}((1+x)^{p},x^{2\ell-i})=(-1)^{\ell}\mbox{coeff}((1-x^2)^{p},x^{2\ell})={p\choose \ell},
\end{split}
\ee
where the coeff$(f(x), x^n)$ function extracts the coefficient of $x^n$ in the polynomial $f(x)$.
Similarly, we can prove that other cases can obtain the same result. By \eqref{WWq1}, \eqref{WWq2} and \eqref{WWq3},
\be\label{WWq4}
	W=\sum_{\ell=0}^{p} {p\choose \ell}a^{\ell}b^{p-\ell}=(a+b)^p=\Big((r'(0))^2+(s'(0))^2\Big)^p.
\ee
According to \eqref{detg}, \eqref{WWq1}, \eqref{WWq4} and \eqref{parametric},
\be
g_{0,p,p}\tilde{g}_{1,p-1,p-1}- g_{1,p-1,p}
\tilde{g}_{0,p,p-1} > 0,\qquad \forall p=1,\ldots,M+1.
\ee
%
Consequently, the associated $2\times 2$ coefficient matrix in the linear system in \eqref{interface:u:1} and \eqref{interface:flux:1} is invertible and its inverse is given by
\[
Q_p:=\frac{1}{g_{0,p,p}\tilde{g}_{1,p-1,p-1}- g_{1,p-1,p}
\tilde{g}_{0,p,p-1}}
\left[ \begin{matrix} \tilde{g}_{1,p-1,p-1}
&-g_{1,p-1,p}\\
-\tilde{g}_{0,p,p-1} &g_{0,p,p}\end{matrix}\right].
\]
Hence, the linear equations in \eqref{interface:u:1} and \eqref{interface:flux:1} must have a unique solution $\{U^{(0,p)}, U^{(1,p-1)}\}_{p=1,\ldots M}$,
which can be recursively computed
from $p=1$ to $p=M$ by $U^{(0,0)}=g_1^{(0,0)}$ due to \eqref{interface:u:0} and
\be \label{transmissioncoef}
\left[ \begin{matrix}
U^{(0,p)}\\
U^{(1,p-1)}\end{matrix}\right]
=Q_p
\left[ \begin{matrix}
F_p\\
G_{p-1}
\end{matrix}\right]
-\sum_{n=1}^{p-1}
Q_p \left[
\begin{matrix}U^{(0,n)}g_{0,n,p}+U^{(1,n-1)}g_{1,n-1,p}\\
U^{(0,n)}\tilde{g}_{0,n,p-1}
+U^{(1,n-1)}\tilde{g}_{1,n-1,p-1}
\end{matrix}\right],\qquad p=1,\ldots,M+1.
\ee
Note that for $p=1$, the above summation $\sum_{n=1}^{p-1}$ is empty.

If the normal vector $\nv$ in \eqref{nvector} gives the direction from $\Op$ to $\Om$, then we only need to add a negative sign to all $\tilde{r}_{m,n,p}$.
Since $U^{(m,n)}=u_+^{(m,n)}-u_-^{(m,n)}$,
the identities in \eqref{transmissioncoef} and \eqref{interface:u:0} prove all the claims.
\end{proof}

\end{document}